\documentclass[11pt,a4paper,reqno, draft]{amsart}
\usepackage{amsmath, amssymb, latexsym, enumerate,  graphicx,tikz, stmaryrd, eufrak,enumitem,MnSymbol}

\usepackage{thmtools}

\usepackage[pdftex]{hyperref}
\usepackage{cleveref}
\usepackage{bbm}
\usepackage{cases}
\usepackage{array}
\usepackage{tikz-cd}
\usepackage[all]{xy}

\usepackage[hmarginratio={1:1},vmarginratio={1:1},lmargin=80.0pt,tmargin=90.0pt]{geometry}

\usepackage{tikz}
\tikzset{anchorbase/.style={baseline={([yshift=-0.5ex]current bounding box.center)}}}
\usetikzlibrary{decorations.markings}
\usetikzlibrary{decorations.pathreplacing}
\usetikzlibrary{arrows,shapes,positioning,backgrounds}
\tikzstyle directed=[postaction={decorate,decoration={markings,
    mark=at position #1 with {\arrow{>}}}}]
\tikzstyle rdirected=[postaction={decorate,decoration={markings,
    mark=at position #1 with {\arrow{<}}}}]
    
\usepackage{graphicx}
\usepackage[vcentermath]{youngtab}
\usepackage{nicefrac}

\setcounter{tocdepth}{1}

 \newlength{\baseunit}               
 \newcount{\numlines}                
 \setlength{\baseunit}{0.05ex}




\newtheorem{theorem}[subsubsection]{Theorem}
\newtheorem{lemma}[theorem]{Lemma}
\newtheorem{prop}[theorem]{Proposition}
\newtheorem{corollary}[subsubsection]{Corollary}

\theoremstyle{definition}
\newtheorem{definition}[subsubsection]{Definition}

\newtheorem{remark}[theorem]{Remark}

\newtheorem{example}[subsubsection]{Example}

\newcommand{\bA}{\mathbf{A}}
\newcommand{\bB}{\mathbf{B}}
\newcommand{\bC}{\mathbf{C}}
\newcommand{\bD}{\mathbf{D}}
\newcommand{\bI}{\mathbf{I}}
\newcommand{\bJ}{\mathbf{J}}

\newcommand{\cV}{\mathcal{V}}

\newcommand{\Set}{\mathsf{Set}}

\newcommand{\Rep}{\mathsf{Rep}}

\newcommand{\tto}{\twoheadrightarrow}
\newcommand{\cO}{\mathcal{O}}

\newcommand{\mN}{\mathbb{N}}
\newcommand{\mZ}{\mathbb{Z}}
\newcommand{\mC}{\mathbb{C}}

\newcommand{\End}{\mathrm{End}}
\newcommand{\Ob}{\mathrm{Ob}}
\newcommand{\Ext}{\mathrm{Ext}}
\newcommand{\colim}{\mathrm{colim}}
\newcommand{\Hom}{\mathrm{Hom}}

\newcommand{\op}{\mathrm{op}}
\newcommand{\Ind}{\mathrm{Ind}}

\newcommand{\Pro}{\mathrm{Pro}}

\begin{document}
\title[Homological properties of highest weight categories]{Some homological properties of ind-completions and highest weight categories}
\author{Kevin Coulembier}
\address{K.C.: School of Mathematics and Statistics, University of Sydney, NSW 2006, Australia}
\email{kevin.coulembier@sydney.edu.au}


\keywords{extension fullness, highest weight categories, ind-completions, modular representation theory}
\subjclass[2010]{18G15, 16G99}

\begin{abstract}
We demonstrate equivalence between two definitions of lower finite highest weight categories. We also show that, in the presence of a duality, a lower finite highest weight structure on a category is unique. Finally, we give a proof for the known fact that any abelian category is extension full in its ind-completion.
\end{abstract}

\maketitle


\section*{Introduction}
In \cite{blocks} it was proved that, when the category of finite dimensional modules of a finite dimensional associative algebra admits a simple preserving duality, it is a highest weight category in at most one way (up to equivalence). This provided a crucial tool for the classification of blocks up to equivalence in category $\cO$. Note that on the other extreme we have hereditary algebras, for which the module category admits many highest weight structures, see \cite[Theorem~1]{DR1}.

Most highest weight categories in nature exhibit some finiteness. We say a highest weight category is finite if it is of the above module category type.
In \cite{BS} a comprehensive study of 3 types of `semi infinite' highest weight categories is undertaken: upper finite, essentially finite and lower finite highest weight categories. It is observed in \cite{blocks} that the uniqueness result of the finite case extends canonically to the upper finite case, but is false for the essentially finite case.

This paper is motivated by the remaining question, left open in \cite{blocks}, of whether lower finite highest weight categories with a simple preserving duality enjoy uniqueness of highest weight structure. The proof in \cite{blocks} clearly does not extend to this setting. However, we provide an alternative proof which is applicable to lower finite categories. The crucial idea is to obtain a criterion for when a standard object is simple, which is `Koszul dual' to the criterion in \cite{blocks} for which standard objects are projective. Once this is observed, the proof purely relies on typical highest weight arguments. As an example, it follows that the category of algebraic representations of a reductive group has precisely one highest weight structure.

Motivated by the above, we prove that the definition of lower finite highest weight categories in \cite{BS} is equivalent to the definition in \cite{RW}. Since we need to work partially in the ind-completion we also provide a proof that an abelian category is extension full in its ind-completion. This fact is known and included in a more general observation about the derived category in \cite[Chapter~15]{KS}.

Finally, we address the following question, asked to us by Pramod Achar. Can the ind-completion of a lower finite highest weight category with simple preserving duality be a highest weight category in the sense of \cite{CPS} (that is, without any finiteness assumption), in any other way than via its (unique) structure of a lower finite highest weight category? As we show, the answer is `no'.

The paper is organised as follows. In Section~\ref{Prel} we recall some background. In Section~\ref{SecExt3} we obtain a simple criterion on an abelian category with abelian subcategory, under which all extension groups from any object in the big category to any object in the small category are colimits of extension groups in the small category. We show that this applies to ind-completions, which in particular implies extension fullness. In Section~\ref{SecDef} we compare the definitions of \cite{BS, RW}. We exploit this to derive all properties of such categories that we will need, in a self-contained way. In Section~\ref{SecUniq} we prove the uniqueness of highest weight structure. In Section~\ref{SecExam} we discuss some examples.

\section{Preliminaries}\label{Prel}
We set $\mN=\{0,1,2,\ldots\}$.

\subsection{Abelian categories}

Let $\bA$ be an abelian category. Since we take the convention that any algebraic structure is a set (and not a proper class), abelian categories are automatically assumed locally small.

\begin{definition}\label{DefAbSub}
\begin{enumerate}
\item An {\bf abelian subcategory} of $\bA$ is a full replete subcategory $\bB$ which is again abelian and such that the inclusion $\bB\to\bA$ is exact. Equivalently, an abelian subcategory is a full additive subcategory  $\bB$ which contains all kernels and cokernels in $\bA$ of morphisms in $\bB$.
\item A full additive subcategory $\bB$ of $\bA$ is {\bf topologising} if it is closed under taking subquotients in $\bA$. In particular a topologising subcategory is an abelian subcategory.
\item A {\bf Serre subcategory} $\bB$ of $\bA$ is a full subcategory such that  for every short exact sequence $0\to X\to Y\to Z\to 0$ in $\bA$, we have $Y\in\bB$ if and only if $X,Z\in \bB$. In particular Serre subcategories are topologising.
\end{enumerate}
\end{definition}

\subsubsection{Extensions}\label{SecExt1} We introduce Yoneda extensions following \cite[III.3.2]{Verdier}.
Let $\bA$ be an abelian category, $M,N\in\bA$ and $d\in\mathbb{Z}_{> 0}$. We denote by $E^d(X,Y)$ the class of exact sequences
\begin{equation}\label{eqX}
\xymatrix{ 
\mathcal{X}:\;0\ar[r]&N\ar[r]&X^1\ar[r]&X^2\ar[r]&\dots\ar[r]&X^d\ar[r]&M\ar[r]&0,
}
\end{equation}
in $\bA$. Let $\sim$ be the binary relation on $E^d(M,N)$, such that $\mathcal{X}\sim\mathcal{Y}$ if there exists a commutative diagram in $\bA$ with exact rows
\begin{displaymath}
\xymatrix{ 
\mathcal{X}:&0\ar[r]&N\ar[r]\ar@{=}[d]&X^1\ar[r]&X^2\ar[r]&\dots\ar[r]&X^d\ar[r]&M\ar[r]\ar@{=}[d]&0\\
&0\ar[r]&N\ar[r]\ar@{=}[d]&Z^1\ar[r]\ar[d]\ar[u]&Z^2\ar[r]\ar[d]\ar[u]&\dots\ar[r]&Z^d\ar[r]\ar[d]\ar[u]&M\ar[r]\ar@{=}[d]&0\\
\mathcal{Y}:&0\ar[r]&N\ar[r]&Y^1\ar[r]&Y^2\ar[r]&\dots\ar[r]&Y^d\ar[r]&M\ar[r]&0.
}
\end{displaymath}

By \cite[Proposition 3.2.2]{Verdier}, the relation $\sim$ is an equivalence relation on $E^d(M,N)$ and we define $\Ext^d_{\bA}(M,N)$ as $E^d(M,N)/\hspace{-1.2mm}\sim$. If $\bA$ is essentially small or has enough projective or injective objects, then $\Ext^d_{\bA}(M,N)$ is a set. For every category we will encounter, at least one of these conditions will be satisfied, so ignore the fact that $\Ext^d_{\bA}(M,N)$ might be a proper class.

Then $\mathrm{Ext}_{\bA}^d(M,N)$ has the natural structure of an abelian group via the Baer sum and is functorial in both variables. For example, under a morphism $f:N\to N'$ the extension represented by \eqref{eqX} is sent to the element of $\Ext^d_{\bA}(M,N')$ represented by
$$\xymatrix{ 
0\ar[r]&N'\ar[r]&X^1\sqcup_{N}N'\ar[r]&X^2\ar[r]&\dots\ar[r]&X^d\ar[r]&M\ar[r]&0.
}$$

\subsubsection{Extension fullness}\label{SecExt} Let $\bA$ be an abelian category with abelian subcategory $\bB$.
As the inclusion functor $\boldsymbol{\iota}:\bB\to \bA$ is exact, for every
$M,N\in\bB$ and $d\in\mN$, it induces a group homomorphism 
\begin{equation}\label{DefIota}
\iota^d_{M,N}:\mathrm{Ext}_{\bB}^d(M,N)\to \mathrm{Ext}_{\bA}^d(M,N).
\end{equation}
 In general, these homomorphisms $\iota^d_{M,N}$ are neither injective nor surjective. Of course, $\iota^0_{M,N}$ is always bijective and $\iota^1_{M,N}$ is always injective.

We say that $\bB$ is {\bf extension full} in $\bA$ provided that the homomorphisms $\iota^d_{M,N}$ are bijective for all
$d\in\mN$ and all $M,N\in\bB$.



\subsubsection{Serre quotient categories}

We recall some results from \cite[Chapitre~III]{Gabriel}. For an abelian category $\bA$ with Serre subcategory~$\bB\subset\bA$,
the Serre quotient category~$\bA/\bB$ is defined by setting~$\Ob(\bA/\bB):=\Ob \bA$, and for $X,Y\in\bA$
\begin{equation}\label{HomQuo}\Hom_{\bA/\bB}(X,Y)\;:=\;\varinjlim \Hom_{\bA}(X',Y/Y'),\end{equation}
where~$X'$, resp. $Y'$, runs over all subobjects in~$\bA$ (ordered by inclusion) of~$X$, resp. $Y$, such that~$X/X'\in\bB\ni Y'$. For the definition of the composition of two morphisms in~$\bA/\bB$, we refer to \cite{Gabriel}.

By \cite[Proposition~III.1.1]{Gabriel}, the category~$\bA/\bB$ is abelian and we have an exact functor~$\pi:\bA\to\bA/\bB$, which is the identity on objects and is given on morphisms by the canonical morphism from
$\Hom_{\bA}(X,Y)$ to $\varinjlim \Hom_{\bA}(X',Y/Y').$

\subsection{Ind-completion} We refer to \cite{KS} for a complete treatment on ind-completions.

\subsubsection{} For a locally small category $\bC$, we denote its ind-completion by $\Ind\bC$. This is the full subcategory of $[\bC^{\op},\Set]$ of objects isomorphic to `formal colimits' of functors $F:\bI\to\bC$, from small filtered categories $\bI$. The formal colimit of $F$ is the colimit of the composition $\bI{\to}\bC{\to}[\bC^{\op},\Set]$ with the Yoneda embedding. For $F:\bI\to\bC$ and $G:\bJ\to\bC$, it thus follows from the Yoneda lemma that in $\Ind\bC$:
\begin{equation}\label{eqlimlim}\Hom(\colim F,\colim G)\;=\; \mathrm{lim}_{i\in \bI} \colim_{j\in\bJ} \Hom_{\bC}(F(i),G(j)).\end{equation}
In particular, $\bC$ is the full subcategory of compact objects in $\Ind\bC$.

In case we consider direct limits or inverse limits (colimits or limits of functors from a directed poset $I$ or its inverse, seen as a category), we will use the notation $\varinjlim_{i\in I}$ or $\varprojlim_{i\in I}$.


\subsubsection{}\label{indab}
If $\bC$ is abelian, it is well-known that $\Ind\bC$ is also abelian and that $\bC$ is an abelian subcategory. For two functors $F,G:\bI\to\bC$ and a natural transformation $\alpha: F\Rightarrow G$, the (co)kernel of $\colim(\alpha)$ is given by the colimit of the (co)kernel of $\alpha$. Moreover, any morphism in $\Ind\bC$ can be written as such a $\colim(\alpha)$.

In case $\bC$ is also essentially small, the category $\Ind\bC$ is a Grothendieck category, since the isomorphism classes of objects in $\bC$ constitute a set of generators.



\subsection{Finite length categories}\label{SFL}
Fix a field $k$. 

\subsubsection{} A {\bf finite length category} is an essentially small $k$-linear abelian category for which all objects have finite length and all morphism spaces are finite dimensional. Any Serre quotient or subcategory of a finite length category is again of finite length.

A {\bf finite category} is a finite length category $\bA$ with an object $G$ such that $\bA$ is equal to its topologising subcategory generated by $G$. This is the same as saying that $\bA$ is equivalent to the category of finite dimensional modules of a finite dimensional associative algebra, or that it is a finite length category with finitely many simple objects and enough projective (or yet equivalently: injective) objects, see \cite[2.14-2.17]{Del90}.

\begin{lemma}[Takeuchi]\label{LemTak}
A $k$-linear abelian category $\bA$ is a finite length category if and only if there exists a coalgebra $C$ over $k$ such that $\bA$ is equivalent to the category of finite dimensional right $C$-modules {\rm{comod-}}$C$. Furthermore, the category of all right $C$-modules {\rm{Comod-}}$C$ is then equivalent to $\Ind\bA$.
\end{lemma}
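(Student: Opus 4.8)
The plan is to prove Takeuchi's theorem in two halves, matching the "if and only if" and then the supplementary claim about ind-completions. The key structural input is the fundamental theorem of coalgebras: every comodule is the filtered union of its finite-dimensional subcomodules, so that $\mathrm{Comod}\text{-}C$ is a locally finite Grothendieck category whose finite-dimensional objects $\mathrm{comod}\text{-}C$ form a finite length abelian subcategory. I would take this coalgebra-side fact as standard.

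\textbf{The easy direction.} First I would verify that for any coalgebra $C$, the category $\mathrm{comod}\text{-}C$ is a finite length $k$-linear abelian category. It is essentially small (a finite-dimensional comodule is determined up to isomorphism by finite linear-algebraic data), all morphism spaces are finite dimensional, and finite-dimensional comodules automatically have finite length since length is bounded by dimension. This handles the "if" direction immediately.

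\textbf{The hard direction: reconstructing $C$.} Given a finite length category $\bA$, I want to produce a coalgebra $C$ with $\mathrm{comod}\text{-}C\simeq\bA$. The natural approach is coend reconstruction. Fix the forgetful-type data by choosing, for each object, its underlying finite-dimensional vector space via a faithful exact $k$-linear functor $\omega:\bA\to\Vecc_k^{\mathrm{fd}}$; such an $\omega$ exists because a finite length $k$-linear category embeds into finite-dimensional vector spaces (for instance, send $X$ to $\bigoplus_S \Hom(P_S, X)$ summed over the finitely many simples in the finite subcategory generated by $X$, or more cleanly invoke that each object sits in a finite subcategory $\simeq \mathrm{mod}\text{-}B$ for a finite-dimensional algebra $B$ and use its standard fiber functor). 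I would then define the reconstructed coalgebra as the coend
\begin{equation*}
C \;=\; \int^{X\in\bA} \omega(X)^{*}\otimes_k \omega(X),
\end{equation*}
computed as a filtered colimit over the finite subcategories of $\bA$. On each finite subcategory $\bA_0\simeq\mathrm{mod}\text{-}B$ the coend returns the finite-dimensional coalgebra $B^{*}$ dual to the finite-dimensional algebra, and Tannaka–Krein reconstruction gives $\bA_0\simeq\mathrm{comod}\text{-}B^{*}$. Passing to the filtered colimit of these finite-dimensional coalgebras $B^{*}$ along the directed system of finite subcategories of $\bA$ yields a coalgebra $C=\varinjlim B^{*}$, and the compatible equivalences assemble into an equivalence $\bA\simeq\mathrm{comod}\text{-}C$.

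\textbf{The ind-completion claim.} For the final assertion I would argue that $\Ind\bA\simeq\mathrm{Comod}\text{-}C$. Since $\bA\simeq\mathrm{comod}\text{-}C$ is essentially small, $\Ind\bA$ is by definition the free filtered-colimit completion, a Grothendieck category in which $\bA$ is the subcategory of compact objects, as recalled in \ref{indab}. On the other hand $\mathrm{Comod}\text{-}C$ is a Grothendieck category in which, by the fundamental theorem of coalgebras, every object is a filtered colimit of its finite-dimensional subcomodules, so the finite-dimensional comodules are a generating family of compact objects and $\mathrm{Comod}\text{-}C$ is itself the filtered-colimit completion of $\mathrm{comod}\text{-}C$. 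By the universal property of ind-completion, the inclusion $\mathrm{comod}\text{-}C\hookrightarrow\mathrm{Comod}\text{-}C$ extends uniquely to a colimit-preserving functor $\Ind(\mathrm{comod}\text{-}C)\to\mathrm{Comod}\text{-}C$, which is an equivalence precisely because every comodule is a filtered colimit of finite-dimensional ones and these are compact. Composing with $\bA\simeq\mathrm{comod}\text{-}C$ gives $\Ind\bA\simeq\mathrm{Comod}\text{-}C$.

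\textbf{Main obstacle.} The genuine work is the hard direction, and within it the production of a well-behaved fiber functor $\omega$ together with the coherence of the reconstruction across the directed poset of finite subcategories; the delicate point is checking that the finite-dimensional coalgebras $B^{*}$ form a genuine filtered system of coalgebra maps (not merely vector-space maps) whose colimit recovers the comodule category, i.e.\ that comodule structures glue correctly in the colimit. Everything else reduces to the fundamental theorem of coalgebras and the standard finite-dimensional Tannaka–Krein duality.
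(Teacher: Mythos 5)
Your overall strategy is a genuinely different route from the paper's. The paper does not reconstruct $C$ directly: it passes immediately to $\Ind\bA$, verifies that this Grothendieck category is ``locally finite'' and ``of finite type'' in Takeuchi's sense, invokes Takeuchi's recognition theorem \cite[Theorem~5.1]{Tak} to get $\Ind\bA\simeq\mathrm{Comod}\text{-}C$, and only then recovers $\bA\simeq\mathrm{comod}\text{-}C$ by identifying both sides with the compact objects. So the paper proves the ind-completion claim \emph{first} and deduces the finite-dimensional statement from it, whereas you build $C$ by Tannakian/coend reconstruction over the directed system of finite subcategories and treat the ind-completion claim as a separate, essentially formal, afterthought (your argument for that last step is fine and matches \ref{indab}). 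Your route is more constructive and avoids quoting the recognition theorem, at the price of having to produce a fiber functor; the paper's route outsources exactly that work to \cite{Tak}.

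The gap in your proposal is the one you yourself flag, and it is not a routine verification but the crux of the theorem: the existence of a single faithful exact $k$-linear functor $\omega:\bA\to\Vecc_k^{\mathrm{fd}}$, or equivalently of a \emph{compatible} system of fiber functors on the finite subcategories. Your suggested formula $X\mapsto\bigoplus_S\Hom(P_S,X)$ is not functorial as written: a general finite length category has no projectives, so $P_S$ only exists inside the finite subcategory generated by $X$ (this is \cite[2.14--2.17]{Del90}, which the paper cites), and these projective covers change as $X$ grows --- for $\langle X\rangle\subset\langle Y\rangle$ one has a surjection $P_S^{\langle Y\rangle}\tto P_S^{\langle X\rangle}$ rather than an equality. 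Without fixing this, the coalgebras $B_\alpha^{*}$ are only defined up to Morita equivalence and there is no canonical system of coalgebra maps between them, so the colimit $C$ is not yet defined. The standard repair is to pass to $\Pro\bA$ and use the pro-projective covers $\varprojlim_\alpha P_S^{\alpha}$ to define $\omega(X)=\bigoplus_S\Hom_{\Pro\bA}(P_S,X)$ (a finite sum for each fixed $X$), after which your coend argument goes through; alternatively one uses that the inclusions of finite subcategories are topologising, hence correspond to subcoalgebra inclusions once one equivalence is fixed coherently. Either way this step needs to be carried out explicitly --- it is precisely the content that the paper avoids by citing Takeuchi.
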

\begin{proof}
For any coalgebra $C$, the category comod-$C$ is clearly finite length.

Now let $\bA$ be a finite length category. The category $\Ind\bA$ is `locally finite' in the sense of \cite[Definition~4.1]{Tak}. The only property to check is that for $M\in\Ind \bA$ and a directed set of subobjects $M_i\subset M$, the canonical morphism $\varinjlim M_i\to M$ is a monomorphism. 
Furthermore, $\Ind\bA$ satisfies the equivalent properties in \cite[Proposition~4.3]{Tak}, so it is `of finite type'. By \cite[Theorem~5.1]{Tak} there exists a coalgebra $C$ such that $\Ind\bA$ is equivalent to Comod-$C$. Since comod-$C$ is precisely the subcategory of compact objects in Comod-$C$, the former is equivalent to $\bA$.
\end{proof}

\section{Extension fullness in the ind-completion}
\label{SecExt3}

\subsection{General principles}
Consider an abelian category $\bA$ with abelian subcategory $\bB$. 

\subsubsection{}For each $X\in\bA$, we have the category $\bI_X$ of pairs $(X',f)$ with $X'\in \bB$ and $f:X'\to X$ a morphism in $\bA$. The morphisms in $\bI_X$ are given by morphisms in $\bB$ which yield a commutative diagram in the obvious way. In other words $\bI_X$ is a full subcategory of the slice category $\bA/X$. We have the forgetful functor $F_X:\bI_X\to\bB$, $(X',f)\mapsto X'$. Note that the fact that $\bB$ is an {\em abelian} subcategory of $\bA$ easily implies that $\bI_X$ is filtered.

\begin{theorem}\label{Thm0}
The following conditions on $(\bA,\bB)$ are equivalent.
\begin{enumerate}
\item For every epimorphism $p:A\tto B$ in $\bA$, with $B\in \bB$, there exists a morphism $f:B'\to A$ with $B'\in \bB$ such that $p\circ f$ is still an epimorphism.
\item For each $B\in \bB$ and $X\in  \bA$, the canonical homomorphism 
$$\phi_1:\,\colim\Ext^1_{\bB}(B,F_X)\;\to\;\Ext^1_{\bA}(B,X)$$
is an epimorphism.
\item For each $B\in \bB$, $X\in  \bA$ and $i\in\mN$, the canonical homomorphism 
$$\phi_i:\,\colim\Ext^i_{\bB}(B,F_X)\;\to\;\Ext^i_{\bA}(B,X)$$
is an isomorphism.
\end{enumerate}
\end{theorem}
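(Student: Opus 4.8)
The plan is to close the cycle $(1)\Rightarrow(2)\Rightarrow(1)$, note that $(3)\Rightarrow(2)$ is immediate (take $i=1$), and then prove the substantive implication $(2)\Rightarrow(3)$ by induction on $i$, using $(1)$ freely since it is now equivalent to $(2)$.

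\emph{The equivalence of $(1)$ and $(2)$.} For $(1)\Rightarrow(2)$ I would represent a class in $\Ext^1_{\bA}(B,X)$ by $0\to X\to E\xrightarrow{p}B\to 0$ and apply $(1)$ to $p$, obtaining $B'\in\bB$ and $s\colon B'\to E$ with $g:=p\circ s$ an epimorphism. Then $K:=\ker g$ lies in $\bB$ (a kernel of a morphism of $\bB$), and $s$ carries $K$ into $\ker p=X$, giving $(K,t)\in\bI_X$; the $\bB$-extension $0\to K\to B'\xrightarrow{g}B\to 0$ pushes out along $t$ to the original class by the five lemma, so the class lies in the image of $\phi_1$. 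For $(2)\Rightarrow(1)$, given an epimorphism $p\colon A\tto B$ with $B\in\bB$, I would set $X=\ker p$ and apply surjectivity of $\phi_1$ to the class of $0\to X\to A\to B\to 0$; a preimage is a $\bB$-extension $0\to X'\to E'\to B\to 0$ whose pushout along some $f\colon X'\to X$ is that class, and the resulting comparison morphism $E'\to A$ satisfies that $p$ composed with it is the epimorphism $E'\to B$. Thus $B':=E'\in\bB$ is the object required by $(1)$.

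\emph{Induction on $i$: the base and surjectivity.} The case $i=0$ is direct: $\phi_0$ is surjective because $(B,u)\in\bI_X$ realises any $u\colon B\to X$, and injective because if $f\circ g=0$ for $g\colon B\to X'$ then $f$ factors through $\coker g\in\bB$, giving a morphism in $\bI_X$ that kills $g$; both use only that $\bB$ is an \emph{abelian} subcategory. For surjectivity of $\phi_i$ with $i\ge 1$, I would split a Yoneda $i$-extension as a product $\gamma\cdot\beta$ with $\beta\in\Ext^1_{\bA}(B,W)$ and $\gamma\in\Ext^{i-1}_{\bA}(W,X)$. Surjectivity of $\phi_1$ writes $\beta=f_*\beta'$ with $\beta'\in\Ext^1_{\bB}(B,W')$, $f\colon W'\to W$, $W'\in\bB$; the standard identity $\gamma\cdot(f_*\beta')=(f^*\gamma)\cdot\beta'$ moves the class to $\Ext^{i-1}_{\bA}(W',X)$ with source in $\bB$, and surjectivity of $\phi_{i-1}$ writes $f^*\gamma=h_*\gamma'$ with $\gamma'\in\Ext^{i-1}_{\bB}(W',X')$. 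Then $\eta=h_*(\gamma'\cdot\beta')$ with $\gamma'\cdot\beta'\in\Ext^i_{\bB}(B,X')$ exhibits $\eta$ in the image of $\phi_i$.

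\emph{Injectivity, the main obstacle.} I would first record that when the target lies in $\bB$ the category $\bI_X$ has a terminal object, so there $\phi_j$ is exactly $\iota^j$; hence the induction hypothesis gives injectivity of $\iota^{i-1}$ on targets in $\bB$. Given $\xi'\in\Ext^i_{\bB}(B,X')$ with $(X',f)\in\bI_X$ and $f_*\xi'=0$, I would decompose $\xi'=\alpha'\cdot\rho'$ in $\bB$ along $\alpha'\colon 0\to X'\to D^1\xrightarrow{p}U'\to 0$ with $\rho'\in\Ext^{i-1}_{\bB}(B,U')$. Pushing out gives $\delta_{f_*\alpha'}(\rho')=0$, so the long exact sequence of $f_*\alpha'$ shows $\rho'$ lifts through $\tilde D^1:=D^1\sqcup_{X'}X\to U'$ in $\bA$; surjectivity of $\phi_{i-1}$ produces a $\bB$-lift $\sigma'$ and injectivity of $\iota^{i-1}$ on $U'\in\bB$ upgrades this to an equality $\rho'=(\pi g)_*\sigma'$ \emph{inside} $\bB$. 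Rewriting $\xi'=((\pi g)^*\alpha')\cdot\sigma'=:\beta''\cdot\sigma'$ via the same Yoneda functoriality, the key observation is that $\beta''$ dies over $\bA$: since $\pi^*(f_*\alpha')$ is the pullback of an extension along its own quotient it splits, and commuting this pullback with the pushout along $f$ gives $f_*\beta''=0$. Injectivity of $\phi_1$, applied with first argument $\tilde D'$, then forces $\beta''$ to vanish after enlarging $X'$ to some $(X'',f'')\in\bI_X$, whence $h_*\xi'=(h_*\beta'')\cdot\sigma'=0$ and $\xi'$ is zero in the colimit. I expect the real work to be exactly this injectivity step: bookkeeping the several Yoneda functorialities (pushout and pullback in the two variables commute, a morphism in the middle may be transferred across a product, and both commute with connecting maps) together with the two effaceability facts (a pushout along a monomorphism internal to the extension, and the pullback along the corresponding epimorphism, are split). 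Throughout, the fact that $\bI_X$ is filtered — so that colimits over it are exact and a class vanishes precisely after some enlargement — is what makes every manipulation of $\colim$ legitimate.
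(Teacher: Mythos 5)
Your reduction of the problem is a genuinely different route from the paper's: you dimension-shift via Yoneda products ($\xi=\gamma\cdot\beta$, the identity $\gamma\cdot(f_*\beta')=(f^*\gamma)\cdot\beta'$, and the long exact sequence of a short exact sequence in the second variable), whereas the paper works directly with the representing exact sequences, repeatedly using condition (1) to replace the terms $C^j$ by objects of $\bB$ from the right. Your treatment of $(1)\Leftrightarrow(2)$ and your surjectivity induction are correct (the key point, that the induction hypothesis for $\phi_{i-1}$ applies because $f^*\gamma$ has source $W'\in\bB$ and target $X\in\bA$, is exactly right), and modulo the standard Yoneda calculus this part is a clean alternative to the paper's iterative construction.

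However, there is a genuine gap in the injectivity step: it is circular at $i=1$. Your argument for injectivity of $\phi_i$ ends by invoking ``injectivity of $\phi_1$'' applied to $\beta''\in\Ext^1_{\bB}(V,X')$ with $f_*\beta''=0$ in $\Ext^1_{\bA}(V,X)$ --- note that the second argument here is $X\in\bA$, so this is genuinely $\phi_1$ and not the automatically injective $\iota^1$. For $i\ge 2$ this would be legitimate if injectivity of $\phi_1$ had already been established, but your induction starts at $i=0$ and for $i=1$ the argument assumes exactly what it is proving ($U'=B$, $\rho'=\mathrm{id}$, $\beta''=\xi'$, and the final step is verbatim the claim being proved). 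Condition (2) only gives \emph{surjectivity} of $\phi_1$; its injectivity is a substantive statement (a $\bB$-extension of $B$ by $X'$ that splits after pushout to $X$ must split after pushout to some intermediate $X''\in\bB$ over $X$) and requires condition (1). The paper proves it, uniformly in $i$, by taking the diagram in $\bA$ witnessing the equivalence of the two pushed-out extensions and pushing \emph{that entire diagram} into $\bB$ with the same $T\mapsto T\oplus D^i\oplus E^i$ device used for surjectivity (observations (I) and (II) in the paper's proof). To repair your argument you would need to supply an independent proof of injectivity of $\phi_1$ from condition (1); this is where the real content lies, and it is not covered by the Yoneda bookkeeping you list at the end.
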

\begin{proof}
Obviously, (3) implies (2). Now we assume that (2) is satisfied and prove (1). Suppose we are given an epimorphism $p:A\tto B$ in $\bA$, with $B\in\bB$. Set $K=\ker(p)\in\bA$ and denote by $x$ the corresponding element in $\Ext^1_{\bA}(B,K)$. By assumption (2) for $X=K$ there exists a morphism $K'\to K$ with $K'$ in $\bB$ such that $x$ is in the image of
$$\Ext^1_{\bB}(B,K')\hookrightarrow \Ext^1_{\bA}(B,K') \to  \Ext^1_{\bA}(B,K),\quad E\mapsto E\sqcup_{K'}K, $$
where we abbreviated the morphism to its action on the middle term of the short exact sequence. In particular, there exists $E\in\bB$ and a morphism $E\to E\sqcup_{K'}K\stackrel{\sim}{\to}A$ which yields an epimorphism $E\to A\to B$ as desired.

Finally, we prove that (1) implies (3).
We fix arbitrary $B\in\bB$ and $X\in\bA$. That $\phi_0$ is always an isomorphism is a standard exercise, so we can fix $i>0$.

First we prove that $\phi_i$ is surjective.
Consider an exact sequence in $\bA$
\begin{equation}\label{extseq}
\xymatrix@C=5mm{
0\ar[r]&X\ar[r]&C^1\ar[r]&C^2\ar[r]&\;\dots\;\ar[r]&C^{i-1}\ar[r]&C^i\ar[r]&B\ar[r]&0.\\
}
\end{equation}
We also set  $C^0=X$, $C^{i+1}=B $ and $C^{i+2}=0$. There exist a minimal $j\in\mN$ such that $C^k\in\bB$ for $k>j$. We intend to prove that the extension represented by \eqref{extseq} is in the image of $\phi_i$. If $j=0$ there is nothing to prove, 
so the only non-trivial cases are $0<j\le i$.

We let $Z^{k}$ denote the kernel of $C^{k}\to C^{k+1}$, for $1\le k\le i+1$. In particular $Z^{i+1}=B$. By assumption $Z^{j+1}\in\bB$. By exactness of \eqref{extseq}, we have $C^j\tto Z^{j+1}$ and hence by assumption (1) there exists $T\in\bB$ with $T\to C^j$ such that the composite $T\to C^j\to Z^{j+1}$ is still an epimorphism. We denote the kernel of the latter epimorphism by $K\in\bB$. The morphism $K\hookrightarrow T\to C^j$ thus factors as $K\to Z^j\hookrightarrow C^j$. This yields a commutative diagram with exact rows
$$\xymatrix{
0\ar[r]&Z^{j-1}\ar[r]&C^{j-1}\ar[r]& Z^{j}\ar[r]&0\\
0\ar[r]&Z^{j-1}\ar@{=}[u]\ar[r]& C^{j-1}\times_{Z^j}K\ar[r]\ar[u]&K\ar[r]\ar[u]&0.
}$$
With $P:=C^{j-1}\times_{Z^j}K$, this allows to construct a commutative diagram with exact rows
\begin{displaymath}
\xymatrix@C=5mm{
0\ar[r]&C^0\ar[r]&C^1\ar[r]&\;\dots\; \ar[r]& C^{j-2}\ar[r]& C^{j-1}\ar[r]& C^{j}\ar[r] & C^{j+1}\ar[r] &\;\dots\;  \ar[r]&C^i\ar[r]&B\ar[r]&0\\
0\ar[r]&C^0\ar@{=}[u]\ar[r]&C^1\ar[r]\ar@{=}[u]&\;\dots\; \ar[r]& C^{j-2}\ar[r]\ar@{=}[u]& P\ar[u]\ar[r]&T\ar[u]\ar[r] & C^{j+1}\ar[r] \ar@{=}[u]&\;\dots\;  \ar[r]&C^i\ar@{=}[u]\ar[r]&B\ar[r]\ar@{=}[u]&0.\\
}
\end{displaymath}
If we had $j=1$, then the above sequence is in $\bB$ and therefore $P\to C^0=X$ gives an object of $\bI_X$ such that that the original extension represented by \eqref{extseq} is in the image of $\Ext^i_{\bB}(B,P)\to\Ext^i_{\bA}(B,X)$.
On the other hand, if $j>1$, then we obtain that \eqref{extseq} is equivalent under $\sim$ of \ref{SecExt1} to a sequence in $E^i(B,X)$ where all objects in degrees strictly higher than $j-1$ belong to $\bB$ and we can iterate the above procedure. This proves that 
$\phi_i$
is surjective.

Now we prove that $\phi_i$ is injective. We start with two observations (I) and (II):

(I) Consider a morphism $f:X'\to X''$ in $\bB$. If there exists a commutative diagram in $\bB$ with exact rows
$$
\xymatrix@C=5mm{
0\ar[r]&X'\ar[r]\ar[d]^f&D^1\ar[r]\ar[d]&D^2\ar[r]\ar[d]&\dots\ar[r]&
D^{i-1}\ar[r]\ar[d]&D^i\ar[r]\ar[d]&B\ar[r]\ar@{=}[d]&0\\
0\ar[r]&X''\ar[r]&C^1\ar[r]&C^2\ar[r]&\dots\ar[r]&C^{i-1}\ar[r]&C^i\ar[r]&B\ar[r]&0
}
$$
then the homomorphism $\Ext^i_{\bB}(B,f)$ will send the extension represented by the upper row, to the extension represented by the lower row. Indeed, this is an immediate application of the universality of the pushout $D^1\sqcup_{X'}X''$. 

(II) Assume we are given a commutative diagram in $\bA$ with exact rows
$$
\xymatrix@C=5mm{
0\ar[r]&X'\ar[r]\ar[d]&D^1\ar[r]\ar[d]&D^2\ar[r]\ar[d]&\dots\ar[r]&
D^{i-1}\ar[r]\ar[d]&D^i\ar[r]\ar[d]&B\ar[r]\ar@{=}[d]&0\\
0\ar[r]&X\ar[r]&C^1\ar[r]&C^2\ar[r]&\dots\ar[r]&C^{i-1}\ar[r]&C^i\ar[r]&B\ar[r]&0\\
0\ar[r]&X''\ar[r]\ar[u]&E^1\ar[r]\ar[u]&
E^2\ar[r]\ar[u]&\dots\ar[r]&E^{i-1}\ar[r]\ar[u]&E^i\ar[r]\ar[u]&B\ar[r]\ar@{=}[u]&0
}
$$
where the upper and lower rows are in $\bB$. Then we also have a diagram as the above, except that the middle row is also in $\bB$ and the first column is a diagram $X'\to Y\leftarrow X''$ over $X$ (so a diagram in $\bI_X$), for some $Y\in\bB$ over $X$, where $X'$ and $X''$ are over $X$ via the original diagram. Indeed, this can be achieved using the exact same iterative procedure as in our proof that $\phi_i$ is surjective. The only adjustment which is required is replacing the object $T\in\bB$ as defined above by $T\oplus D^i\oplus E^i\in\bB$.

Now consider objects $X_1,X_2$ in $\bB$ over $X$ (so objects of $\bI_X$) and $a\in\Ext^i_{\bB}(B,X_1)$ and $c\in\Ext^i_{\bB}(B,X_2)$ represented by
$$0\to X_1\to A^1\to \cdots \to A^i\to B\to 0\;\mbox{ and }\;0\to X_2\to C^1\to \cdots\to C^i\to B\to 0.$$
Assume that $a,c$ are sent to the same element in $\Ext^i_{\bA}(B,X).$ By definition, this means there exists a commutative diagram in $\bA$ with exact rows
\begin{displaymath}
\xymatrix{ 
0\ar[r]&X_1\ar[r]\ar[d]&{A}^1\ar[r]\ar[d]&A^2\ar[r]\ar@{=}[d]&\dots\ar[r]&A^i\ar[r]\ar@{=}[d]&B\ar[r]\ar@{=}[d]&0\\
0\ar[r]&X\ar[r]\ar@{=}[d]&\widetilde{A}^1\ar[r]&A^2\ar[r]&\dots\ar[r]&A^i\ar[r]&B\ar[r]\ar@{=}[d]&0\\
0\ar[r]&X\ar[r]\ar@{=}[d]&E^1\ar[r]\ar[d]\ar[u]&E^2\ar[r]\ar[d]\ar[u]&\dots\ar[r]&E^i\ar[r]\ar[d]\ar[u]&B\ar[r]\ar@{=}[d]&0\\
0\ar[r]&X\ar[r]&\widetilde{C}^1\ar[r]&C^2\ar[r]&\dots\ar[r]&C^i\ar[r]&B\ar[r]\ar@{=}[d]&0\\
0\ar[r]&X_2\ar[u]\ar[r]&C^1\ar[r]\ar[u]&C^2\ar[r]\ar@{=}[u]&\dots\ar[r]&C^i\ar[r]\ar@{=}[u]&B\ar[r]&0,
}
\end{displaymath}
where $\widetilde{A}^1$ and $\widetilde{C}^1$ are pushouts. As in the proof of surjectivity of $\phi_i$, we can replace the middle row by one contained in $\bB$, and the term $X$ is replaced by an object in $\bB$ over $X$ by construction. Then we can use observation (II) to replace the second and forth row by rows contained in $\bB$. The resulting diagram is thus entirely contained in $\bB$ and all morphisms in the first column, say
$$X_1\to X_4\leftarrow X_3\to X_5\leftarrow X_2$$ are canonically over $X$. Observation (I) implies that the homomorphisms 
$$\Ext^1_{\bB}(B,X_1)\;\to\;\Ext^1_{\bB}(B,X_4\sqcup_{X_3}X_5)\;\leftarrow\; \Ext^1_{\bB}(B,X_2),$$
induced by the canonical morphisms $X_i\to X_4\sqcup_{X_3}X_5$ over $X$, for $i\in\{1,2\}$ 
 will send the extensions $a,c$ to the same element in the middle group. This shows that $\phi_i$ is injective.
\end{proof}

The following theorem is a generalisation of \cite[Proposition~5]{Sigma}.

\begin{theorem}\label{Thm1}
 If for every epimorphism $p:A\tto B$ in $\bA$, with $B\in \bB$, there exists a morphism $f:B'\to A$ with $B'\in \bB$ such that $p\circ f$ is still an epimorphism, then $\bB$ is extension full in $\bA$.
\end{theorem}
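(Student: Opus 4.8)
The plan is to observe that the hypothesis is verbatim condition (1) of Theorem~\ref{Thm0}, so the whole statement becomes a clean specialization of that theorem. First I would invoke Theorem~\ref{Thm0} to pass from condition (1) to condition (3): for every $B\in\bB$, every $X\in\bA$, and every $i\in\mN$, the canonical map
$$\phi_i:\,\colim\Ext^i_{\bB}(B,F_X)\;\to\;\Ext^i_{\bA}(B,X)$$
is an isomorphism. All the substantive work has then already been done; what remains is to recognize the extension-fullness maps $\iota^i_{M,N}$ of \eqref{DefIota} inside these $\phi_i$.

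Next I would restrict attention to the case $X=N$ with $N\in\bB$. The key point is that when $N$ already lies in $\bB$, the filtered category $\bI_N$ acquires a terminal object, namely the pair $(N,\mathrm{id}_N)$: any $(N',f)\in\bI_N$ admits a morphism into $(N,\mathrm{id}_N)$, given by $f:N'\to N$ in $\bB$, and this is the unique such morphism. Since a colimit of a functor whose index category has a terminal object is computed at that object, the structure map of the $(N,\mathrm{id}_N)$-component yields an isomorphism $\colim\Ext^i_{\bB}(B,F_N)\cong\Ext^i_{\bB}(B,N)$.

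Finally I would check that under this identification $\phi_i$ becomes exactly $\iota^i_{B,N}$. This is immediate from the construction of $\phi_i$: on the $(N,\mathrm{id}_N)$-component it is $\iota^i_{B,N}$ postcomposed with the map induced by $\mathrm{id}_N$, that is, with the identity. Hence $\iota^i_{B,N}$ is an isomorphism for all $B,N\in\bB$ and all $i\in\mN$, which is precisely the assertion that $\bB$ is extension full in $\bA$.

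I do not expect a serious obstacle, since the genuine content lives in Theorem~\ref{Thm0}. The only points demanding (minor) care are the terminal-object argument collapsing the filtered colimit onto the $(N,\mathrm{id}_N)$-term, and the bookkeeping verifying that the collapsed $\phi_i$ is literally $\iota^i_{B,N}$ rather than some twist thereof; both are routine once one writes out how $\phi_i$ is assembled from the component maps $\Ext^i_{\bB}(B,X')\to\Ext^i_{\bA}(B,X')\to\Ext^i_{\bA}(B,X)$.
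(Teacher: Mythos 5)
Your proposal is correct and follows essentially the same route as the paper: both invoke Theorem~\ref{Thm0} with $X=N\in\bB$ and then identify $\phi_i$ with $\iota^i_{M,N}$. The only (harmless) difference is that you collapse the colimit via the terminal object $(N,\mathrm{id}_N)$ of $\bI_N$, whereas the paper merely uses that the canonical map $\colim\Ext^i_{\bB}(M,F_N)\to\Ext^i_{\bB}(M,N)$ is an epimorphism and cancels it against the composite isomorphism.
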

\begin{proof}
Take $M,N\in\bB$ and recall $\iota^i_{M,N}$ from \eqref{DefIota}. Then $\phi_i$ from \ref{Thm0}(3) is the composite of
$$\colim\Ext^i_{\bB}(M,F_N)\;\tto\;\Ext^i_{\bB}(M,N)\;\stackrel{\iota^i_{M,N}}{\to}\;\Ext^i_{\bA}(M,N).$$
By Theorem~\ref{Thm0}, this composite is an isomorphism, which forces $\iota^i_{M,N}$ to be an isomorphism as well.
\end{proof}

\begin{remark}
The statement in Theorem~\ref{Thm1} can not be reversed. Typically the extension full subcategories of Corollary~\ref{CorOmegaEF} below, will not satisfy property \ref{Thm0}(1).
\end{remark}


\subsection{Homological properties of ind-completions}

\begin{theorem}\label{ThmInd}
Let $\bC$ be an abelian category.
\begin{enumerate}
\item The category $\bC$ is extension full in $\Ind\bC$ and $\Pro\bC$.
\item For a functor $F:\bI\to\bC$, with $\bI$ filtered, we have a canonical isomorphism
$$\psi_i:\, \colim\Ext^i_{\bC}(Z,F)\;\stackrel{\sim}{\to}\;\Ext^i_{\Ind\bC}(Z,\colim F),$$
for every $i\in\mN$ and $Z\in \bC$.
\end{enumerate}
\end{theorem}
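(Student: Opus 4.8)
The plan is to deduce both parts from Theorem~\ref{Thm1}, by verifying condition \ref{Thm0}(1) for the pair $(\Ind\bC,\bC)$; the $\Pro$ case will then follow by duality, and the isomorphism $\psi_i$ of part (2) by comparing the index category $\bI$ with the category $\bI_X$ underlying Theorem~\ref{Thm0}(3).

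First I would verify \ref{Thm0}(1) for $\bA=\Ind\bC$ and $\bB=\bC$. Take an epimorphism $p:A\tto B$ with $B\in\bC$, and write $A=\colim_{i\in\bI}A_i$ as a filtered colimit with $A_i\in\bC$ and structure morphisms $\eta_i:A_i\to A$. The family $p_i:=p\circ\eta_i:A_i\to B$ is a natural transformation from $(A_i)_i$ to the constant functor at $B$, and its colimit recovers $p$ (the colimit of a constant diagram over the filtered, hence connected, $\bI$ being $B$). Setting $B_i:=\mathrm{im}(p_i)\in\bC$, the computation of images in \ref{indab} gives $B=\mathrm{im}(p)=\colim_i B_i$ in $\Ind\bC$, where the transition maps and the colimit structure maps $\iota_i:B_i\to B$ are the inclusions. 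Now I use that $B\in\bC$ is compact: the identity $\mathrm{id}_B$ lies in $\Hom(B,\colim_i B_i)=\colim_i\Hom(B,B_i)$, so it factors as $\iota_i\circ s$ for some $i$ and some $s:B\to B_i$. Thus $\iota_i$ admits the section $s$, making it a split epimorphism; being also a monomorphism, $\iota_i$ is an isomorphism, so $B_i=B$ and $p_i$ is an epimorphism. Taking $B'=A_i$ and $f=\eta_i$ verifies \ref{Thm0}(1), and Theorem~\ref{Thm1} yields that $\bC$ is extension full in $\Ind\bC$.

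For the $\Pro$ case I would argue by duality: $\Pro\bC=(\Ind(\bC^{\op}))^{\op}$, and since Ext groups are interchanged (with arguments swapped) under passage to the opposite category, and extension fullness is a condition over all objects and all degrees, the extension fullness of $\bC^{\op}$ in $\Ind(\bC^{\op})$ just proved dualises to that of $\bC$ in $\Pro\bC$. For part (2), with $X=\colim F$, the assignment $i\mapsto(F(i),\eta_i)$ defines a functor $u:\bI\to\bI_X$ with $F=F_X\circ u$, and $\psi_i$ factors as the isomorphism $\phi_i$ of \ref{Thm0}(3) precomposed with the map on colimits induced by $u$. It therefore suffices to show that $u$ is cofinal. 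Nonemptiness of the relevant comma categories is precisely the fact that any $f:X'\to\colim F$ with $X'\in\bC$ compact factors through some $F(i)$; connectedness follows from filteredness of $\bI$ together with compactness of $X'$, since two such factorisations become equal after pushing far enough into the colimit. This identifies $\psi_i$ with an isomorphism.

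The main obstacle is the verification of \ref{Thm0}(1): its content is the compactness argument forcing some $B_i$ to equal $B$, i.e.\ that a filtered union of subobjects of a compact object $B$ can equal $B$ only if one of the subobjects already does. Once this is in hand, the appeal to Theorem~\ref{Thm1}, the duality for $\Pro\bC$, and the cofinality computation for $\psi_i$ are all formal.
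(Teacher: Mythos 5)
Your proposal is correct and follows essentially the same route as the paper: condition \ref{Thm0}(1) is verified for $(\Ind\bC,\bC)$ via compactness of $B$ (the paper's Lemma \ref{LemInd} does this with cokernels where you use images, an immaterial difference), the $\Pro$ case is handled by the duality $(\Pro\bC)^{\op}\simeq\Ind(\bC^{\op})$, and part (2) is deduced from the isomorphism $\phi_i$ of \ref{Thm0}(3) together with compactness of objects of $\bC$. Your packaging of part (2) as a cofinality statement for $\bI\to\bI_{\colim F}$ is just a cleaner formulation of the paper's direct surjectivity/injectivity check.
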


\begin{remark}
Theorem~\ref{ThmInd}(1) can also be extracted from the more general \cite[Theoren~15.3.1]{KS}.
Theorem~\ref{ThmInd}(1) shows in particular that all extensions in essentially small abelian categories can be defined in terms of injective or projective resolutions, see \cite[Th\'eor\`eme II.2]{Gabriel}. We will henceforth write extension groups in $\Ind\bC$ simply as $\Ext^\bullet_{\bC}$.
\end{remark}
We start the proof with the following standard observation.

\begin{lemma}\label{LemInd}
The abelian subcategory $\bC$ of $\Ind\bC$ satisfies property~\ref{Thm0}(1).
\end{lemma}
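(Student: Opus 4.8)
The plan is to unwind the definition of property \ref{Thm0}(1) for the pair $(\bC,\Ind\bC)$, which requires the following: given an epimorphism $p:A\tto B$ in $\Ind\bC$ with $B\in\bC$, I must produce a morphism $f:B'\to A$ from some $B'\in\bC$ such that $p\circ f$ remains an epimorphism. The key structural input is the description of morphisms and exactness in $\Ind\bC$ recalled in \ref{indab}: every morphism in $\Ind\bC$ arises as $\colim(\alpha)$ for a natural transformation $\alpha:F\Rightarrow G$ between functors from a small filtered category, and cokernels are computed as colimits of pointwise cokernels. So the first step is to realise $p$ in this form and translate the hypothesis ``$p$ is epi'' into the statement that the colimit of the cokernels of the $\alpha_i$ vanishes.

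Concretely, I would write $A=\colim_{i\in\bI} F(i)$ for a filtered $\bI$ and note that $B\in\bC$ is compact, so the epimorphism $p:A\tto B$ factors (up to the colimit description \eqref{eqlimlim}) through one of the structure maps: there is an index $i_0$ and a morphism $p_{i_0}:F(i_0)\to B$ in $\bC$ whose composite with the canonical map $F(i_0)\to A$ equals $p$. The heart of the argument is then to show that $p_{i_0}$, or at worst $p_{i'}$ for some $i'\geq i_0$, can be taken to be an epimorphism in $\bC$. Since $p$ is an epimorphism, its cokernel is $0$; computing that cokernel as $\colim_i \coker(p_i)$ (viewing $B$ as a constant functor) and using that a filtered colimit in $\Ind\bC$ of objects of $\bC$ vanishes only if the objects themselves eventually vanish along the filtered system, I obtain an index $i'\geq i_0$ with $\coker(p_{i'})=0$, i.e.\ $p_{i'}:F(i')\to B$ is already an epimorphism in $\bC$. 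Setting $B'=F(i')\in\bC$ and letting $f:B'\to A$ be the canonical structure map, the composite $p\circ f=p_{i'}$ is the desired epimorphism.

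The main obstacle I anticipate is the careful justification of ``the cokernel vanishes in the colimit, hence vanishes eventually.'' This is where the filtered structure and the concrete formula \eqref{eqlimlim} must be used with care: one must argue that if the formal colimit $\colim_i \coker(p_i)$ of objects of $\bC$ is the zero object of $\Ind\bC$, then for the given index $i_0$ there exists $i'\geq i_0$ along which the transition map $\coker(p_{i_0})\to\coker(p_{i'})$ is zero (equivalently, that the image of $F(i_0)\to B$ stabilises to all of $B$). This is exactly the compactness/Yoneda computation: the identity endomorphism of the compact object $\coker(p_{i_0})$, if it maps to zero in the colimit, must already be killed at a finite stage. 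I would therefore spell out the reduction to compactness of $B$ (and of the relevant cokernels, which lie in $\bC$) as the technical crux, while the remaining verifications are the routine bookkeeping of chasing structure maps.

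Once this is in place the lemma is immediate, and it feeds directly into Theorem~\ref{ThmInd}: Lemma~\ref{LemInd} establishes hypothesis \ref{Thm0}(1) for $(\bC,\Ind\bC)$, and then Theorem~\ref{Thm1} yields extension fullness while Theorem~\ref{Thm0}(3) yields the colimit formula $\psi_i$, with the $\Pro\bC$ case following by the dual argument in $(\Ind(\bC^{\op}))^{\op}$.
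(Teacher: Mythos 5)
Your argument is essentially the paper's: form the pointwise cokernels $Q(i)=\coker(F(i)\to B)$ of the natural transformation $F\Rightarrow\mathrm{const}_B$ corresponding to $p$, observe via \ref{indab} that $\colim Q=0$, and use compactness of an object of $\bC$ to kill the canonical epimorphism $B\to Q(i_0)$ at a finite stage, forcing $Q(i_0)=0$ and hence $F(i_0)\to B$ epi. One caution: your intermediate slogan that a filtered colimit of objects of $\bC$ ``vanishes only if the objects themselves eventually vanish'' is false in general (consider a system with zero transition maps between nonzero objects); what saves the argument here, as your own discussion of the technical crux correctly identifies, is that the transition maps $Q(i)\to Q(i')$ are all epimorphisms (quotients of the fixed $B$), so a vanishing transition map forces the target itself to vanish.
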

\begin{proof}
Take an epimorphism $p:\colim F\tto B$ in $\Ind\bC$, for $B\in\bC$ and some functor $F:\bI\to\bC$ with $\bI$ filtered. Consider the cokernel $Q$ of the natural transformation from $F$ to the constant functor $\bI\to\bC$ at $B$.
By \ref{indab}, we find $\colim Q=0.$ Using the fact that $B$ is compact then shows that the canonical epimorphism $B\to Q(i_0)$ is zero for some $i_0\in\bI$ and hence $Q(i_0)\simeq 0$. Consequently $F(i_0)\to \colim F\tto B$ is an epimorphism.
\end{proof}





\begin{proof}[Proof of Theorem~\ref{ThmInd}]
For part (1), it suffices to consider $\Ind\bC$, since $(\Pro\bC)^{\op}$ is equivalent to $ \Ind(\bC^{\op})$. The statement thus follows from Theorem~\ref{Thm1} and Lemma~\ref{LemInd}.

Now we will prove part (2). Set $Y:=\colim F$.

First we prove that $\psi_i$ is surjective. Consider $x\in \Ext^i(Z,Y)$. Since $\phi_i$ in \ref{Thm0}(3) is surjective, there exists a morphism $f:Y'\to Y$ with $Y'\in\bC$ such that $x$ is in the image of $\Ext^i_{\bC}(Z,f)$. Since $Y'$ is compact, $f$ must factor through some morphism $Y'\to  F(j)$ and hence $x$ is in the image of $ \Ext^i(Z, F(j))\to\Ext^i(Z,Y)$.

Now we prove that $\psi_i$ is injective. Take $z\in \Ext^i(Z, F(j))$ which is sent to zero in $\Ext^i(Z,Y)$. Since $\phi_i$ in \ref{Thm0}(3) is injective, the canonical morphism $ F(j)\to Y$ must factor through some $ F(j) \to Y''$ in $\bC$ such that $z$ is sent to zero in $\Ext^i(Z,Y'')$. Again, since $Y''$ is compact, the corresponding $Y''\to Y$ factors as $Y''\to F(l)$ for some $l\in\bI$ and hence $z$ is also zero in $\Ext^i(Z,F(l))$ and therefore zero in $\colim\Ext^i(Z, F)$. This concludes the proof.
\end{proof}

\subsection{Some (non)examples} Fix a field $k$.

\begin{theorem}
For a coalgebra $C$, {\rm{comod-}}$C$ is extension full in {\rm Comod-}$C$.

In particular, the category $\Rep G$ of finite dimensional algebraic representations of an affine group scheme $G/k$ is extension full in the category $\Rep^\infty G$ of all algebraic representations.
\end{theorem}
\begin{proof}
This is a direct application of Lemma~\ref{LemTak} and Theorem~\ref{Thm1}.
\end{proof}

\subsubsection{}\label{DefMN}Despite the result in Theorem~\ref{ThmInd}(2), the analogue of equation~\eqref{eqlimlim} does not apply to extension groups. We provide a concrete counterexample.

Let $\bC$ be $\Rep\mathbb{G}_a$, the category of finite dimensional algebraic representations of the additive group $\mathbb{G}_a$ of $k$. In other words, $\Ind\bC$ (respectively $\bC$) is the category of all (respectively all finite dimensional) comodules of the coalgebra $k[x]$, with comultiplication $\Delta(x)=x\otimes 1+1\otimes x$. Note also that a comodule of $k[x]$ is the same as a module of the algebra $k[\theta]$ on which $\theta$ acts nilpotently.

For $i\in\mN$, we denote the $i$-dimensional indecomposable module in $\bC$ by $M_i$, let $M$ in $\Ind\bC$ be the direct limit of
\begin{equation}\label{defM}M_1\hookrightarrow M_2\hookrightarrow M_3\hookrightarrow \cdots,\end{equation}
which means that $M\simeq k[x]$ is the injective hull of $M_1\simeq k$, and we set $N=\oplus_iM_i$.

\begin{lemma}
With notation of \ref{DefMN}, the canonical morphism
$$\Ext^1_{\bC}(M,N)\;\to\;\varprojlim\Ext^1_{\bC}(M_i,N)$$
is an epimorphism, but not a monomorphism.
\end{lemma}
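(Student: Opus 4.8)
The plan is to reduce the entire statement to the non-vanishing of a single $\varprojlim{}^1$-term by means of a mapping telescope. Since $M=\varinjlim M_i$ is a sequential colimit along the monomorphisms $f_i\colon M_i\hookrightarrow M_{i+1}$ and $\Ind\bC$ is a Grothendieck category, I would start from the standard telescope short exact sequence
$$0\to\bigoplus_i M_i\xrightarrow{\ \mathrm{id}-s\ }\bigoplus_i M_i\to M\to 0,$$
where $s$ restricts to $f_i$ on the $i$-th summand. Applying $\Hom_{\bC}(-,N)$ and passing to the long exact $\Ext$-sequence, using that $\Hom$ and Yoneda-$\Ext^1$ convert the coproduct in the first variable into a product, together with the classical identifications $\ker(\mathrm{id}-s^\ast)=\varprojlim$ and $\coker(\mathrm{id}-s^\ast)=\varprojlim{}^1$ of the resulting inverse systems, the sequence collapses to
$$0\to\varprojlim{}^1\Hom_{\bC}(M_i,N)\to\Ext^1_{\bC}(M,N)\xrightarrow{\ \rho\ }\varprojlim\Ext^1_{\bC}(M_i,N)\to 0.$$
A direct check shows that $\rho$ is precisely the canonical morphism of the lemma (restriction along $M_i\to M$), so the epimorphism assertion is immediate and the monomorphism assertion becomes exactly the claim $\varprojlim{}^1\Hom_{\bC}(M_i,N)\neq 0$.

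To analyse this $\varprojlim{}^1$, I would compute the inverse system explicitly. Writing $M_j=k[\theta]/(\theta^j)$, one has $\Hom_{\bC}(M_i,M_j)=\ker(\theta^i\colon M_j\to M_j)$, and since the inclusion $f_i$ sends a generator of $M_i$ to $\theta$ times a generator of $M_{i+1}$, the transition map $\Hom_{\bC}(M_{i+1},M_j)\to\Hom_{\bC}(M_i,M_j)$ is multiplication by $\theta$. As each $M_i$ is finite dimensional, hence compact, $\Hom_{\bC}(M_i,N)=\bigoplus_j\Hom_{\bC}(M_i,M_j)$, so the inverse system $(\Hom_{\bC}(M_i,N))_i$ is the direct sum over $j$ of the systems $A^{(j)}_\bullet$ with $A^{(j)}_i=\ker(\theta^i|_{M_j})$ and transition maps given by $\theta$.

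For each fixed $j$ the system $A^{(j)}_\bullet$ consists of finite-dimensional spaces whose transition images stabilise to $0$, so it is Mittag--Leffler and $\varprojlim{}^1_i A^{(j)}_i=0$. The phenomenon I want to exploit is that $\varprojlim{}^1$ does not commute with the infinite direct sum over $j$, because the stabilisation is not uniform in $j$. Concretely, I would prove that $\mathrm{id}-s^\ast$ fails to be surjective by exhibiting an element outside its image: take $b_i$ to be the identity $1_{M_i}\in\Hom_{\bC}(M_i,M_i)$, placed in the $j=i$ component. Because the transition maps preserve the $j$-grading and $\varprojlim_i A^{(j)}_i=0$, any preimage $(a_i)$ is uniquely determined component-wise, and solving $b_i=a_i-\theta a_{i+1}$ inside each $A^{(j)}_\bullet$ forces $a^{(j)}_i=\theta^{\,j-i}1_{M_j}$ for $j\ge i$ and $0$ otherwise. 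Since $\theta^{\,j-i}\neq 0$ in $M_j$, for every fixed $i$ the element $a_i$ has nonzero components for all $j\ge i$, so $a_i\notin\bigoplus_j\Hom_{\bC}(M_i,M_j)=\Hom_{\bC}(M_i,N)$. Thus no genuine preimage $(a_i)$ exists, whence $\varprojlim{}^1\Hom_{\bC}(M_i,N)\neq 0$ and $\rho$ is not a monomorphism.

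The main obstacle is exactly this last non-vanishing: each summand system is Mittag--Leffler, so a purely formal argument gives nothing, and one must produce the explicit element $(b_i)$ together with its forced, non-finitely-supported formal preimage in order to detect the failure of the Mittag--Leffler condition for the direct sum. By contrast, verifying the telescope sequence, identifying $\rho$ with the canonical map, and computing $\Hom_{\bC}(M_i,M_j)$ together with its transition maps are all routine.
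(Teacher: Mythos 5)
Your proof is correct, and it takes a genuinely different route from the one in the paper. You reduce everything to the telescope (Milnor) sequence
$0\to\varprojlim{}^1\Hom_{\bC}(M_i,N)\to\Ext^1_{\bC}(M,N)\to\varprojlim\Ext^1_{\bC}(M_i,N)\to0$,
which is legitimate here because $\Ind\bC$ is a Grothendieck category (a comodule category), so the telescope $0\to\oplus_iM_i\to\oplus_iM_i\to M\to0$ is exact and $\Ext^1(-,N)$ turns the coproduct into a product; surjectivity is then automatic and the whole lemma collapses to the non-vanishing of $\varprojlim{}^1\Hom_{\bC}(M_i,N)$, which you establish by an explicit element detecting the failure of the Mittag--Leffler condition for the direct sum over $j$ (each summand system $A^{(j)}_\bullet$ is Mittag--Leffler with vanishing limit, but not uniformly in $j$); your identification of the transition maps as multiplication by $\theta$ and the forced preimage $a^{(j)}_i=\theta^{j-i}1_{M_j}$ are both correct. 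The paper instead writes down the explicit two-step injective coresolution $0\to N\to M\otimes_kV\to M\otimes_kV\to0$, using that $M\otimes_kV$ corepresents the forgetful functor, computes $\Ext^1(M,N)$ and each $\Ext^1(M_i,N)$ as concrete cokernels, applies Mittag--Leffler to the finite-dimensional kernels $K_i$ to pass to the inverse limit, and locates the kernel of the canonical map in the cokernel of $\Hom_k(M,V)\hookrightarrow\varprojlim K_i\simeq(M\otimes V)^\ast$. Your route is shorter on the surjectivity side and isolates the obstruction as a standard $\varprojlim{}^1$-class; the paper's computation is more hands-on and yields explicit descriptions of both extension groups. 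The one step you should justify rather than assert is $\Ext^1_{\bC}(\oplus_iM_i,N)\simeq\prod_i\Ext^1_{\bC}(M_i,N)$ (e.g.\ by computing with an injective resolution of $N$ in $\Ind\bC$), since this is where your argument genuinely uses more than formal Yoneda-extension manipulations.
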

\begin{proof}
We choose a basis $\{e_j\,|\, j>0\}$ of $M$ such that $e_i\in M_i\subset M$. We also introduce the $k$-linear morphism $\rho:M\to k$ by $\rho(e_j)=\delta_{j1}$.
For any vector space $V$ and for $X\in\Ind\bC$, we have an isomorphism
$$\Hom_{\bC}(X,M\otimes_k V)\;\stackrel{\sim}{\to}\;\Hom_k(X,V),\quad\mbox{given by}\quad f\mapsto (\rho\otimes V)\circ f.$$
Indeed, by application of \eqref{eqlimlim}, this claim can be reduced to the case where $X$ is finite dimensional and $V=k$, in which case it is obvious.

Consider an injective coresolution 
$$0\to N\to M\otimes_k V\to M\otimes_k V\to0,$$
where $V$ is a vector space with a basis $\{v_i\,|\,\mZ_{>0}\}$ and the first morphism corresponds to $N_i\hookrightarrow M\otimes (kv_i)$. Since $\Hom_{\bC}(M,N)=0$, applying the functors $\Hom_{\bC}(M,-)$ and $\Hom_{\bC}(M_i,-)$ yields, for each $i>0$, a commutative diagram with exact rows
$$\xymatrix{
0\ar[r]& \Hom_k(M,V)\ar@{->>}[d]\ar[r]&\Hom_k(M,V)\ar[r]\ar@{->>}[d]&\Ext^1_{\bC}(M,N)\ar[r]\ar@{->>}[d]&0\\
0\ar[r]&K_i\ar[r]&\Hom_k(M_i,V)\ar[r]&\Ext^1_{\bC}(M_i,N)\ar[r]&0,
}$$
with $K_i$ the cokernel of $\Hom_{\bC}(M_i,N)\hookrightarrow \Hom_k(M_i,V)$.  It follows easily that $K_i$ has dimension $i(i-1)/2$.
Let $\langle \cdot,\cdot\rangle$ be the bilinear form  on $V$ for which $\{v_i\}$ is an orthonormal basis.
Then we can interpret the left vertical morphism as
$$\Hom_k(M,V)\tto K_i,\quad f\mapsto (\langle v_a, f(e_b)\rangle)_{a+b\le i}.$$

Consequently, we can interpret the map to the inverse limit as
$$\Hom_k(M,V)\to\varprojlim K_i\simeq (M\otimes V)^\ast,\quad  f\mapsto \phi_f,\qquad\mbox{with $\phi_f(x\otimes v)=\langle v, f(x)\rangle$,}
$$
which is a monomorphism but not an isomorphism.

Since the spaces $K_i$ are finite dimensional, the Mittag-Leffler condition is satisfied and we have a commutative diagram with exact rows
$$\xymatrix{
0\ar[r]& \Hom_k(M,V)\ar@{^{(}->}[d]\ar[r]&\Hom_k(M,V)\ar[r]\ar@{=}[d]&\Ext^1_{\bC}(M,N)\ar[r]\ar[d]&0\\
0\ar[r]&\varprojlim K_i\ar[r]&\Hom_k(M,V)\ar[r]&\varprojlim\Ext^1_{\bC}(M_i,N)\ar[r]&0.
}$$
Since the left downwards arrow is a monomorphism which is not an epimorphism, the right downwards arrow is a epimorphism which is not a monomorphism.
\end{proof}

For completeness we also mention examples of the fact that an abelian category $\bC$, despite being extension full in $\Ind\bC$, need not be a Serre subcategory of $\Ind\bC$ (as it need not be a topologising subcategory).
\begin{example}
\begin{enumerate}
\item  For any non-noetherian coherent ring $R$, the abelian category $\bC:=R\mbox{-Mod}_{fp}$ of finitely presented modules is an abelian but not a topologising subcategory of $\Ind\bC=R\mbox{-Mod}$, the category of all modules. For instance, if $R=k[x_1,x_2,x_3,\ldots]$, then $R$ itself is in $\bC$, but its one-dimensional quotients are not.
\item Let $\bC$ be the category of countably dimensional vector spaces over $k$. Then any infinite dimensional vector space $V$ in $\bC$ has a subobject in $\Ind\bC$, which does not belong to $\bC$, of the form $X:=\varinjlim_{i\in\mN}k^i$ (where the direct limit is taken in $\Ind\bC$, not in $\bC$). Indeed, since direct limits of short exact sequences are exact in $\Ind\bC$ by definition, $X$  is a subobject of $V$. It follows from \eqref{eqlimlim} that $X$  is not isomorphic to any object in $\bC$. Concretely, since for any $U\in\bC$, we have $\Hom(U,X)=\varinjlim\Hom(U,k^i)$, any morphism (in particular an isomorphism) $U\to X$ factors as $U\to k^j\to X$ for some $j\in\mN$. Hence if $X\in\bC$, it has to be finite dimensional. But $\Hom(X,k)=\varprojlim\Hom_k(k^i,k)$ clearly contradicts that $X$ be a finite dimensional space.
\item Assume char$(k)=0$ and take $t\in k$ transcendental over $\mathbb{Q}$ (although $t\not\in\mZ$ suffices). In \cite[\S 2.19]{Del90}, Deligne constructs a tensor category $\mathbf{T}$ which is not of finite length. One sees easily that $X_t\in\mathbf{T}$ contains a non-compact subobject in $\Ind\mathbf{T}$.
\end{enumerate}
\end{example}

\section{Lower finite highest weight categories}\label{SecDef}
Fix an algebraically closed field $k$ for the entire section.
\subsection{Comparing definitions}
Let $\bC$ denote a finite length category, see Section~\ref{SFL}.

\subsubsection{} Denote by $\Lambda$ a set labelling the isomorphism classes of simple objects in $\bC$. For $\lambda\in\Lambda$, we denote the corresponding simple object by $L(\lambda)$ and its injective hull in $\Ind\bC$ by $I(\lambda)$. By our assumptions, we have $\End_{\bC}(L(\lambda))=k$. For any subset $\Omega\subset\Lambda$, we consider the Serre subcategory $\bC_{\Omega}$ of $\bC$ generated by the simple objects $L(\lambda)$ with $\lambda\in\Omega$. We denote by $\Gamma_{\Omega}:\bC\to\bC_{\Omega}$ the right adjoint of the inclusion functor, and use the same notation for the corresponding functor $\Ind\bC\to\Ind\bC_\Omega$. Clearly, if $\Omega$ is a finite set, the abelian category $\bC_\Omega$ is finite if and only if $\Gamma_\Omega I(\lambda) \in\bC$ (and hence $\Gamma_\Omega I(\lambda) \in\bC_\Omega$), for all $\lambda\in\Omega$.

 The notation $(\bC,\le)$ will stand for $\bC$ considered together with a partial order $\le$ on $\Lambda$. A partial order on $\Lambda$ is {\bf lower finite} if the set
 $$\le\lambda\;:=\;\{\mu\in\Lambda\,|\, \mu\le \lambda\}$$ is finite, for each $\lambda\in\Lambda$. We also use notation as $<\lambda$ and $\ge \lambda$ for similarly defined sets. A subset of $\Lambda$ is an {\bf ideal} if it is the union of subsets $\le\lambda$.

\begin{definition}\label{DefHWC}
For a lower finite partial order $\le$ on $\Lambda$, the pair $(\bC,\le)$ is a {\bf lower finite highest weight (lfhw) category} if, for a fixed $\diamond\in\{\ast,\dagger\}$, we have that for each $\lambda\in\Lambda$  (i$\diamond$)-(iii$\diamond$) below is satisfied.
\begin{enumerate}
\item[(i$\ast$)]  The category $\bC_{\le\lambda}$ is finite abelian.
\item[(ii$\ast$)] We have $[\nabla(\lambda):L(\lambda)]=1$, for $\nabla(\lambda):=\Gamma_{\le\lambda}I(\lambda)\in\bC_{\le\lambda}$.
\item[(iii$\ast$)] The object $I(\lambda)$ has a directed set of subobjects $I_\alpha\subset I(\lambda)$, with $I(\lambda)=\cup_\alpha I_\alpha\simeq \varinjlim_\alpha I_\alpha$, such that each $I_\alpha$ has a finite filtration with each quotient isomorphic to $\nabla(\mu)$ for $\mu\ge \lambda$ (in particular $I_\alpha\in\bC$).
\end{enumerate}

\begin{enumerate}
\item[(i$\dagger$)]  The object $L(\lambda)$ admits an injective hull $\nabla(\lambda)$ and projective cover $\Delta(\lambda)$ in $\bC_{\le\lambda}$.
\item[(ii$\dagger$)] We have $[\nabla(\lambda):L(\lambda)]=1$ and $\nabla(\lambda)$ remains injective as an object in $\bC_{\Omega}$, for each finite ideal $\Omega\ni \lambda$ in which $\lambda$ is maximal.
\item[(iii$\dagger$)] We have $\Ext^2_{\bC}(\Delta(\lambda),\nabla(\mu))=0$, for each $\mu\in\Lambda$.
\end{enumerate}
\end{definition}

The $\ast$-definition is a direct reformulation of \cite[Definition~3.53]{BS}. The $\dagger$-definition is a minor (equivalent) simplification of \cite[Definition~2.1]{RW}, see Remark~\ref{RemRW}. The following theorem is a generalisation of part of \cite[Theorem~1]{DR}.

\begin{theorem}\label{ThmDef}
The $\ast$-version and $\dagger$-version of Definition~\ref{DefHWC} are equivalent.
\end{theorem}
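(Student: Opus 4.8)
The plan is to prove the two implications separately, but first to set up a common toolbox built around the section functors $\Gamma_\Omega$ and the truncations $\bC_\Omega$, observing that in both definitions $\nabla(\lambda)$ is the same object, namely the injective hull of $L(\lambda)$ in the finite category $\bC_{\le\lambda}$. Since each inclusion $\bC_\Omega\hookrightarrow\bC$ (and the induced $\Ind\bC_\Omega\hookrightarrow\Ind\bC$) is exact, its right adjoint $\Gamma_\Omega$ preserves injectives and sends $X$ to its largest subobject lying in $\bC_\Omega$; in particular $\Gamma_{\Omega'}\Gamma_\Omega=\Gamma_{\Omega'}$ for $\Omega'\subseteq\Omega$, and for finite $\Omega$ the functor $\Gamma_\Omega$ commutes with directed unions of subobjects (objects of $\bC_\Omega$ are then of finite length, hence compact in $\Ind\bC$). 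From this one reads off that $\Gamma_{\le\lambda}I(\nu)$ is the injective hull of $L(\nu)$ in $\bC_{\le\lambda}$ for $\nu\le\lambda$, and $\Gamma_{\le\lambda}I(\nu)=0$ for $\nu\not\le\lambda$. The central bookkeeping lemma I would isolate is: for a finite ideal $\Omega$ in which $\lambda$ is maximal, $\Gamma_\Omega I(\lambda)=\nabla(\lambda)$, so that $\nabla(\lambda)$ is the injective hull of $L(\lambda)$ in $\bC_\Omega$. Under the $\ast$-hypotheses this follows from (iii$\ast$): writing $I(\lambda)=\varinjlim I_\alpha$ and applying $\Gamma_\Omega$ to a $\nabla$-filtration of $I_\alpha$, the subquotients $\nabla(\nu)$ with $\nu>\lambda$ (hence $\nu\notin\Omega$) are killed, since their socle $L(\nu)$ is not in $\bC_\Omega$, while the $\nabla(\lambda)$-subquotients survive, forcing $\Gamma_\Omega I(\lambda)\subseteq\nabla(\lambda)$; the reverse inclusion is clear. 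Under the $\dagger$-hypotheses this identification is exactly the injectivity clause of (ii$\dagger$).

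Second comes the homological reduction, which is what makes condition (iii) tractable in the big category. Because $\Delta(\lambda)$ is projective in $\bC_{\le\lambda}$ (it is the projective cover of the maximal weight there) and $\Gamma_{\le\lambda}$ is a right adjoint of an exact inclusion carrying injectives to injectives, the Grothendieck spectral sequence for $\Hom_{\bC_{\le\lambda}}(\Delta(\lambda),-)\circ\Gamma_{\le\lambda}$ degenerates and gives, for $Y\in\Ind\bC$ and all $n$,
$$\Ext^n_{\bC}(\Delta(\lambda),Y)\;\cong\;\Hom_{\bC_{\le\lambda}}(\Delta(\lambda),R^n\Gamma_{\le\lambda}Y),$$
where the left-hand group is computed in $\Ind\bC$ and agrees with the one in $\bC$ by extension fullness (Theorem~\ref{ThmInd}). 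As $\Delta(\lambda)$ is the projective cover of $L(\lambda)$, the right-hand side records an $L(\lambda)$-multiplicity, so that
$$\dim_k\Ext^n_{\bC}(\Delta(\lambda),\nabla(\mu))\;=\;[\,R^n\Gamma_{\le\lambda}\nabla(\mu):L(\lambda)\,].$$
This translates (iii$\dagger$) into the vanishing of the $L(\lambda)$-multiplicity of $R^2\Gamma_{\le\lambda}\nabla(\mu)$, and reduces every extension computation between standard and costandard objects to the single left-exact functor $\Gamma_{\le\lambda}$.

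For the direction $\ast\Rightarrow\dagger$, conditions (i$\dagger$) and the first half of (ii$\dagger$) are immediate: (i$\ast$) makes $\bC_{\le\lambda}$ finite, so injective hulls and projective covers exist, $[\nabla(\lambda):L(\lambda)]=1$ is (ii$\ast$), and the injectivity clause of (ii$\dagger$) is the bookkeeping lemma. For (iii$\dagger$) I would prove the stronger statement $\Ext^{>0}_{\bC}(\Delta(\lambda),\nabla(\mu))=0$ by dévissage. From the short exact sequence $0\to\nabla(\mu)\to I(\mu)\to Q\to 0$, with $Q$ a directed union of objects $\nabla$-filtered by $\nabla(\nu)$, $\nu>\mu$ (read off from (iii$\ast$) applied to $I(\mu)$), and from $\Ext^{>0}_{\bC}(\Delta(\lambda),I(\mu))=0$, the long exact sequence yields $\Ext^{i+1}_{\bC}(\Delta(\lambda),\nabla(\mu))\cong\Ext^i_{\bC}(\Delta(\lambda),Q)$ for $i\ge 1$; since $\Ext$ commutes with the filtered colimit by Theorem~\ref{ThmInd}(2) and is additive along finite $\nabla$-filtrations, an induction on homological degree reduces everything to the base case $\Ext^1_{\bC}(\Delta(\lambda),\nabla(\mu))=0$. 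That base case is the $n=1$ instance of the multiplicity formula, i.e.\ the vanishing of $R^1\Gamma_{\le\lambda}\nabla(\mu)$ (equivalently the acyclicity of $\Gamma_{\le\lambda}$ on costandards), which I would deduce from (iii$\ast$) together with the bookkeeping lemma.

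Finally, $\dagger\Rightarrow\ast$, which I expect to be the hard direction. The natural candidates for the exhaustion in (iii$\ast$) are $I_\Omega:=\Gamma_\Omega I(\lambda)$, the injective hulls of $L(\lambda)$ in $\bC_\Omega$, as $\Omega$ ranges over the finite ideals containing $\lambda$; these form a directed union with colimit $I(\lambda)$, and by the groundwork the costandard objects of $\bC_\Omega$ coincide with the relevant $\nabla(\nu)$. The whole problem is therefore to show that each finite truncation $\bC_\Omega$ is a classical finite highest weight category: then $I_\Omega$ acquires a $\nabla$-filtration and (i$\ast$)--(iii$\ast$) follow by passing to the colimit. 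I would argue this by induction on $|\Omega|$, removing a maximal element $\lambda$ to present $\bC_\Omega$ as a one-step extension (recollement) of $\bC_{\Omega\setminus\{\lambda\}}$ by $\bC_{\{\lambda\}}$; condition (ii$\dagger$) supplies the injectivity of $\nabla(\lambda)$ required for the gluing, while condition (iii$\dagger$) is precisely the $\Ext^2$-vanishing that removes the obstruction to the glued heart being highest weight. The main obstacle, and the reason the argument is delicate, is that $\bC_\Omega$ is in general \emph{not} extension full in $\bC$ (property \ref{Thm0}(1) fails for such Serre subcategories), so the hypothesis $\Ext^2_{\bC}(\Delta(\lambda),\nabla(\mu))=0$ cannot simply be restricted to $\bC_\Omega$; it must be transported through the multiplicity formula of the second step, that is through the derived functors of $\Gamma_{\le\lambda}$, which is exactly the bridge that lets the ambient $\Ext^2$-condition control the internal structure of the finite truncations.
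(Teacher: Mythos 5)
Your $\ast\Rightarrow\dagger$ direction is essentially the paper's own argument: d\'evissage along $0\to\nabla(\mu)\to I(\mu)\to Q(\mu)\to 0$, using that $Q(\mu)$ is a filtered colimit of $\nabla$-flagged objects, compatibility of $\Ext$ with filtered colimits (Theorem~\ref{ThmInd}(2)), and the base case $\Ext^1_{\bC}(\Delta(\lambda),\nabla(\mu))=0$ (the paper's Lemma~\ref{Lemab}(2)). One caution in your ``bookkeeping lemma'': when you apply $\Gamma_\Omega$ to a $\nabla$-filtration of $I_\alpha$, condition (iii$\ast$) only guarantees layers $\nabla(\nu)$ with $\nu\ge\lambda$, so a copy of $\nabla(\lambda)$ could a priori occur above the socle layer and would \emph{not} be killed by $\Gamma_\Omega$; the paper instead argues through $\Hom(L(\nu),Q(\lambda))=0$ for $\nu\not\ge\lambda$ (Remark~\ref{RemQ}(3)). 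This is a repairable slip.

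The genuine gap is in $\dagger\Rightarrow\ast$, which you correctly single out as the hard direction but do not actually prove. Your plan --- induct on $|\Omega|$, present $\bC_{\Omega'}$ as a recollement of $\bC_{\Omega}$ by $\bC_{\{\mu\}}$, and let (iii$\dagger$) ``remove the obstruction'' --- never specifies the mechanism, and that mechanism is the entire content of the proof. Note that under (i$\dagger$)--(iii$\dagger$) alone you do not even know that $\bC_\Omega$ is a finite category, i.e.\ that $\Gamma_\Omega I(\lambda)$ has finite length; so ``each finite truncation is a classical finite highest weight category'' is itself what must be proven, not a way station. The paper's bridge is a BGG-reciprocity statement (Lemma~\ref{LemPf}): for finite ideals $\Omega\subset\Omega'$ with $\Omega'\setminus\Omega=\{\mu\}$ one shows $I_{\Omega'}/I_\Omega\simeq\nabla(\mu)^{n_\mu}$ with $n_\mu=[\Delta(\mu):L(\lambda)]$, by induction along inclusion of finite ideals. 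The induction hypothesis equips $I_\Omega$ with a $\nabla$-flag; then (iii$\dagger$) kills $\Ext^2_{\bC}(\Delta(\nu),I_\Omega)$, whence $\Ext^1_{\bC}(\Delta(\nu),Q)=0$ for $Q=I_{\Omega'}/I_\Omega$ and all $\nu\in\Omega'$, and a count of $\Hom(\Delta(\nu),-)$ identifies $Q$ with $\nabla(\mu)^{n_\mu}$. This is precisely the transport of the ambient $\Ext^2$-condition into the truncations that you assert must exist; without constructing it, neither (iii$\ast$) nor (i$\ast$) is established. Your spectral-sequence reformulation $\Ext^n_{\bC}(\Delta(\lambda),Y)\cong\Hom(\Delta(\lambda),R^n\Gamma_{\le\lambda}Y)$ is a legitimate alternative framework, but you would still have to prove that $\Gamma_\Omega I(\lambda)$ is finite and $\nabla$-flagged, which is the same work in different notation.
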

The proof of the theorem will occupy the remainder of this subsection.

 \begin{remark}
\label{RemQ}
\begin{enumerate}
\item Under assumption~\ref{DefHWC}(i$\ast$), the simple object $L(\lambda)$ admits a projective cover in $\bC_{\le\lambda}$, which we denote by $\Delta(\lambda)$, in accordance with \ref{DefHWC}(i$\dagger$). The objects $\Delta(\lambda)$ and $\nabla(\lambda)$ are the {\bf standard} and {\bf costandard} objects of $(\bC,\le)$.
\item By a {\bf$\nabla$-flag} of an object in $\bC$ we mean a (finite) filtration with each quotient given by a costandard object.

\item Under condition \ref{DefHWC}(iii$\ast$), the quotient $Q(\lambda):=I(\lambda)/\nabla(\lambda)$ satisfies the exact same property as attributed to $I(\lambda)$. Indeed, since $I_\alpha$ (when non-zero) has simple socle $L(\lambda)$ and a $\nabla$-flag, it must contain $\nabla(\lambda)$ in a way that the quotient $I_\alpha/\nabla(\lambda)$ inherits each $\nabla$-flag from $I_\alpha$.
\item For $\diamond\in\{\ast,\dagger\}$ and $a\in\{i,ii,iii\}$, we will abbreviate `assume that \ref{DefHWC}(a$\diamond$) is satisfied for every $\lambda\in\Lambda$', to `assume (a$\diamond$)'.
\end{enumerate}
\end{remark}

\begin{lemma}
\label{Lemab}
Assume either (i$\ast$)-(iii$\ast$) or (i$\dagger$)-(ii$\dagger$). For all $\lambda,\mu\in\Lambda$
\begin{enumerate}
\item $\dim_k\Hom(\Delta(\lambda),\nabla(\mu))=\delta_{\lambda,\mu}$;
\item $\Ext^1_{\bC}(\Delta(\lambda),\nabla(\mu))=0$.
\end{enumerate} \end{lemma}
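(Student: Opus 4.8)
The plan is to first record the structural facts about standard and costandard objects that hold under either hypothesis, then treat the two parts in turn, splitting part~(2) according to the order relation between $\lambda$ and $\mu$. Under either (i$\ast$) or (i$\dagger$) the object $\Delta(\lambda)$ is the projective cover of $L(\lambda)$ in $\bC_{\le\lambda}$ and $\nabla(\mu)$ the injective hull of $L(\mu)$ in $\bC_{\le\mu}$; in particular $\Delta(\lambda)$ has simple top $L(\lambda)$ with all composition factors labelled by $\le\lambda$, while $\nabla(\mu)$ has simple socle $L(\mu)$ with all factors labelled by $\le\mu$, and $[\nabla(\mu):L(\mu)]=1$ by (ii$\ast$)/(ii$\dagger$).

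For part~(1) I would look at the image $J$ of a nonzero map $\Delta(\lambda)\to\nabla(\mu)$: as a quotient of $\Delta(\lambda)$ it has top $L(\lambda)$ and factors among $\{L(\nu):\nu\le\lambda\}$, while as a subobject of $\nabla(\mu)$ it has socle $L(\mu)$ and factors among $\{L(\nu):\nu\le\mu\}$. Hence the factor $L(\lambda)$ forces $\lambda\le\mu$ and the factor $L(\mu)$ forces $\mu\le\lambda$, so $\lambda=\mu$; and in that case multiplicity one of $L(\lambda)$ in $\nabla(\lambda)$ forces $J$ to be simple, so every map factors as $\Delta(\lambda)\tto L(\lambda)\hookrightarrow\nabla(\lambda)$ and the $\Hom$-space is one-dimensional.

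For part~(2) with $\mu\le\lambda$ the argument is uniform: in any short exact sequence $0\to\nabla(\mu)\to E\to\Delta(\lambda)\to 0$ the middle term has all composition factors labelled by $\le\lambda$, so $E\in\bC_{\le\lambda}$, the whole sequence lives in the Serre subcategory $\bC_{\le\lambda}$, and it splits because $\Delta(\lambda)$ is projective there. As $\iota^1$ is injective and every class is represented this way, $\Ext^1_{\bC}(\Delta(\lambda),\nabla(\mu))=0$.

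The main obstacle is the remaining case $\mu\not\le\lambda$, where $E$ acquires the factor $L(\mu)$ with $\mu\not\le\lambda$ and thus leaves $\bC_{\le\lambda}$, so the two hypotheses must be handled separately. Under (i$\dagger$)--(ii$\dagger$) I would set $\Omega=(\le\lambda)\cup(\le\mu)$, a finite ideal containing all factors of $E$; since $\mu\not\le\lambda$ one checks $\mu$ is maximal in $\Omega$ (any $\nu\in\Omega$ with $\nu\ge\mu$ satisfies either $\nu\le\mu$, giving $\nu=\mu$, or $\nu\le\lambda$, giving $\mu\le\lambda$, a contradiction), so by (ii$\dagger$) the object $\nabla(\mu)$ is injective in $\bC_\Omega$ and the sequence, which lies in $\bC_\Omega$, splits. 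Under (i$\ast$)--(iii$\ast$) I would instead use extension fullness (Theorem~\ref{ThmInd}) to compute in $\Ind\bC$ via the injective copresentation $0\to\nabla(\mu)\to I(\mu)\to Q(\mu)\to 0$, which gives $\Ext^1_{\bC}(\Delta(\lambda),\nabla(\mu))\cong\coker\big(\Hom(\Delta(\lambda),I(\mu))\to\Hom(\Delta(\lambda),Q(\mu))\big)$; by Remark~\ref{RemQ}(3) the quotient $Q(\mu)$ is a filtered colimit of objects of $\bC$ carrying finite $\nabla$-flags with sections $\nabla(\nu)$ for $\nu\ge\mu$, whence $\nu\ne\lambda$, so part~(1) together with compactness of $\Delta(\lambda)$ yields $\Hom(\Delta(\lambda),Q(\mu))=0$ and the cokernel vanishes. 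The delicate points are verifying maximality of $\mu$ in $\Omega$ and confirming that none of the sections of $Q(\mu)$ is $\nabla(\lambda)$, both of which reduce to the hypothesis $\mu\not\le\lambda$.
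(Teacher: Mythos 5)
Your proposal is correct and follows essentially the same route as the paper: part (1) by the top/socle argument, and part (2) by splitting into $\mu\le\lambda$ (projectivity of $\Delta(\lambda)$ in $\bC_{\le\lambda}$) and $\mu\not\le\lambda$, handled under ($\ast$) via the copresentation $\nabla(\mu)\hookrightarrow I(\mu)\tto Q(\mu)$ together with Remark~\ref{RemQ}(3) and part (1), and under ($\dagger$) via maximality of $\mu$ in the ideal generated by $\lambda$ and $\mu$. The only difference is that you spell out details the paper leaves implicit.
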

\begin{proof}
Claim (1) is immediate from the definition of $\Delta(\lambda)$ and $\nabla(\mu)$.

If $\mu\le\lambda$, then the extension group in (2) can be calculated in $\bC_{\le\lambda}$, where it is zero since $\Delta(\lambda)$ is by definition projective in $\bC_{\le\lambda}$. So we consider the case $\mu\not\le\lambda$.

First we use assumption (iii$\ast$). Denote by $Q(\mu)$ the cokernel of $\nabla(\mu)\hookrightarrow I(\mu)$. Then $\Ext^1_{\bC}(\Delta(\lambda),\nabla(\mu))$ is a quotient of $\Hom(\Delta(\lambda),Q(\mu))$. The latter space is zero by $\mu\not\le\lambda$, Remark~\ref{RemQ}(3) and claim (1).

Now we use assumption (ii$\dagger$). Denote by $\Omega$ the ideal generated by $\lambda$ and $\mu$. The extension group can be calculated in $\bC_{\Omega}$. Since $\mu\not\le\lambda$, it follows that $\mu$ is maximal in $\Omega$ and hence $\nabla(\mu)$ is injective in $\bC_{\Omega}$, so the extension group is zero.
\end{proof}

\begin{lemma}\label{Lemb}
Assume (i$\dagger$)-(ii$\dagger$). For each $\lambda,\mu\in\Lambda$ we have:
\begin{enumerate}
\item $[\Delta(\lambda):L(\lambda)]=1$;
\item $\Ext^1_{\bC}(\nabla(\lambda),\nabla(\mu))=0$ unless $\lambda>\mu$;
\item $\Delta(\lambda)$ remains projective in $\bC_{\Omega}$, for each finite ideal $\Omega$ in which $\lambda$ is maximal.
\end{enumerate}
\end{lemma}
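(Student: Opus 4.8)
The three claims will all be deduced from Lemma~\ref{Lemab} together with assumption (ii$\dagger$), so essentially no new extension computations are needed; the work is entirely bookkeeping with composition factors and ideals.

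For (1), the plan is to reduce to a multiplicity count inside the finite category $\bC_{\le\lambda}$. Since $\le\lambda$ is finite and (i$\dagger$) supplies a projective cover and an injective hull for each simple, $\bC_{\le\lambda}$ is a finite abelian category, so for every $M\in\bC_{\le\lambda}$ the standard identity $[M:L(\lambda)]=\dim_k\Hom(M,\nabla(\lambda))$ holds, using $\End(L(\lambda))=k$ and that $\nabla(\lambda)$ is the injective hull of $L(\lambda)$ in $\bC_{\le\lambda}$. Applying this to $M=\Delta(\lambda)$ and invoking Lemma~\ref{Lemab}(1) gives $[\Delta(\lambda):L(\lambda)]=\dim_k\Hom(\Delta(\lambda),\nabla(\lambda))=1$.

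For (2), I would first shrink the ambient category. Set $\Omega:=(\le\lambda)\cup(\le\mu)$, a finite ideal, so that $\bC_\Omega$ is a Serre subcategory of $\bC$ containing both $\nabla(\lambda)$ and $\nabla(\mu)$; since Serre subcategories are closed under extensions, $\Ext^1_{\bC}(\nabla(\lambda),\nabla(\mu))=\Ext^1_{\bC_\Omega}(\nabla(\lambda),\nabla(\mu))$. A short combinatorial check shows that $\mu$ fails to be maximal in $\Omega$ precisely when some $\nu\le\lambda$ satisfies $\nu>\mu$, i.e.\ precisely when $\lambda>\mu$. Hence whenever $\lambda\not>\mu$ the element $\mu$ is maximal in $\Omega$, so by (ii$\dagger$) the object $\nabla(\mu)$ is injective in $\bC_\Omega$ and the extension group above vanishes.

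For (3), fix a finite ideal $\Omega$ in which $\lambda$ is maximal; then $\Delta(\lambda)\in\bC_{\le\lambda}\subseteq\bC_\Omega$. Because $\bC_\Omega$ is a finite length category, it suffices by d\'evissage on length to prove $\Ext^1_{\bC_\Omega}(\Delta(\lambda),L(\nu))=0$ for every $\nu\in\Omega$, and again this group agrees with $\Ext^1_{\bC}(\Delta(\lambda),L(\nu))$. I would feed $\Delta(\lambda)$ into the long exact sequence coming from the socle sequence $0\to L(\nu)\to\nabla(\nu)\to C(\nu)\to 0$, where $C(\nu):=\nabla(\nu)/L(\nu)$. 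Since $\Ext^1_{\bC}(\Delta(\lambda),\nabla(\nu))=0$ by Lemma~\ref{Lemab}(2), the group $\Ext^1_{\bC}(\Delta(\lambda),L(\nu))$ is a quotient of $\Hom(\Delta(\lambda),C(\nu))$. As $\nabla(\nu)$ has simple socle $L(\nu)$ with $[\nabla(\nu):L(\nu)]=1$, the composition factors of $C(\nu)$ are the $L(\rho)$ with $\rho<\nu$; on the other hand any nonzero map out of $\Delta(\lambda)$ has image with top $L(\lambda)$, so a nonzero element of $\Hom(\Delta(\lambda),C(\nu))$ would force $\lambda<\nu$, contradicting maximality of $\lambda$ in $\Omega$. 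Thus $\Hom(\Delta(\lambda),C(\nu))=0$, yielding the required vanishing and hence projectivity of $\Delta(\lambda)$ in $\bC_\Omega$. The genuinely delicate point is this last bookkeeping: one must simultaneously exploit that $\Delta(\lambda)$ has simple top $L(\lambda)$, that $C(\nu)$ involves only labels strictly below $\nu$, and the maximality of $\lambda$, while remaining careful that $\Ext^1$ may be computed interchangeably in $\bC$ and in the Serre subcategory $\bC_\Omega$; parts (1) and (2) are by comparison immediate once the reduction to a finite ideal and the identity $[M:L(\lambda)]=\dim_k\Hom(M,\nabla(\lambda))$ are in hand.
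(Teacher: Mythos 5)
Your proof is correct and follows essentially the same route as the paper: part (1) via $\dim_k\Hom(\Delta(\lambda),\nabla(\lambda))=1$ and multiplicity counting against the injective hull, part (2) by reducing to a finite ideal in which $\mu$ is maximal and invoking (ii$\dagger$), and part (3) via the long exact sequence attached to $0\to L(\nu)\to\nabla(\nu)\to C(\nu)\to 0$ together with Lemma~\ref{Lemab}(2) (the paper phrases this as a proof by contradiction with a single offending $\nu$, but the content is identical). One small caveat: your claim in (1) that (i$\dagger$) makes $\bC_{\le\lambda}$ a \emph{finite} abelian category is not justified at this stage --- that is essentially condition (i$\ast$), which the surrounding theorem is in the process of deducing, and (i$\dagger$) only provides an injective hull and projective cover for the top simple $L(\lambda)$, not for every $L(\mu)$ with $\mu\le\lambda$; fortunately your argument only uses that $\nabla(\lambda)$ is injective in the finite length category $\bC_{\le\lambda}$, which (i$\dagger$) does give, so nothing is lost.
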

\begin{proof}
The definition of $\nabla(\lambda)$, respectively $\Delta(\lambda)$, implies
$$[\Delta(\lambda):L(\lambda)]\;=\;\dim_k \Hom_{\bC_{\le\lambda}}(\Delta(\lambda),\nabla(\lambda))\;=\; [\nabla(\lambda):L(\lambda)],$$
which demonstrates claim (1). 
Claim (2) follows from \ref{DefHWC}(ii$\dagger$). 

If claim (3) were not true, then there exists $\nu$, with $\nu\not\le\lambda$ and $\nu\not\ge\lambda$ with
$$\Ext^1(\Delta(\lambda),L(\nu))\not=0.$$
If we denote the cokernel of $L(\nu)\hookrightarrow \nabla(\nu)$ by $Q$ we have an exact sequence
$$\Hom(\Delta(\lambda),Q)\;\to\;\Ext^1(\Delta(\lambda),L(\nu))\;\to\; \Ext^1(\Delta(\lambda),\nabla(\nu)).$$
By Lemma~\ref{Lemab}(2), the right term is zero. The left term is zero since $\lambda\not\le\nu$. This concludes the proof.
\end{proof}

\begin{lemma}[Humphreys-BGG reciprocity]\label{LemPf}
Assume (i$\dagger$)-(iii$\dagger$) and fix $\lambda\in\Lambda$. Define 
$$I_\Omega:=\Gamma_\Omega I(\lambda)\subset I(\lambda),$$ for each finite ideal $\Omega\subset\Lambda$.
For any finite ideal $\Omega'$ in which $\mu$ is maximal and $\Omega:=\Omega'\backslash\{\mu\}$, we have
$$I_{\Omega'}/I_\Omega\;\simeq\; \nabla(\mu)^{n_\mu},\quad\mbox{with }\;\,n_\mu:=[\Delta(\mu):L(\lambda)].$$
\end{lemma}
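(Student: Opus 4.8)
The plan is to induct on the cardinality of the finite ideal $\Omega'$ and to show that the top layer $W:=I_{\Omega'}/I_\Omega$ of the injective $I_{\Omega'}$ is a direct sum of copies of $\nabla(\mu)$. We may assume $\lambda\in\Omega$, since otherwise $I_{\Omega'}=0=\nabla(\mu)^{n_\mu}$. Throughout I work inside the finite length category $\bC_{\Omega'}$, in which $I_{\Omega'}=\Gamma_{\Omega'}I(\lambda)$ is the injective hull of $L(\lambda)$: it is injective because $\Gamma_{\Omega'}$ is right adjoint to an exact inclusion, and it has simple socle $L(\lambda)$. Since $\Omega\subset\Omega'$ we have $\Gamma_\Omega I_{\Omega'}=I_\Omega$, so $\Gamma_\Omega W=0$; consequently the socle of $W$ contains no composition factor in $\Omega$ and is a power of $L(\mu)$. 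The inductive hypothesis applied to the smaller ideals exhausting $\Omega$ provides a $\nabla$-flag of $I_\Omega$ with sections $\nabla(\kappa)$, $\kappa\in\Omega$.

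The key step is to prove that $W$ again admits a $\nabla$-flag, and here hypothesis \ref{DefHWC}(iii$\dagger$) is essential. Applying $\Hom(\Delta(\nu),-)$ to $0\to I_\Omega\to I_{\Omega'}\to W\to 0$ and using that $I_{\Omega'}$ is injective in $\bC_{\Omega'}$ yields $\Ext^1_{\bC_{\Omega'}}(\Delta(\nu),W)\simeq\Ext^2_{\bC_{\Omega'}}(\Delta(\nu),I_\Omega)$ for every $\nu\in\Omega'$. Dévissage along the $\nabla$-flag of $I_\Omega$ reduces these groups to the $\Ext^2(\Delta(\nu),\nabla(\kappa))$, which vanish by \ref{DefHWC}(iii$\dagger$); hence $\Ext^1_{\bC_{\Omega'}}(\Delta(\nu),W)=0$ for all $\nu$. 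The standard criterion in a finite highest weight category, which rests only on Lemma~\ref{Lemab}, then guarantees that $W$ has a $\nabla$-flag.

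It remains to identify the sections. For a $\nabla$-filtered object the multiplicity of $\nabla(\nu)$ equals $\dim_k\Hom(\Delta(\nu),W)$, again by Lemma~\ref{Lemab}. For $\nu\in\Omega$ the inclusion $I_\Omega\hookrightarrow I_{\Omega'}$ induces an isomorphism $\Hom(\Delta(\nu),I_\Omega)\xrightarrow{\sim}\Hom(\Delta(\nu),I_{\Omega'})$, both spaces having dimension $[\Delta(\nu):L(\lambda)]$ because $I_\Omega$, resp. $I_{\Omega'}$, is the injective hull of $L(\lambda)$ in $\bC_\Omega$, resp. $\bC_{\Omega'}$, and $\Delta(\nu)\in\bC_\Omega$; since moreover $\Ext^1_{\bC_{\Omega'}}(\Delta(\nu),I_\Omega)=\Ext^1_{\bC_\Omega}(\Delta(\nu),I_\Omega)=0$ (as $\bC_\Omega$ is closed under extensions and $I_\Omega$ is injective there), the long exact sequence forces $\Hom(\Delta(\nu),W)=0$. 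For $\nu=\mu$ one has $\Hom(\Delta(\mu),I_\Omega)=0$, a nonzero image being a quotient of $\Delta(\mu)$ with top $L(\mu)\notin\Omega$ lying in $\bC_\Omega$, and $\Ext^1_{\bC_{\Omega'}}(\Delta(\mu),I_\Omega)=0$ since $\Delta(\mu)$ is projective in $\bC_{\Omega'}$ by Lemma~\ref{Lemb}(3); hence $\Hom(\Delta(\mu),W)\simeq\Hom(\Delta(\mu),I_{\Omega'})$ has dimension $[\Delta(\mu):L(\lambda)]=n_\mu$.

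Thus the $\nabla$-flag of $W$ consists of exactly $n_\mu$ copies of $\nabla(\mu)$ and nothing else, and since $\Ext^1_{\bC}(\nabla(\mu),\nabla(\mu))=0$ by Lemma~\ref{Lemb}(2) (as $\mu\not>\mu$) the filtration splits, giving $W\simeq\nabla(\mu)^{n_\mu}$. I expect the main obstacle to be the flag-existence step: one must ensure that the controlling second extension groups are genuinely computed in the truncation $\bC_{\Omega'}$, so that \ref{DefHWC}(iii$\dagger$) (phrased in $\bC$) may be invoked, and one must have the $\nabla$-flag criterion available for $\bC_{\Omega'}$. Both points hinge on the fact that, under (i$\dagger$)--(iii$\dagger$), each $\bC_{\Omega'}$ is itself a genuine finite highest weight category, which is exactly what the surrounding development is establishing and must therefore be threaded through the induction with care.
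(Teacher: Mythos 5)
Your overall strategy --- induction on the finite ideal, identification of the socle of $W=I_{\Omega'}/I_\Omega$ as a power of $L(\mu)$, multiplicity counts via $\Hom(\Delta(\nu),-)$, and the use of (iii$\dagger$) together with the inductively known $\nabla$-flag of $I_\Omega$ to kill the obstruction $\Ext^2(\Delta(\nu),I_\Omega)$ --- is the same as the paper's. The genuine gap is the step where you produce a $\nabla$-flag of $W$, and it is a gap on three counts. First, the ``standard criterion'' ($\Ext^1(\Delta(\nu),-)=0$ for all $\nu$ implies a $\nabla$-flag) does not rest only on Lemma~\ref{Lemab}: every proof of it uses that the ambient category already has injective hulls with $\nabla$-flags (or projectives with $\Delta$-flags), which is precisely what Lemma~\ref{LemPf} is in the course of establishing, so invoking it for $\bC_{\Omega'}$ inside the induction is circular. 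Second, your d\'evissage needs $\Ext^2_{\bC_{\Omega'}}(\Delta(\nu),\nabla(\kappa))=0$, whereas (iii$\dagger$) only gives vanishing in $\bC$; the comparison $\Ext^2_{\bC_{\Omega'}}\to\Ext^2_{\bC}$ is nowhere established before Corollary~\ref{CorOmegaEF}, which is itself deduced from Lemma~\ref{LemPf}. (The map is in fact injective for Serre subcategories, so this half is repairable, but you would have to prove it.) Third, a priori $I_{\Omega'}=\Gamma_{\Omega'}I(\lambda)$ and $W$ live only in $\Ind\bC_{\Omega'}$, not in the finite length category $\bC_{\Omega'}$, so finiteness of $W$ is itself part of what must be proved. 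You flag the first two problems yourself, but flagging them is not resolving them. A smaller slip: ``we may assume $\lambda\in\Omega$'' discards the case $\lambda=\mu$, where $I_\Omega=0$ but $I_{\Omega'}\simeq\nabla(\lambda)\neq0$.

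The paper closes exactly this gap by exploiting the maximality of $\mu$ and avoiding any general flag criterion. By (ii$\dagger$), $\nabla(\mu)$ is injective in $\bC_{\Omega'}$; since your own computation gives that the socle of $W$ is $L(\mu)^{n_\mu}$, the object $W$ embeds into the injective hull $\nabla(\mu)^{n_\mu}$ of its socle, with cokernel $Q'$ (this also settles finiteness of $W$). One then shows $Q'=0$ from $\Hom(\Delta(\nu),Q')=0$ for all $\nu\in\Omega'$, which follows from the exact sequence
$$\Hom(\Delta(\nu),\nabla(\mu)^{n_\mu})\to\Hom(\Delta(\nu),Q')\to\Ext^1_{\bC}(\Delta(\nu),W),$$
Lemma~\ref{Lemab}(1), your multiplicity count, and the vanishing $\Ext^1_{\bC}(\Delta(\nu),W)=0$; the latter is obtained entirely inside $\bC$, where (iii$\dagger$) applies verbatim, from $\Ext^1_{\bC}(\Delta(\nu),I_{\Omega'})=0$ and $\Ext^2_{\bC}(\Delta(\nu),I_\Omega)=0$. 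With that replacement your argument closes up and coincides with the paper's.
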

\begin{proof}
We prove the statement in the lemma for triples $(\Omega',\Omega,\mu)$ of finite ideals $\Omega',\Omega$ with $\{\mu\}=\Omega'\backslash\Omega$, by induction along the inclusion order for the sets $\Omega'$. We take such a minimal $(\Omega',\Omega,\mu)$ for which the statement is not yet known. We set $Q:=I_{\Omega'}/I_\Omega$. 
By construction, we have $\Hom(L(\kappa),Q)=0$, for any $\kappa\not=\mu$ in $\Lambda$, from which it also follows that
$$\Hom(L(\mu),Q)\,\simeq\, \Hom(\Delta(\mu),Q).$$
Moreover, we have an exact sequence
$$0\to\Hom(\Delta(\mu),I_{\Omega'})\to \Hom(\Delta(\mu),Q)\to\Ext^1(\Delta(\mu),I_\Omega).$$
By our minimality assumption on $\Omega'$, we know that $I_\Omega$ has a $\nabla$-flag, so by Lemma~\ref{Lemab}(2) the extension group on the right is zero. Combining the statements in this paragraph, and the definition of $I_{\Omega'}$ as injective hull of $L(\lambda)$ in $\bC_{\Omega'}$, thus yields
$$\dim\Hom(L(\kappa),Q)=\begin{cases}
0&\mbox{if $\kappa\not=\mu$}\\
[\Delta(\mu):L(\lambda)]=n_\mu&\mbox{if $\kappa=\mu$.}\end{cases}$$

Therefore, we can embed $Q$ into the injective object $\nabla(\mu)^{n_\mu}$ in $\bC_{\Omega'}$ and we denote the cokernel by $Q'$. Furthermore, for any $\nu\in\Lambda$, we have the exact sequence
$$\Ext^1_{\bC}(\Delta(\nu),I_{\Omega'})\to\Ext^1_{\bC}(\Delta(\nu),Q)\to\Ext^2_{\bC}(\Delta(\nu), I_\Omega).$$
Since $I_\Omega$ has a $\nabla$-flag, assumption (iii$\dagger$) shows that the right term is zero.  If $\nu\in\Omega'$, also the left term is zero by definition of $I_{\Omega'}$. Hence, for every $\nu\in\Omega'$ we find $\Ext^1_{\bC}(\Delta(\nu),Q)=0$.  Using this, we find a short exact sequence
$$0\to\Hom(\Delta(\nu),Q)\to \Hom(\Delta(\nu), \nabla(\mu)^{n_\mu})\to\Hom(\Delta(\nu), Q')\to0,\quad\mbox{for every $\nu\in\Omega'$}.$$
By combining this with Lemma~\ref{Lemab}(1) we find $\Hom(\Delta(\nu), Q')=0$, for every $\nu\in\Omega'$ which demonstrates $Q'=0$, or $I_{\Omega'}/I_{\Omega}\simeq\nabla(\mu)^{n_\mu}$ as desired.
\end{proof}

\begin{proof}[Proof of Theorem~\ref{ThmDef}]
First we assume that conditions \ref{DefHWC}(i$\ast$)-(iii$\ast$) are satisfied. Then clearly \ref{DefHWC}(i$\dagger$) and the multiplicity 1 condition in \ref{DefHWC}(ii$\dagger$) are also satisfied.
Denote the quotient of $\nabla(\mu)\hookrightarrow I(\mu)$ by $Q(\mu)$. We then have an exact sequence
$$\Ext^1(\Delta(\lambda),Q(\mu))\;\to\;\Ext^2(\Delta(\lambda),\nabla(\mu))\;\to\;\Ext^2(\Delta(\lambda),I(\mu)).$$
Clearly the right term is zero. The left term is zero by Remark~\ref{RemQ}(3), Theorem~\ref{ThmInd}(2) and Lemma~\ref{Lemab}(2). Hence the extension group in the middle also vanishes and \ref{DefHWC}(iii$\dagger$) is satisfied.
To prove the injective property of $\nabla(\lambda)$ in (ii$\dagger$), it suffices to show that 
$$\Ext^1(L(\nu),\nabla(\lambda))=0, \quad\mbox{for $\nu\not\ge\lambda$ and $\nu\not\le\lambda$.}$$
This is indeed true, since the extension group is a quotient of $\Hom(L(\nu),Q(\lambda))$ which is zero itself by Remark~\ref{RemQ}(3). Hence \ref{DefHWC}(i$\dagger$)-(iii$\dagger$) are also satisfied.

Now assume that conditions \ref{DefHWC}(i$\dagger$)-(iii$\dagger$) are satisfied. Clearly also \ref{DefHWC}(ii$\ast$) is satisfied.
Let $(S, \subseteq)$ be the directed set of all finite ideals in $\Lambda$. Then we have
$$I(\lambda)\;\simeq\;\varinjlim_{\Omega \in S}I_\Omega,\qquad\mbox{with}\quad I_\Omega:=\Gamma_\Omega I(\lambda)\subset I(\lambda).$$
Lemma~\ref{LemPf} shows that \ref{DefHWC}(iii$\ast$) is satisfied.

To show that \ref{DefHWC}(i$\ast$) is satisfied, for any $\mu\le\lambda$ we need that $\Gamma_{\le \lambda}I(\mu)$ is contained in $\bC$. We have already demonstrated the stronger claim that $\Gamma_{\le \lambda}I(\mu)$ has a $\nabla$-flag.
\end{proof}

\begin{remark}\label{RemRW}
For $(\bC,\le)$ to be a `highest weight category' in \cite[Definition~2.1]{RW}, \ref{DefHWC}(i$\dagger$)-(iii$\dagger$) need to be satisfied, along with conditions \ref{Lemb}(1) and (3). These additional assumptions are thus redundant and \cite[Definition~2.1]{RW} is equivalent to Definition~\ref{DefHWC}.
\end{remark}

The following technical lemma will be useful later on. 

\begin{lemma}\label{LemBGG}
For a lfhw category $(\bC,\le)$ with $\mu\in\Lambda$ and a finite subset (not necessarily an ideal) $\Sigma\subset\Lambda$ which satisfies the properties
\begin{itemize}
\item $\Sigma$ contains $\le\mu$, and
\item $\Sigma\cap\{\nu\in\Lambda\,|\, [\nabla(\nu):L(\mu)]\not=0\}\;=\;\{\mu\}$,
\end{itemize}
we have
$$[\Delta(\mu):L(\lambda)]\;=\;[\Gamma_{\Sigma}I(\lambda):L(\mu)],\quad\mbox{for all $\lambda\in\Lambda$}.$$
\end{lemma}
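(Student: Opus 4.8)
The plan is to compute both sides of the asserted equality as the dimension of a single Hom-space, namely $\Hom_\bC(\Delta(\mu),\Gamma_\Sigma I(\lambda))$, connecting the two descriptions via the adjunction defining $\Gamma_\Sigma$ and the injective hull property of $I(\lambda)$. First I would record that $\Gamma_\Sigma I(\lambda)$ is of finite length: taking $\Omega$ to be the finite ideal generated by $\Sigma$ (finite since $\le$ is lower finite and $\Sigma$ is finite), one has $\Gamma_\Sigma I(\lambda)=\Gamma_\Sigma\Gamma_\Omega I(\lambda)=\Gamma_\Sigma I_\Omega$ because $\bC_\Sigma\subseteq\bC_\Omega$, and $I_\Omega=\Gamma_\Omega I(\lambda)$ has a $\nabla$-flag by the proof of Theorem~\ref{ThmDef}, so $\Gamma_\Sigma I(\lambda)$ is a subobject of a finite length object. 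Since the first hypothesis $\le\mu\subseteq\Sigma$ gives $\Delta(\mu)\in\bC_{\le\mu}\subseteq\bC_\Sigma$, the adjunction between $\Gamma_\Sigma$ and the inclusion yields $\Hom_\bC(\Delta(\mu),\Gamma_\Sigma I(\lambda))=\Hom_{\Ind\bC}(\Delta(\mu),I(\lambda))$; and because $I(\lambda)$ is the injective hull of $L(\lambda)$ (so $\Hom(L(\nu),I(\lambda))=\delta_{\nu\lambda}k$ and $\Hom(-,I(\lambda))$ is exact) the right-hand side has dimension $[\Delta(\mu):L(\lambda)]$. It thus remains to prove $\dim_k\Hom_\bC(\Delta(\mu),\Gamma_\Sigma I(\lambda))=[\Gamma_\Sigma I(\lambda):L(\mu)]$.

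This identity I would deduce from the stronger statement that $\dim_k\Hom(\Delta(\mu),Y)=[Y:L(\mu)]$ for every finite length $Y\in\bC_\Sigma$. Granting the vanishing $\Ext^1_\bC(\Delta(\mu),L(\nu))=0$ for all $\nu\in\Sigma$, a dévissage shows $\Ext^1_\bC(\Delta(\mu),Y)=0$ for all $Y\in\bC_\Sigma$, so $\Hom(\Delta(\mu),-)$ is exact on $\bC_\Sigma$; the multiplicity formula then follows by additivity along a composition series, using $\Hom(\Delta(\mu),L(\nu))=\delta_{\mu\nu}k$ (immediate, since $\Delta(\mu)$ has simple top $L(\mu)$ and $\End L(\mu)=k$).

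The main obstacle, and the only place the second hypothesis on $\Sigma$ enters, is precisely this vanishing $\Ext^1_\bC(\Delta(\mu),L(\nu))=0$ for $\nu\in\Sigma$. I would establish it by applying $\Hom(\Delta(\mu),-)$ to the short exact sequence $0\to L(\nu)\to\nabla(\nu)\to Q_\nu\to0$, where $Q_\nu:=\nabla(\nu)/L(\nu)$ (recall $\nabla(\nu)$ has simple socle $L(\nu)$). Since $\Ext^1_\bC(\Delta(\mu),\nabla(\nu))=0$ by Lemma~\ref{Lemab}(2), the group $\Ext^1_\bC(\Delta(\mu),L(\nu))$ is a quotient of $\Hom(\Delta(\mu),Q_\nu)$. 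Now $[Q_\nu:L(\mu)]=[\nabla(\nu):L(\mu)]-\delta_{\nu\mu}$ vanishes in every case: for $\nu=\mu$ because $[\nabla(\mu):L(\mu)]=1$, and for $\nu\in\Sigma\setminus\{\mu\}$ because the hypothesis $\Sigma\cap\{\kappa\mid[\nabla(\kappa):L(\mu)]\neq0\}=\{\mu\}$ forces $[\nabla(\nu):L(\mu)]=0$. As $\Delta(\mu)$ has simple top $L(\mu)$, any nonzero map $\Delta(\mu)\to Q_\nu$ would have image a nonzero quotient of $\Delta(\mu)$, whose top is again $L(\mu)$, exhibiting $L(\mu)$ as a composition factor of $Q_\nu$ — impossible. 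Hence $\Hom(\Delta(\mu),Q_\nu)=0$ and the desired vanishing follows.

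Combining these steps gives the chain $[\Gamma_\Sigma I(\lambda):L(\mu)]=\dim_k\Hom(\Delta(\mu),\Gamma_\Sigma I(\lambda))=\dim_k\Hom(\Delta(\mu),I(\lambda))=[\Delta(\mu):L(\lambda)]$, which is the claim. I expect no difficulty beyond the $\Ext^1$-vanishing above: the adjunction, the injective hull computation, and the passage from $\dim\Hom(\Delta(\mu),-)$ to $L(\mu)$-multiplicity are all standard once that vanishing is in hand, and the two hypotheses on $\Sigma$ are used exactly once each — the inclusion $\le\mu\subseteq\Sigma$ to place $\Delta(\mu)$ in $\bC_\Sigma$, and the orthogonality condition to kill $[Q_\nu:L(\mu)]$.
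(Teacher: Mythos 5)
Your proof is correct, and it takes a genuinely different route from the paper's. The paper deduces the identity from Humphreys--BGG reciprocity (Lemma~\ref{LemPf}): since $\Gamma_{\le\mu}I(\lambda)$ has a $\nabla$-flag in which $\nabla(\nu)$ occurs $[\Delta(\nu):L(\lambda)]$ times and only the factor $\nabla(\mu)$ contributes a copy of $L(\mu)$, one gets $[\Delta(\mu):L(\lambda)]=[\Gamma_{\le\mu}I(\lambda):L(\mu)]$ at once; the orthogonality hypothesis on $\Sigma$ is then used to show that the quotient $\Gamma_{\Sigma}I(\lambda)/\Gamma_{\le\mu}I(\lambda)$, which by left exactness of $\Gamma_\Sigma$ is filtered by subobjects of the $\Gamma_\Sigma\nabla(\nu)$ with $\nu\in\Sigma\setminus\{\mu\}$, contains no $L(\mu)$. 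You instead re-derive the reciprocity homologically: you identify both sides with $\dim_k\Hom(\Delta(\mu),\Gamma_\Sigma I(\lambda))$, using injectivity of $I(\lambda)$ plus the adjunction on one side (this is where your first hypothesis enters, placing $\Delta(\mu)$ in $\bC_\Sigma$), and on the other side the exactness of $\Hom(\Delta(\mu),-)$ on $\bC_\Sigma$, which you reduce via Lemma~\ref{Lemab}(2) to the vanishing of $\Hom(\Delta(\mu),\nabla(\nu)/L(\nu))$ for $\nu\in\Sigma$ --- exactly where the orthogonality hypothesis enters for you. Both arguments are sound and of comparable length; the paper's is purely a multiplicity count along $\nabla$-flags leaning on the already-established Lemma~\ref{LemPf}, while yours is self-contained modulo Lemma~\ref{Lemab}(2) and makes the role of the second hypothesis more transparent (it kills certain $\Ext^1$ groups rather than certain flag multiplicities). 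Your preliminary reduction to finite length via the ideal $\Omega$ generated by $\Sigma$ is needed for the d\'evissage and is correctly handled, though the $\nabla$-flag of $\Gamma_\Omega I(\lambda)$ is really the content of Lemma~\ref{LemPf} rather than of the proof of Theorem~\ref{ThmDef}.
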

\begin{proof}
We fix $\lambda\in\Lambda$ for the entire proof. By Lemma~\ref{LemPf} we have
 $$[\Delta(\mu):L(\lambda)]\,=\,[\Gamma_{\le \mu}I(\lambda):L(\mu)].$$
We denote the quotient of $\Gamma_{\le\mu}I(\lambda)\hookrightarrow \Gamma_{\Sigma}I(\lambda)$ by $Q$. Since $\Gamma_{\Sigma}$ is left exact and $\Gamma_{\Sigma}\nabla(\kappa)=0$ for $\kappa\not\in\Sigma$, Lemma~\ref{LemPf} implies that
$$[Q:L(\mu)]\,\le\,\sum_{\nu\in\Sigma\backslash\{\mu\}}n_{\nu}[\Gamma_{\Sigma}\nabla(\nu):L(\mu)]\,\le\,\sum_{\nu\in\Sigma\backslash\{\mu\}}n_{\nu}[\nabla(\nu):L(\mu)]\;=\;0.$$
This shows that $[\Gamma_{\le\mu}I(\lambda):L(\mu)]=[\Gamma_{\Sigma}I(\lambda):L(\mu)]$ and hence concludes the proof.
\end{proof}

\subsection{Serre subcategories and quotients} Fix a lfhw category $(\bC,\le)$.
We review some well-known homological properties of Serre subcategories and quotient categories of $\bC$. 

\begin{lemma}\label{LemExt}
Let $\Omega$ be a finite ideal in $\Lambda$ and $\mu\in\Lambda\backslash \Omega$. For any $M\in\bC_{\Omega}$ we have
$$\Ext^i_{\bC}(M,\nabla(\mu))\;=\;0,\qquad\mbox{for all $i\in\mN$}.$$
\end{lemma}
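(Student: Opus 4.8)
\emph{Plan of proof.} The plan is to reduce to the case of simple objects and then induct on the cohomological degree, exploiting the $\nabla$-flag structure of the injective hulls. Since every object of $\bC_\Omega$ has finite length with composition factors among the $L(\lambda)$, $\lambda\in\Omega$, and $\Ext^i_{\bC}(-,\nabla(\mu))$ is cohomological, a dévissage in the first variable (long exact sequences attached to the composition series) reduces the statement to showing $\Ext^i_{\bC}(L(\lambda),\nabla(\mu))=0$ for every $\lambda\in\Omega$ and all $i$. The key combinatorial observation is that, because $\Omega$ is an ideal and $\mu\notin\Omega$, we must have $\mu\not\le\lambda$ for each such $\lambda$ (otherwise $\mu\le\lambda$ would force $\mu\in\Omega$). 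I would therefore prove the slightly more general statement: if $\nu\not\le\lambda$, then $\Ext^i_{\bC}(L(\lambda),\nabla(\nu))=0$ for all $i\in\mN$.

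I will prove this by induction on $i$. Throughout I compute extension groups in $\Ind\bC$, which is legitimate by the extension fullness of Theorem~\ref{ThmInd}(1), and I use that $I(\nu)$ is injective in $\Ind\bC$. For $i=0$, the socle of $\nabla(\nu)$ is $L(\nu)$, so $\Hom(L(\lambda),\nabla(\nu))=0$ whenever $\lambda\ne\nu$, which holds since $\nu\not\le\lambda$ forces $\nu\ne\lambda$. For the inductive step, apply $\Ext^\bullet(L(\lambda),-)$ to the short exact sequence $0\to\nabla(\nu)\to I(\nu)\to Q(\nu)\to 0$ with $Q(\nu):=I(\nu)/\nabla(\nu)$. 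Injectivity of $I(\nu)$ kills $\Ext^j(L(\lambda),I(\nu))$ for $j\ge 1$, so for $i\ge 1$ the long exact sequence exhibits $\Ext^i(L(\lambda),\nabla(\nu))$ as a quotient of $\Ext^{i-1}(L(\lambda),Q(\nu))$.

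It then remains to show $\Ext^{i-1}(L(\lambda),Q(\nu))=0$. By Remark~\ref{RemQ}(3), $Q(\nu)$ is a filtered colimit of objects $Q_\alpha\in\bC$, each carrying a finite $\nabla$-flag with sections $\nabla(\rho)$ for $\rho\ge\nu$. The crucial point is that the order condition propagates: $\rho\ge\nu$ together with $\nu\not\le\lambda$ forces $\rho\not\le\lambda$. Hence, running a second (inner) dévissage along the finite $\nabla$-flag of each $Q_\alpha$ and invoking the inductive hypothesis at degree $i-1$ for every section (valid since $\rho\not\le\lambda$), I obtain $\Ext^{i-1}(L(\lambda),Q_\alpha)=0$ for all $\alpha$. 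Finally, Theorem~\ref{ThmInd}(2) identifies $\Ext^{i-1}(L(\lambda),Q(\nu))$ with $\colim_\alpha\Ext^{i-1}(L(\lambda),Q_\alpha)=0$, which closes the induction.

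The main obstacle, and the point requiring the most care, is the interplay of the two inductions together with the passage between $\bC$ and $\Ind\bC$: one must present $Q(\nu)$ as a filtered colimit of finitely $\nabla$-filtered objects, commute $\Ext$ with this colimit via Theorem~\ref{ThmInd}(2), and verify that the order relation $\rho\not\le\lambda$ is preserved for all sections that arise, so that the inductive hypothesis applies uniformly. Everything else is a routine chase of long exact sequences.
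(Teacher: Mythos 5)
Your argument is correct and follows essentially the same route as the paper: induction on $i$, the epimorphism $\Ext^{i-1}(-,Q(\mu))\tto\Ext^i(-,\nabla(\mu))$ coming from injectivity of $I(\mu)$, the presentation of $Q(\mu)$ from Remark~\ref{RemQ}(3), d\'evissage along the finite $\nabla$-flags, and Theorem~\ref{ThmInd}(2) to pass through the colimit. The only difference is your preliminary reduction to $M=L(\lambda)$ and the reformulation of the order condition as $\nu\not\le\lambda$; the paper runs the same induction directly for arbitrary $M\in\bC_\Omega$ and all $\mu\notin\Omega$ (the sections $\nabla(\nu)$ with $\nu\ge\mu$ automatically stay outside the ideal $\Omega$), so your first d\'evissage is harmless but superfluous.
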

\begin{proof}
The claim for $i=0$ is obvious. Now consider $i>0$ and assume we already know the claim for $i-1$ (and for all $\mu\not\in\Omega$). For $Q(\mu)$ the quotient of $\nabla(\mu)\hookrightarrow I(\mu)$, there is an epimorphism
$$\Ext^{i-1}(M,Q(\mu))\;\tto\; \Ext^i(M,\nabla(\mu)).$$
By Remark~\ref{RemQ}(3), $Q(\mu)$ is a direct limit of objects which have a finite filtration with all quotients of the form $\nabla(\nu)$ for $\nu\not\in\Omega$.
The left-hand is therefore zero by the induction hypothesis and Theorem~\ref{ThmInd}(2). This concludes the proof.
\end{proof}

\begin{corollary}\label{CorOmegaEF}
Let $\Omega$ be a finite ideal in $\Lambda$, then $\bC_{\Omega}$ is extension full in $\bC$.
\end{corollary}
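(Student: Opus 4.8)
The plan is to exhibit the objects $\Gamma_\Omega I(\mu)$, for $\mu\in\Omega$, as relative injectives that compute extensions both in $\bC_{\Omega}$ and in $\bC$; since the preceding remark warns that $\bC_{\Omega}\subset\bC$ typically fails property~\ref{Thm0}(1), I would \emph{not} route this through Theorem~\ref{Thm1}. First I would record that $\bC_{\Omega}$ is in fact a finite abelian category: for $\mu\in\Omega$, Lemma~\ref{LemPf} (applied with $\lambda=\mu$, exhausting the finite ideal $\Omega$ one maximal element at a time) shows that $\Gamma_\Omega I(\mu)$ carries a finite $\nabla$-flag with sections $\nabla(\nu)$, $\nu\in\Omega$, so it is of finite length, lies in $\bC_{\Omega}$, and is the injective hull of $L(\mu)$ there (it is injective in $\Ind\bC_{\Omega}$ because $\Gamma_\Omega$ is right adjoint to the exact inclusion and hence preserves the injective $I(\mu)$). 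Thus $\bC_{\Omega}$ has finitely many simples and enough injectives, the indecomposable injectives being the finite-length objects $\Gamma_\Omega I(\mu)$.

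The heart of the argument is the acyclicity
\[\Ext^j_{\bC}(M,\Gamma_\Omega I(\mu))=0\qquad\text{for all }j>0,\ \mu\in\Omega,\ M\in\bC_{\Omega}.\]
To prove it I would use the short exact sequence $0\to\Gamma_\Omega I(\mu)\to I(\mu)\to C\to 0$ with $C:=I(\mu)/\Gamma_\Omega I(\mu)$. Writing $I(\mu)\simeq\varinjlim_{\Omega'}\Gamma_{\Omega'}I(\mu)$ over the finite ideals $\Omega'\supseteq\Omega$ and invoking Lemma~\ref{LemPf} again, each quotient $\Gamma_{\Omega'}I(\mu)/\Gamma_\Omega I(\mu)$ has a finite $\nabla$-flag whose sections are $\nabla(\nu)$ with $\nu\in\Omega'\setminus\Omega$, hence $\nu\notin\Omega$; so $C$ is the filtered colimit of such objects. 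Lemma~\ref{LemExt} gives $\Ext^\bullet_{\bC}(M,\nabla(\nu))=0$ for every $\nu\notin\Omega$, and d\'evissage along the finite flags yields $\Ext^\bullet_{\bC}(M,\Gamma_{\Omega'}I(\mu)/\Gamma_\Omega I(\mu))=0$; Theorem~\ref{ThmInd}(2) then passes this vanishing through the colimit to give $\Ext^\bullet_{\bC}(M,C)=0$. Since $I(\mu)$ is injective in $\Ind\bC$ (and $\Ext_{\bC}=\Ext_{\Ind\bC}$ on objects of $\bC$ by Theorem~\ref{ThmInd}(1)), the long exact sequence for $\Ext_{\bC}(M,-)$ forces $\Ext^j_{\bC}(M,\Gamma_\Omega I(\mu))\cong\Ext^{j-1}_{\bC}(M,C)=0$ for $j>0$.

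With this acyclicity in hand I would conclude as follows. Choose an injective resolution $0\to N\to E^0\to E^1\to\cdots$ of $N$ inside $\bC_{\Omega}$, with each $E^i$ a finite direct sum of objects $\Gamma_\Omega I(\mu)$; this computes $\Ext^\bullet_{\bC_{\Omega}}(M,N)$. Viewed in $\bC$, the resolution is still exact, and each $E^i$ satisfies $\Ext^{>0}_{\bC}(M,E^i)=0$ by the previous paragraph (finite sums commute with $\Ext$), so $E^\bullet$ is a $\Hom_{\bC}(M,-)$-acyclic resolution and therefore also computes $\Ext^\bullet_{\bC}(M,N)$ (working in $\Ind\bC$, which has enough injectives, and using Theorem~\ref{ThmInd}(1)). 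Both computations use the identical complex $\Hom_{\bC_{\Omega}}(M,E^\bullet)=\Hom_{\bC}(M,E^\bullet)$, and the resulting comparison isomorphism is precisely $\iota^\bullet_{M,N}$ from \eqref{DefIota}. The main obstacle is the acyclicity of the middle paragraph: everything hinges on correctly identifying the $\nabla$-flag structure of $C$ so that only costandards $\nabla(\nu)$ with $\nu\notin\Omega$ appear, which is exactly what makes Lemma~\ref{LemExt} applicable.
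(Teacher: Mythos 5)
Your argument is correct and follows essentially the same route as the paper: both reduce the statement to the acyclicity $\Ext^{>0}_{\bC}(M,\Gamma_\Omega I(\mu))=0$ via the observation that $\bC_\Omega$ has enough injectives (the paper delegates this reduction to \cite[Proposition~4]{Sigma}, which you instead spell out), and both establish that acyclicity from the short exact sequence $0\to\Gamma_\Omega I(\mu)\to I(\mu)\to I(\mu)/\Gamma_\Omega I(\mu)\to0$ together with Lemmata~\ref{LemPf} and \ref{LemExt} and Theorem~\ref{ThmInd}. The only cosmetic imprecision is calling the connecting map an isomorphism at $j=1$ (it is a priori only an epimorphism from $\Hom_{\bC}(M,C)$), but since that group also vanishes the conclusion is unaffected.
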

\begin{proof}
By Theorem~\ref{ThmInd}(1), we can instead prove that $\bC_\Omega$ is extension full in $\Ind\bC$.
The category $\bC_{\Omega}$ has enough injective objects, since Lemma~\ref{LemPf} implies that $\Gamma_{\Omega}I(\mu)$ is in $\bC$ for each $\mu\in\Omega$. Furthermore, every injective object in $\bC_\Omega$ is a (finite) direct sum of injective envelopes of simple objects.
By \cite[Proposition~4]{Sigma}, it therefore suffices to show that $\Ext^i_{\bC}(M,\Gamma_{\Omega}I(\mu))=0$ for all $i>0$, $M\in\bC_\Omega$ and $\mu\in\Omega$. For each $i>0$ we have an epimorphism
$$\Ext^{i-1}_{\bC}(M,I(\mu)/\Gamma_{\Omega}I(\mu))\;\tto\;\Ext^i_{\bC}(M,\Gamma_{\Omega}I(\mu)).$$
The left term is zero, by application of Lemmata~\ref{LemPf} and \ref{LemExt}.
\end{proof}

\subsubsection{} For a finite ideal $\Omega\subset\Lambda$, we denote the Serre quotient $\bC/\bC_{\Omega}$ by $\bC^{\Omega}$. For example $\bC^\varnothing\simeq\bC$. By construction, the simple objects in $\bC^{\Omega}$ are labelled by $\Lambda\backslash \Omega$ and we denote the (lower finite) partial order on $\Lambda\backslash \Omega$ obtained by restriction of $\le$ again by $\le$.

\begin{lemma}\label{LemQuo}
The category $(\bC^\Omega,\le)$ is a lfhw category. For any $\lambda\in\Lambda\backslash \Omega$, the (co)standard objects are $\pi\Delta(\lambda)$ and $\pi\nabla(\lambda)$.\end{lemma}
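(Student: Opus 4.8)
The plan is to verify the $\dagger$-version of Definition~\ref{DefHWC} for $(\bC^\Omega,\le)$, since its conditions refer only to finite ideals and thus avoid constructing injective hulls in $\Ind\bC^\Omega$ directly. Throughout write $\pi\colon\bC\to\bC^\Omega$ for the quotient functor. The simple objects of $\bC^\Omega$ are the $\pi L(\lambda)$ with $\lambda\in\Lambda\setminus\Omega$, and the restriction of $\le$ to $\Lambda\setminus\Omega$ is again lower finite since $\{\mu\in\Lambda\setminus\Omega\mid\mu\le\lambda\}\subseteq\,\le\lambda$. The basic structural input is that, for any finite ideal $\Psi\subseteq\Lambda\setminus\Omega$, the Serre subcategory $\bC^\Omega_\Psi$ generated by $\{\pi L(\mu)\mid\mu\in\Psi\}$ is canonically equivalent to the Serre quotient $\bC_{\Psi\cup\Omega}/\bC_\Omega$ (here $\Psi\cup\Omega$ is a finite ideal of $\Lambda$, and restricting a quotient functor to a Serre subcategory $\bD$ realises $\bD/(\bD\cap\bC_\Omega)$ as a Serre subcategory of $\bC^\Omega$). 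Since $\bC_{\Psi\cup\Omega}$ is finite and the quotient of a finite category by a Serre subcategory is again finite (realise it as $\operatorname{mod}eAe$ for a suitable idempotent $e$), each $\bC^\Omega_\Psi$ is finite abelian; this yields (i$\dagger$).

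I would next identify the (co)standard objects. Fix $\lambda\in\Lambda\setminus\Omega$ and work in $\bC_{\le\lambda}/\bC_{\Omega\cap\le\lambda}\simeq\bC^\Omega_{\le\lambda}$. The object $\nabla(\lambda)$ is injective in $\bC_{\le\lambda}$ and is $\bC_{\Omega\cap\le\lambda}$-closed: $\Hom(T,\nabla(\lambda))=0$ for such $T$ because the socle $L(\lambda)$ lies outside $\Omega$, while $\Ext^1_{\bC_{\le\lambda}}(T,\nabla(\lambda))=0$ by injectivity. A splitting argument (the quotient functor has a section functor preserving injectives, and an injective hull of $\pi\nabla(\lambda)$ pulls back to a split monomorphism out of the injective object $\nabla(\lambda)$) then shows that $\pi\nabla(\lambda)$ is injective in $\bC^\Omega_{\le\lambda}$, with simple socle $\pi L(\lambda)$ since the section functor recovers $\nabla(\lambda)$ and hence $\Hom(\pi L(\nu),\pi\nabla(\lambda))=\Hom(L(\nu),\nabla(\lambda))=\delta_{\nu\lambda}k$; thus $\pi\nabla(\lambda)=\nabla^\Omega(\lambda)$. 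Dually, $\Delta(\lambda)$ is projective and $\bC_{\Omega\cap\le\lambda}$-co-closed (its head $L(\lambda)$ lies outside $\Omega$ and $\Ext^1_{\bC_{\le\lambda}}(\Delta(\lambda),T)=0$), so the same argument applied to the left adjoint of $\pi$ gives $\pi\Delta(\lambda)=\Delta^\Omega(\lambda)$.

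It remains to verify (ii$\dagger$) and (iii$\dagger$). The multiplicity $[\pi\nabla(\lambda):\pi L(\lambda)]=[\nabla(\lambda):L(\lambda)]=1$ is immediate from exactness of $\pi$. For the injectivity of $\nabla^\Omega(\lambda)=\pi\nabla(\lambda)$ in $\bC^\Omega_{\Omega'}$, where $\Omega'$ is a finite ideal of $\Lambda\setminus\Omega$ in which $\lambda$ is maximal, observe that $\Omega'\cup\Omega$ is a finite ideal of $\Lambda$ in which $\lambda$ is still maximal (no element of $\Omega$ can exceed $\lambda$, as $\Omega$ is downward closed and $\lambda\notin\Omega$); condition (ii$\dagger$) for $\bC$ makes $\nabla(\lambda)$ injective in $\bC_{\Omega'\cup\Omega}$, and the closed-injective argument applied to $\bC_{\Omega'\cup\Omega}/\bC_\Omega\simeq\bC^\Omega_{\Omega'}$ transports this to $\pi\nabla(\lambda)$. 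Finally, for (iii$\dagger$) I would establish the comparison isomorphism
$$\Ext^i_{\bC^\Omega}(\pi M,\pi\nabla(\mu))\;\cong\;\Ext^i_{\bC}(M,\nabla(\mu)),\qquad M\in\bC,\ \mu\in\Lambda\setminus\Omega,$$
and then read off $\Ext^2_{\bC^\Omega}(\pi\Delta(\lambda),\pi\nabla(\mu))\cong\Ext^2_{\bC}(\Delta(\lambda),\nabla(\mu))=0$ from (iii$\dagger$) for $\bC$. The comparison rests on Lemma~\ref{LemExt}, which says $\nabla(\mu)$ is $\bC_\Omega$-acyclic, i.e.\ $\Ext^i_{\bC}(T,\nabla(\mu))=0$ for all $T\in\bC_\Omega$ and $i\in\mN$. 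Passing to $\Ind\bC$ (where extensions agree by Theorem~\ref{ThmInd}(1) and enough injectives exist), one resolves $\nabla(\mu)$ by injectives; since $\nabla(\mu)$ is torsion-free and acyclic, an induction along the resolution shows every term is a $\bC_\Omega$-closed injective, so applying $\pi$ yields an injective resolution of $\pi\nabla(\mu)$ in $\Ind\bC^\Omega\simeq\Ind\bC/\langle\bC_\Omega\rangle$ on which $\Hom_{\Ind\bC^\Omega}(\pi M,-)$ restricts to $\Hom_{\Ind\bC}(M,-)$, giving the claim.

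The main obstacle is this last comparison isomorphism: identifying $\Ind\bC^\Omega$ with the Gabriel localization $\Ind\bC/\langle\bC_\Omega\rangle$, producing the section functor, and checking that the injective resolution of $\nabla(\mu)$ stays inside the $\bC_\Omega$-closed injectives so that $\pi$ carries it to a resolution computing the right-hand $\Ext$. By contrast, the finite-level identifications of $\pi\nabla(\lambda)$ and $\pi\Delta(\lambda)$ in the second paragraph are routine once the ``quotient functor preserves closed injectives'' lemma is in place, and the reductions in (i$\dagger$)--(ii$\dagger$) only use that $\Omega$ is a lower set together with the corresponding properties of $\bC$.
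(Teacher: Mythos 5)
Your proposal is correct, and for conditions (i$\dagger$) and (ii$\dagger$) it essentially coincides with the paper's argument: the paper also verifies the $\dagger$-axioms and invokes the (standard) fact that the quotient functor $\pi$ carries projective covers and injective hulls of simples outside the kernel to projective covers and injective hulls in the quotient, which is exactly your ``closed injective / co-closed projective'' transport. The real divergence is in (iii$\dagger$). You prove the full derived comparison $\Ext^i_{\bC^\Omega}(\pi M,\pi\nabla(\mu))\cong\Ext^i_{\bC}(M,\nabla(\mu))$ by identifying $\Ind\bC^\Omega$ with the Gabriel localization of $\Ind\bC$, taking a minimal injective resolution of $\nabla(\mu)$ there, and using Lemma~\ref{LemExt} to show all cosyzygies are torsion-free so that $\pi$ carries the resolution to an injective resolution of $\pi\nabla(\mu)$. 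The paper instead stays entirely at the level of finite ideals: a nonzero $\Ext^2_{\bC^\Omega}(\Delta(\lambda),\nabla(\mu))$ must already live in some $\bC^\Omega_\Pi$ with $\Pi\supset\Omega$ finite; by Lemma~\ref{LemPf} the cokernel of $\nabla(\mu)$ in its injective hull in $\bC_\Pi$ has a $\nabla$-flag, hence so does its image in $\bC^\Omega_\Pi$, and a single dimension shift combined with $\Ext^1_{\bC^\Omega}(\Delta(\lambda),\nabla(\nu))=0$ (Lemma~\ref{Lemab}(2), available once (i$\dagger$)--(ii$\dagger$) are known for $\bC^\Omega$) gives the contradiction. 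Your route buys a stronger statement (an isomorphism in all degrees, not just vanishing of $\Ext^2$) at the cost of importing the section functor, the identification $\Ind(\bC/\bC_\Omega)\simeq(\Ind\bC)/\langle\bC_\Omega\rangle$, and the behaviour of torsion-free injectives under localization -- machinery the paper deliberately avoids; these facts are standard but you should either cite them (e.g.\ \cite{Gabriel}) or note that the minimal resolution is needed to guarantee the terms are torsion-free. The paper's route is shorter and self-contained but yields only what is needed for the axiom.
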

\begin{proof}
We have a bijection between lower finite ideals in $\Lambda\backslash \Omega$ and lower finite ideals in $\Lambda$ containing $\Omega$.
For such an ideal $\Pi\supset \Omega$, we denote by $\bC^\Omega_{\Pi}$ the corresponding Serre subcategory of $\bC^\Omega$, or equivalently the relevant Serre quotient of $\bC_\Pi$.

We will prove that the $\dagger$-version of Definition~\ref{DefHWC} is applicable to $\bC^\Omega$. For any Serre subcategory $\bB\subset\bA$, the exact functor $\pi:\bA\to\bA/\bB$ sends a projective cover (resp. injective hull) of a simple object not contained in $\bB$ to the corresponding projective cover (resp. injective hull) in $\bB/\bA$.
 Conditions (i$\dagger$)-(ii$\dagger$) for $\bC^\Omega$ then follow. Next assume that 
 $$\Ext^2_{\bC^{\Omega}}(\Delta(\lambda),\nabla(\mu))\,\not=\,0,\quad\mbox{for some $\lambda,\mu\in\Lambda\backslash \Omega$.}$$
 Since we can compute the extension group in $\bC^\Omega$, rather than $\Ind\bC^\Omega$, it follows immediately that there must be a non-trivial extension which exists in $\bC^\Omega_{\Pi}$ for a finite ideal $\Pi$ containing $\Omega$. 
 However, based on Lemma~\ref{LemPf} the cokernel of the embedding of $\nabla(\mu)$ into its injective envelope in $\bC_{\Pi}$ has a $\nabla$-flag. The same therefore follows for $\nabla(\mu)$ in $\bC_{\Pi}^\Omega$. However, we have 
 $$\Ext^1_{\bC^{\Omega}}(\Delta(\lambda),\nabla(\nu))\,=\,0,\quad\mbox{for all $\lambda,\nu\in\Lambda$,}$$
 by Lemma~\ref{Lemab}(2), a contradiction.
 \end{proof}

\section{Uniqueness of lower finite highest weight structures}
\label{SecUniq}
\subsection{The essential order}

\begin{definition}

Let $\bC$ be a finite length category with lower finite partial orders $\le$ and $\le'$ on $\Lambda$ such that $(\bC,\le)$ and $(\bC,\le')$ are lfhw categories. The two highest weight structures are {\bf equivalent} if the costandard objects in $(\bC,\le)$ and $(\bC,\le')$ are isomorphic.
\end{definition}

\begin{lemma}\label{Lempext}
For a lfhw category $(\bC,\le)$, let  $\le'$ be a lower finite extension of the partial order $\le$ on $\Lambda$. Then $(\bC,\le')$ is a lfhw category and the highest weight structures $(\bC,\le)$ and $(\bC,\le')$ are equivalent.
\end{lemma}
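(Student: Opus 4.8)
The plan is to show that the costandard objects are literally unchanged by the passage from $\le$ to $\le'$, and then to read off all the axioms of Definition~\ref{DefHWC} for $(\bC,\le')$ from those for $(\bC,\le)$. The starting observation is that, since $\le'$ extends $\le$, every down-set for $\le'$ is also a down-set for $\le$: if $\kappa\le\nu$ and $\nu\le'\lambda$ then $\kappa\le'\nu\le'\lambda$. In particular each set $\le'\lambda$ is a \emph{finite} $\le$-ideal. First I would use this to settle \ref{DefHWC}(i$\ast$) for $\le'$: applying Humphreys--BGG reciprocity (Lemma~\ref{LemPf}, whose $\dagger$-hypotheses are available through Theorem~\ref{ThmDef}) to the finite $\le$-ideal $\le'\lambda$ shows that $\Gamma_{\le'\lambda}I(\mu)$ lies in $\bC$ and carries a finite $\nabla$-flag in which $\nabla(\nu)$ occurs with multiplicity $[\Delta(\nu):L(\mu)]$, for every $\mu\in\;\le'\lambda$. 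Hence $\bC_{\le'\lambda}$ is finite abelian.

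The crux is to identify the $\le'$-costandard object $\nabla'(\lambda):=\Gamma_{\le'\lambda}I(\lambda)$ with $\nabla(\lambda)=\Gamma_{\le\lambda}I(\lambda)$. Since $\le\lambda\subseteq\;\le'\lambda$ gives $\bC_{\le\lambda}\subseteq\bC_{\le'\lambda}$, there is a canonical inclusion $\nabla(\lambda)\hookrightarrow\nabla'(\lambda)$ of objects of $\bC$, so it suffices to compare composition multiplicities. By the $\nabla$-flag just described,
$$[\nabla'(\lambda):L(\kappa)]\;=\;\sum_{\nu\le'\lambda}[\Delta(\nu):L(\lambda)]\,[\nabla(\nu):L(\kappa)],$$
and similarly for $\nabla(\lambda)$ with the sum restricted to $\nu\le\lambda$. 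The two sums agree because every extra index $\nu$ (those with $\nu\le'\lambda$ but $\nu\not\le\lambda$) contributes nothing: $[\Delta(\nu):L(\lambda)]\neq0$ forces $\lambda\le\nu$, since $\Delta(\nu)\in\bC_{\le\nu}$, which together with $\nu\le'\lambda$ and antisymmetry of $\le'$ gives $\nu=\lambda$, contradicting $\nu\not\le\lambda$. Thus $\nabla(\lambda)\hookrightarrow\nabla'(\lambda)$ is a monomorphism of objects of equal finite length, hence an isomorphism, and in fact $\nabla'(\lambda)=\nabla(\lambda)$. This is the step I expect to be the main point of the argument; the rest is bookkeeping.

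With the identification $\nabla'=\nabla$ in hand the remaining axioms are immediate. Condition \ref{DefHWC}(ii$\ast$) holds since $[\nabla'(\lambda):L(\lambda)]=[\nabla(\lambda):L(\lambda)]=1$. For \ref{DefHWC}(iii$\ast$) I would reuse the very filtration provided by the $\le$-structure: writing $I(\lambda)=\cup_\alpha I_\alpha\simeq\varinjlim_\alpha I_\alpha$ with each $I_\alpha$ having a finite filtration by objects $\nabla(\mu)=\nabla'(\mu)$ with $\mu\ge\lambda$, we note that $\mu\ge\lambda$ implies $\mu\ge'\lambda$, so these are exactly $\nabla'$-flags with indices $\ge'\lambda$, as required. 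Therefore $(\bC,\le')$ satisfies \ref{DefHWC}(i$\ast$)--(iii$\ast$) and is a lfhw category, and since its costandard objects coincide with those of $(\bC,\le)$ the two highest weight structures are equivalent. The only genuine obstacle is the multiplicity comparison above; everything else follows from the observation that $\le'$-ideals are $\le$-ideals together with the already-established finite-ideal consequences of the $\le$-structure.
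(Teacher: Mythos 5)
Your proof is correct, but it takes a different route from the paper's. Both arguments begin from the same key observation -- that every finite $\le'$-ideal is a finite $\le$-ideal with the same maximal elements -- but you then verify the $\ast$-version of Definition~\ref{DefHWC}, identifying $\Gamma_{\le'\lambda}I(\lambda)$ with $\nabla(\lambda)$ by an iterated application of Lemma~\ref{LemPf} and a composition-multiplicity count showing the extra indices $\nu\le'\lambda$, $\nu\not\le\lambda$ contribute nothing. The paper instead verifies the $\dagger$-version: condition \ref{DefHWC}(ii$\dagger$) together with Lemma~\ref{Lemb}(3), applied to the finite $\le$-ideal $\le'\lambda$ in which $\lambda$ is $\le$-maximal, immediately exhibits $\nabla(\lambda)$ and $\Delta(\lambda)$ as the injective hull and projective cover of $L(\lambda)$ in $\bC_{\le'\lambda}$, so the (co)standard objects are unchanged without any multiplicity computation; conditions (ii$\dagger$) and (iii$\dagger$) for $\le'$ are then inherited verbatim. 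Your approach buys an explicit character formula for $\Gamma_{\le'\lambda}I(\lambda)$ and a concrete handle on the $\nabla$-flags appearing in (iii$\ast$), at the cost of invoking the reciprocity machinery; the paper's argument is shorter because the $\dagger$-axioms are stated directly in terms of the injectivity/projectivity properties that are stable under enlarging the order. Both are valid, and your identification of the comparison of $\nabla(\lambda)\hookrightarrow\Gamma_{\le'\lambda}I(\lambda)$ as the crux is accurate.
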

\begin{proof}
We use the $\dagger$-version of Definition~\ref{DefHWC}. Since $\le'$ extends $\le$ and is lower finite, the set $\Omega$ defined as $\le'\lambda$ is a finite ideal in $(\Lambda,\le)$ and $\lambda$ is maximal in $\Omega$ under $\le$. Condition \ref{DefHWC}(ii$\dagger$) and Lemma~\ref{Lemb}(3) demonstrate that $\Delta(\lambda)$ and $\nabla(\lambda)$ are the projective cover and injective hull of $L(\lambda)$ in $\bC_{\le'\lambda}$. Since every finite ideal in $(\Lambda,\le')$ is in particular a finite ideal in $(\Lambda,\le)$ condition \ref{DefHWC}(ii$\dagger$) for $(\bC,\le')$ follows from the fact it holds for $(\bC,\le)$. Finally, condition~\ref{DefHWC}(iii$\dagger$) is immediately inherited. Hence $(\bC,\le')$ is a lfhw category and the equivalence claim is immediate.
\end{proof}

The lemma motivates the following definition, see \cite[Definition~1.2.5]{blocks}.

\begin{definition}\label{Defess}
For a lfhw category $(\bC,\le)$, the {\bf essential order} $\le^e$ is the partial order on $\Lambda$ generated by the two relations
$$[\Delta(\lambda):L(\mu)]\not=0\;\Rightarrow\; \mu\le^e\lambda\quad\mbox{and}\quad [\nabla(\lambda):L(\mu)]\not=0\;\Rightarrow \;\mu\le^e\lambda.$$
\end{definition}
Clearly, $\le$ is an extension of $\le^e$, so in particular, $\le^e$ is lower finite.

\begin{lemma}\label{Lemeo}
For a lfhw category $(\bC,\le)$, the pair $(\bC,\le^e)$ is a lfhw category, equivalent to $(\bC,\le)$.
\end{lemma}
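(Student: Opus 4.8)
The plan is to show that $(\bC,\le^e)$ satisfies Definition~\ref{DefHWC} and that its (co)standard objects coincide with those of $(\bC,\le)$, which immediately gives the equivalence. Since $\le$ extends $\le^e$ (as noted right after Definition~\ref{Defess}), the partial order $\le^e$ is lower finite, so the statement at least makes sense. The key conceptual point is that $\le^e$ is the \emph{coarsest} order for which the given (co)standard objects are compatible, so we should expect $(\bC,\le^e)$ to be a lfhw category with the \emph{same} $\Delta$ and $\nabla$, and then Lemma~\ref{Lempext} will essentially close the loop.

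First I would verify that the candidate (co)standard objects for $(\bC,\le^e)$ are the old $\Delta(\lambda)$ and $\nabla(\lambda)$. The subtlety is that the subcategory $\bC_{\le^e\lambda}$ may be strictly smaller than $\bC_{\le\lambda}$, so I must check that $\nabla(\lambda)=\Gamma_{\le\lambda}I(\lambda)$ actually lies in $\bC_{\le^e\lambda}$ and is the injective hull of $L(\lambda)$ there, with the analogous statement for $\Delta(\lambda)$ as projective cover. This should follow from Definition~\ref{Defess}: every composition factor $L(\mu)$ of $\nabla(\lambda)$ satisfies $\mu\le^e\lambda$ by construction, and similarly for $\Delta(\lambda)$. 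Hence $\nabla(\lambda),\Delta(\lambda)\in\bC_{\le^e\lambda}$, and the multiplicity-one condition $[\nabla(\lambda):L(\lambda)]=1$ is inherited verbatim. That $\nabla(\lambda)$ is injective in $\bC_{\le^e\lambda}$ (and remains so in larger finite ideals, per (ii$\dagger$)) should follow because any finite ideal $\Omega$ for $\le^e$ in which $\lambda$ is maximal is also a finite ideal for $\le$, so (ii$\dagger$) for $(\bC,\le)$ transfers directly.

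The main step, and the place I expect the real work to sit, is condition \ref{DefHWC}(iii$\dagger$), namely $\Ext^2_{\bC}(\Delta(\lambda),\nabla(\mu))=0$ for all $\mu$. Because the extension groups are computed intrinsically in $\bC$ (they do not depend on the order at all), and the objects $\Delta(\lambda),\nabla(\mu)$ are literally the same as before, this condition is simply \emph{inherited} from $(\bC,\le)$. The same remark handles (iii$\dagger$) for free; the genuine content is only the re-identification of (co)standard objects in the smaller Serre subcategories. Once (i$\dagger$)--(iii$\dagger$) are checked, $(\bC,\le^e)$ is a lfhw category by Theorem~\ref{ThmDef}.

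Finally, for the equivalence: since we have shown the costandard objects of $(\bC,\le^e)$ are exactly the $\nabla(\lambda)$ of $(\bC,\le)$, the two structures are equivalent by definition. Alternatively, and perhaps more cleanly, one observes that $\le$ is a lower finite extension of $\le^e$, so Lemma~\ref{Lempext} applies directly to give that $(\bC,\le^e)$ is a lfhw category equivalent to $(\bC,\le)$ in one stroke—\emph{provided} one already knows $(\bC,\le^e)$ is a lfhw category, which is the only thing Lemma~\ref{Lempext} does not supply on its own. I therefore expect the cleanest writeup to establish the lfhw property of $(\bC,\le^e)$ via the inheritance arguments above, and then invoke Lemma~\ref{Lempext} (with the roles $\le^e \rightsquigarrow \le$) to obtain equivalence with minimal extra effort.
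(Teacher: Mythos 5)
Your reduction of the problem is mostly right: (i$\dagger$), the multiplicity condition, and (iii$\dagger$) are indeed inherited essentially for free, and once $(\bC,\le^e)$ is known to be lfhw, the equivalence does follow from Lemma~\ref{Lempext} applied to the extension $\le$ of $\le^e$. But there is a genuine gap at exactly the point you wave through: the claim that ``any finite ideal $\Omega$ for $\le^e$ in which $\lambda$ is maximal is also a finite ideal for $\le$'' has the logic backwards. Since $\le$ \emph{extends} $\le^e$ (it has more relations), a subset closed under going down in $\le^e$ need not be closed under going down in $\le$: if $\mu\le\lambda$ but $\mu\not\le^e\lambda$, then the set $\le^e\lambda$ is a finite ideal for $\le^e$ that is not an ideal for $\le$. (The correct direction of this containment is precisely what makes Lemma~\ref{Lempext} easy, where one passes to a \emph{finer} order.) So condition (ii$\dagger$) for $(\bC,\le^e)$ does not transfer directly; there are strictly more finite ideals to check, and this is where the real work of the lemma sits.

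The paper closes this gap as follows: for any $\lambda,\mu\in\Lambda$, the short exact sequence $K(\mu)\hookrightarrow\Delta(\mu)\tto L(\mu)$ gives an exact sequence
$$\Hom(K(\mu),\nabla(\lambda))\to\Ext^1(L(\mu),\nabla(\lambda))\to\Ext^1(\Delta(\mu),\nabla(\lambda)),$$
whose right term vanishes by Lemma~\ref{Lemab}(2) and whose left term vanishes unless $[K(\mu):L(\lambda)]\not=0$, i.e.\ unless $\lambda<^e\mu$. Hence $\Ext^1(L(\mu),\nabla(\lambda))=0$ whenever $\lambda\not<^e\mu$, which gives injectivity of $\nabla(\lambda)$ in $\bC_\Omega$ for \emph{every} finite $\le^e$-ideal $\Omega$ in which $\lambda$ is maximal. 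You should replace your ``transfers directly'' sentence with an argument of this kind; the rest of your outline then goes through.
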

\begin{proof}
We use the $\dagger$-version of Definition~\ref{DefHWC}. Since $\Delta(\lambda)$ and $\nabla(\lambda)$ belong to the Serre subcategory $\bC_{\le^e\lambda}$ of $\bC_{\le\lambda}$ they are the projective cover and injective hull of $L(\lambda)$ in $\bC_{\le^e\lambda}$. To prove that $(\bC,\le^e)$ is a lfhw category equivalent to $(\bC,\le)$ it suffices to prove that $\nabla(\lambda)$ is injective in every $\bC_{\Omega}$, for $\Omega$ a finite ideal in $(\Lambda,\le^e)$. For any $\lambda,\mu\in\Lambda$, the short exact sequence $K(\mu)\hookrightarrow\Delta(\mu)\tto L(\mu)$ yields an exact sequence
$$\Hom(K(\mu),\nabla(\lambda))\to\Ext^1(L(\mu),\nabla(\lambda))\to\Ext^1(\Delta(\mu),\nabla(\lambda)).$$
 Lemma~\ref{Lemab}(2) implies the right term is zero. The left term is zero unless $[K(\mu):L(\lambda)]\not=0$. The middle term will therefore always be zero unless $\lambda<^e\mu$. This demonstrates the requested injective property of $\nabla(\lambda)$.
 \end{proof}

We can summarise the above in the following generalisation of \cite[Lemma~1.2.6]{blocks}.
\begin{prop}\label{PropEquiv}Let $\bC$ be a finite length category. Assume that $(\bC,\le_1)$ and $(\bC,\le_2)$ are lfhw categories for lower finite partial orders $\le_1$ and $\le_2$ on $\Lambda$. The following are equivalent:
\begin{enumerate}
\item The two highest weight structures are equivalent.
\item The two essential orders $\le_1^e$ and $\le^e_2$ are the same.
\item We have $[\nabla_1(\lambda)]=[\nabla_2(\lambda)]$ in $K_0(\bC)$ for all $\lambda\in\Lambda$.
\end{enumerate}
\end{prop}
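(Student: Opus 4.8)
The plan is to prove the three conditions equivalent cyclically, $(1)\Rightarrow(3)\Rightarrow(2)\Rightarrow(1)$, with only $(3)\Rightarrow(2)$ requiring real work. Two of the implications are formal. For $(1)\Rightarrow(3)$: equivalence of the two structures means $\nabla_1(\lambda)\simeq\nabla_2(\lambda)$ as objects of $\bC$, so a fortiori $[\nabla_1(\lambda)]=[\nabla_2(\lambda)]$ in $K_0(\bC)$. For $(2)\Rightarrow(1)$: Lemma~\ref{Lemeo} gives that $(\bC,\le_i)$ is equivalent to $(\bC,\le_i^e)$ for $i\in\{1,2\}$; if $\le_1^e=\le_2^e$, then $(\bC,\le_1^e)$ and $(\bC,\le_2^e)$ are literally the \emph{same} highest weight structure and hence have identical costandard objects, so transitivity of the relation ``having isomorphic costandard objects'' yields that $(\bC,\le_1)$ and $(\bC,\le_2)$ are equivalent.

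For the main implication $(3)\Rightarrow(2)$, I would first note that since $K_0(\bC)$ is free on the classes $[L(\mu)]$, the hypothesis is equivalent to the equality of costandard decomposition numbers $[\nabla_1(\lambda):L(\mu)]=[\nabla_2(\lambda):L(\mu)]$ for all $\lambda,\mu$. It then suffices to prove the analogous equality for the standard objects, $[\Delta_1(\mu):L(\lambda)]=[\Delta_2(\mu):L(\lambda)]$; once all four families of decomposition numbers agree, the essential orders $\le_1^e$ and $\le_2^e$, being generated by the same relations via Definition~\ref{Defess}, must coincide, which is exactly (2). To compare the standard multiplicities I would exploit the BGG-type identity of Lemma~\ref{LemBGG}, namely $[\Delta(\mu):L(\lambda)]=[\Gamma_\Sigma I(\lambda):L(\mu)]$ for a suitable finite $\Sigma$. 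The decisive point is that the right-hand side is \emph{intrinsic} to $\bC$: both the injective hull $I(\lambda)$ in $\Ind\bC$ and the truncation $\Gamma_\Sigma$ depend only on $\Lambda$ and on the simple objects, not on the chosen highest weight structure. Hence if one can feed a single $\Sigma$ to Lemma~\ref{LemBGG} for both structures, the two left-hand sides are forced to agree.

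The remaining task is to produce such a common $\Sigma$. Fix $\mu$ and set $B_\mu:=\{\nu\mid[\nabla(\nu):L(\mu)]\neq0\}$, which by the already-established equality of costandard decomposition numbers is the same subset of $\Lambda$ for both orders. Lemma~\ref{LemBGG} requires $\le_i\mu\subseteq\Sigma$ and $\Sigma\cap B_\mu=\{\mu\}$. The key elementary observation is that no element of $B_\mu$ other than $\mu$ lies below $\mu$ for $\le_i$: if $\nu\in B_\mu$ satisfies $\nu\le_i\mu$, then $\nu\in B_\mu$ gives $\mu\le_i^e\nu$, hence $\mu\le_i\nu$ since $\le_i$ extends $\le_i^e$, and antisymmetry forces $\nu=\mu$; that is, $\le_i\mu\cap B_\mu=\{\mu\}$. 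Therefore the finite set $\Sigma:=(\le_1\mu\cup\le_2\mu)\setminus(B_\mu\setminus\{\mu\})$ still contains both $\le_1\mu$ and $\le_2\mu$ and satisfies $\Sigma\cap B_\mu=\{\mu\}$. Applying Lemma~\ref{LemBGG} to $(\bC,\le_1)$ and $(\bC,\le_2)$ with this common $\Sigma$, together with the intrinsic right-hand side, gives the desired equality of standard decomposition numbers and completes $(3)\Rightarrow(2)$.

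I expect the genuine obstacle to lie entirely in $(3)\Rightarrow(2)$: the hypothesis only constrains costandard objects, and one must transfer this to standard objects through BGG reciprocity and the order-independence of the injective hulls. The delicate step is arranging the single finite truncation $\Sigma$ that simultaneously satisfies the hypotheses of Lemma~\ref{LemBGG} for both orders, which succeeds only because the common value of the costandard decomposition numbers makes both $B_\mu$ and the disjointness $\le_i\mu\cap B_\mu=\{\mu\}$ independent of $i$.
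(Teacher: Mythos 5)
Your proposal is correct and follows essentially the same route as the paper: the paper also reduces $(3)\Rightarrow(2)$ to comparing standard multiplicities via Lemma~\ref{LemBGG} applied with $\Sigma=\le_1\mu\cup\le_2\mu$, using that $[\Gamma_\Sigma I(\lambda):L(\mu)]$ is intrinsic to $\bC$. Your only addition is the explicit verification that $\Sigma$ meets the second hypothesis of Lemma~\ref{LemBGG} for both orders (your subtraction of $B_\mu\setminus\{\mu\}$ is in fact vacuous, since your own observation shows $(\le_1\mu\cup\le_2\mu)\cap B_\mu=\{\mu\}$ already); the paper leaves this check implicit.
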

\begin{proof}
Clearly (1) implies (3). That (2) implies (1) follows from Lemma~\ref{Lemeo}.

Now we prove that (3) implies (2). We thus assume that $[\nabla_1(\nu)]=[\nabla_2(\nu)]$ for all $\nu\in\Lambda$. For a fixed $\mu\in\Lambda$, let $\Sigma$ denote the finite set which is the union of $\le_1\mu$ and $\le_2\mu$. It follows from Lemma~\ref{LemBGG} that
$$[\Delta_1(\mu):L(\lambda)]\;=\; [\Gamma_\Sigma I(\lambda):L(\mu)]\;=\; [\Delta_2(\mu):L(\lambda)].$$
The conclusion now follows from Definition \ref{Defess}.\end{proof}

\begin{lemma}\label{LemLength}
For a lfhw category $(\bC,\le)$, let $\ell:\Lambda\to\mN$ be the function
$$\ell(\lambda)\,=\,\max\{n\in\mN \,|\, \mbox{there exist $\mu_i\in\Lambda$ for $1\le i\le n$ with $\mu_n<^e\mu_{n-1}<^e\cdots<^e\mu_1<^e\lambda$}\}.$$
Set $\Lambda_d:=\ell^{-1}([0,d])$, for $d\in\mN$ (so $\Lambda_{d-1}\subset\Lambda_d$ and $\Lambda=\cup_d\Lambda_d$). Then $\nabla(\lambda)=\Gamma_{\Lambda_{\ell(\lambda)}}I(\lambda)$.
\end{lemma}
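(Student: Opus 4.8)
The plan is to work throughout with the essential order $\le^e$. By Lemma~\ref{Lemeo} the pair $(\bC,\le^e)$ is a lfhw category equivalent to $(\bC,\le)$ with the same costandard objects, so $\nabla(\lambda)=\Gamma_{\le^e\lambda}I(\lambda)$ is the injective hull of $L(\lambda)$ in $\bC_{\le^e\lambda}$. I would first note that $\ell$ is well defined: since $\le^e$ is lower finite, any chain $\mu_n<^e\cdots<^e\mu_1<^e\lambda$ lies in the finite set $\le^e\lambda$, so $\ell(\lambda)\le|{\le^e\lambda}|-1<\infty$, which also gives $\Lambda=\bigcup_d\Lambda_d$. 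Next I would check that $\Lambda_{\ell(\lambda)}$ is an ideal: if $\nu<^e\mu$, appending $\mu$ to a maximal chain below $\nu$ shows $\ell(\nu)<\ell(\mu)$, so each $\Lambda_d$ is downward closed. In particular $\le^e\lambda\subseteq\Lambda_{\ell(\lambda)}\ni\lambda$, whence $\nabla(\lambda)=\Gamma_{\le^e\lambda}I(\lambda)\subseteq\Gamma_{\Lambda_{\ell(\lambda)}}I(\lambda)\subseteq I(\lambda)$. The lemma then reduces to the vanishing of the quotient $Q:=\Gamma_{\Lambda_{\ell(\lambda)}}I(\lambda)/\nabla(\lambda)$.

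I would then identify $Q$ as a subobject of $Q(\lambda):=I(\lambda)/\nabla(\lambda)$ (Remark~\ref{RemQ}(3)) lying in $\Ind\bC_{\Lambda_{\ell(\lambda)}}$, so that $Q\subseteq\Gamma_{\Lambda_{\ell(\lambda)}}Q(\lambda)$ and it suffices to prove $\Gamma_{\Lambda_{\ell(\lambda)}}Q(\lambda)=0$. Since $Q(\lambda)$ is a directed union of finite length objects, any nonzero subobject contains a simple object; hence this vanishing is equivalent to $\Hom(L(\mu),Q(\lambda))=0$ for every $\mu\in\Lambda_{\ell(\lambda)}$. To evaluate this Hom-space I would apply $\Hom(L(\mu),-)$ to $0\to\nabla(\lambda)\to I(\lambda)\to Q(\lambda)\to0$. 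As $I(\lambda)$ is injective in $\Ind\bC$ and has simple socle $L(\lambda)\subseteq\nabla(\lambda)$, the image of $\Hom(L(\mu),I(\lambda))$ in $\Hom(L(\mu),Q(\lambda))$ vanishes, yielding an injection $\Hom(L(\mu),Q(\lambda))\hookrightarrow\Ext^1(L(\mu),\nabla(\lambda))$, with extension groups computed in $\bC$ by Theorem~\ref{ThmInd}.

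Finally I would invoke the Ext-computation carried out in the proof of Lemma~\ref{Lemeo}: using $K(\mu)\hookrightarrow\Delta(\mu)\tto L(\mu)$ together with Lemma~\ref{Lemab}(2) and $[\Delta(\mu):L(\mu)]=1$, one gets $\Ext^1(L(\mu),\nabla(\lambda))\neq0\Rightarrow\lambda<^e\mu$. Combined with $\lambda<^e\mu\Rightarrow\ell(\mu)>\ell(\lambda)\Rightarrow\mu\notin\Lambda_{\ell(\lambda)}$, this forces $\Hom(L(\mu),Q(\lambda))=0$ for all $\mu\in\Lambda_{\ell(\lambda)}$, hence $Q=0$ and $\nabla(\lambda)=\Gamma_{\Lambda_{\ell(\lambda)}}I(\lambda)$.

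The order-theoretic bookkeeping is routine; the main obstacle I anticipate is the ind-completion argument in the second step, namely justifying that $\Gamma_{\Lambda_{\ell(\lambda)}}Q(\lambda)=0$ can be detected on the socle even though $\Lambda_{\ell(\lambda)}$ need not be finite and $Q(\lambda)$ need not lie in $\bC$. This relies on the local finiteness of $\Ind\bC$ (Lemma~\ref{LemTak}), ensuring every nonzero subobject of $Q(\lambda)$ meets a finite length piece and therefore contains a simple object.
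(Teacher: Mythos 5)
Your proof is correct and follows essentially the same route as the paper's: work in the essential order, observe $\nabla(\lambda)\subseteq\Gamma_{\Lambda_{\ell(\lambda)}}I(\lambda)$, and show a strict inclusion would force $\Ext^1(L(\mu),\nabla(\lambda))\neq 0$ for some $\mu$ with $\ell(\mu)\le\ell(\lambda)$, which is impossible since that group vanishes unless $\mu>^e\lambda$. The additional details you supply (well-definedness of $\ell$, that each $\Lambda_d$ is an ideal, and the socle argument locating a simple subobject of the quotient in $\Ind\bC$) are precisely the steps the paper's terse proof leaves implicit.
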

\begin{proof}
We can work in the equivalent highest weight category $(\bC,\le^e)$. It is clear that $\nabla(\lambda)\subset \Gamma_{\Lambda_{\ell(\lambda)}}I(\lambda)$.
If the inclusion is strict, there should exist $\mu\in\Lambda$ with $\ell(\mu)\le \ell(\lambda)$ such that $\Ext^1(L(\mu),\nabla(\lambda))$ does not vanish. By (ii$\dagger$) the latter requires $\mu>^e\lambda$ which means $\ell(\mu)>\ell(\lambda)$, a contradiction.
\end{proof}

\subsection{Uniqueness of essential order}
\begin{theorem}\label{ThmUniq}
Let $\bC$ be a finite length category which admits a simple preserving anti-autoequivalence. Up to equivalence, $\bC$ admits at most one highest weight structure.
\end{theorem}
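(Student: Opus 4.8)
The plan is to deduce the statement from Proposition~\ref{PropEquiv}. Suppose $\le_1$ and $\le_2$ are two lower finite partial orders on $\Lambda$ making $(\bC,\le_1)$ and $(\bC,\le_2)$ into lfhw categories, with (co)standard objects $\Delta_i(\lambda),\nabla_i(\lambda)$; by Lemma~\ref{Lemeo} I may assume each $\le_i$ equals its own essential order. By Proposition~\ref{PropEquiv} it is then enough to prove that $[\nabla_1(\lambda)]=[\nabla_2(\lambda)]$ in $K_0(\bC)$ for every $\lambda\in\Lambda$.

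First I would put the duality to work. Write $\delta$ for the simple preserving anti-autoequivalence. Since $\delta$ is exact, fixes every $L(\lambda)$, and therefore preserves each Serre subcategory $\bC_{\le_i\lambda}$ (being generated by simples), it interchanges the projective cover and the injective hull of $L(\lambda)$ inside $\bC_{\le_i\lambda}$; hence $\delta\Delta_i(\lambda)\cong\nabla_i(\lambda)$. As $\delta$ preserves composition multiplicities this gives $[\Delta_i(\lambda):L(\mu)]=[\nabla_i(\lambda):L(\mu)]$ for all $\mu$, so that the essential order is already generated by the single family of relations $[\nabla_i(\lambda):L(\mu)]\neq0\Rightarrow\mu\le_i\lambda$. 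Thus everything is controlled by the costandard multiplicities, and it suffices to reconstruct these intrinsically.

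The heart of the argument is an intrinsic criterion for a standard object to be simple, dual to the criterion of \cite{blocks} detecting projective standard objects. Concretely, $\Delta_i(\lambda)\cong L(\lambda)$ if and only if $\lambda$ is minimal for $\le_i$, equivalently $\nabla_i(\lambda)\cong L(\lambda)$; I would prove that this holds precisely when a condition formulated purely in terms of the injective hulls $I(\mu)\in\Ind\bC$ and the groups $\Ext^\bullet_{\bC}(L(\mu),L(\nu))$ is satisfied, so that the minimal elements of $\le_1$ and of $\le_2$ form the same subset $\Lambda_{\min}\subseteq\Lambda$, on which $\nabla_1(\lambda)=L(\lambda)=\nabla_2(\lambda)$. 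The main obstacle is exactly here: because $\delta$ induces isomorphisms $\Ext^1_{\bC}(L(\mu),L(\nu))\cong\Ext^1_{\bC}(L(\nu),L(\mu))$, the first extension groups between simples are symmetric and so cannot by themselves detect the direction of the order. The content of the criterion is to break this symmetry, and I expect to do so by examining the second socle layer of $I(\lambda)$: minimality of $\lambda$ forces every simple $L(\mu)$ occurring there to lie strictly above $\lambda$, and it is the duality together with condition \ref{DefHWC}(iii$\dagger$) (via Theorem~\ref{ThmInd}(2)) that rules out downward neighbours and makes this testable without reference to the order.

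Granting the criterion, the proof concludes by a standard highest weight induction. Using Lemma~\ref{LemLength} I would show by induction on $d$ that the layer sets $\Lambda^{(1)}_d=\{\lambda:\ell_1(\lambda)\le d\}$ and $\Lambda^{(2)}_d$ coincide: the case $d=0$ is the identification of $\Lambda_{\min}$ above, and for the inductive step I pass to the Serre quotient by the ideal of the lower layers (Lemma~\ref{LemQuo}, localising to a finite ideal so that its hypotheses apply), where the elements of the next layer are again the minimal ones and so are pinned down by the same criterion. Once $\Lambda^{(1)}_{\ell(\lambda)}=\Lambda^{(2)}_{\ell(\lambda)}$, Lemma~\ref{LemLength} yields $\nabla_1(\lambda)=\Gamma_{\Lambda_{\ell(\lambda)}}I(\lambda)=\nabla_2(\lambda)$, in particular $[\nabla_1(\lambda)]=[\nabla_2(\lambda)]$, and Proposition~\ref{PropEquiv} finishes the argument. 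As a cross-check one can instead verify $[\nabla_1(\lambda)]=[\nabla_2(\lambda)]$ directly from the Humphreys--BGG reciprocity of Lemma~\ref{LemBGG} applied with $\Sigma=\le_1\lambda\cup\le_2\lambda$, once the criterion guarantees that the two orders have the same minimal elements at each stage.
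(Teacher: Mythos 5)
Your overall architecture matches the paper's: reduce to Proposition~\ref{PropEquiv}, use the duality to identify $\delta\Delta_i(\lambda)\cong\nabla_i(\lambda)$, find an order-free criterion detecting when $\Delta(\lambda)=L(\lambda)$, and then recover the essential order (equivalently its finite ideals, or your layer sets $\Lambda_d$) by iteratively peeling off minimal elements through Serre quotients as in Lemma~\ref{LemQuo} and Proposition~\ref{PropIdeal}. You have also correctly located the difficulty: the duality makes $\Ext^1$ between simples symmetric, so degree~$1$ data cannot orient the order and one must go one degree higher.

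However, the central lemma is not actually supplied. You write that you ``would prove'' that minimality of $\lambda$ is equivalent to ``a condition formulated purely in terms of the injective hulls $I(\mu)$ and the groups $\Ext^\bullet_{\bC}(L(\mu),L(\nu))$,'' but you never state the condition, and the sketch you give (inspecting the second socle layer of $I(\lambda)$ and claiming that duality plus \ref{DefHWC}(iii$\dagger$) ``rules out downward neighbours'') is not an argument: the constituents of the second socle layer of $I(\lambda)$ are governed by $\Ext^1(L(\mu),L(\lambda))$, which is exactly the symmetric data you just observed cannot distinguish $\mu>^e\lambda$ from $\mu<^e\lambda$. The paper's criterion (Lemma~\ref{LemExt2}) is that $\lambda$ is minimal for $\le^e$ if and only if $\Ext^2_{\bC}(L(\lambda),L(\lambda))=0$, and the proof is a specific computation you do not reproduce: working in $\bC_{\le\lambda}$, projectivity of $\Delta(\lambda)$ and injectivity of $\nabla(\lambda)$ give
$$\Ext^2(L(\lambda),L(\lambda))\;\simeq\;\Ext^1(K(\lambda),L(\lambda))\;\simeq\;\Hom(K(\lambda),C(\lambda)),$$
with $K(\lambda)=\ker(\Delta(\lambda)\to L(\lambda))$ and $C(\lambda)=\nabla(\lambda)/L(\lambda)$; since the duality forces the top of $K(\lambda)$ to coincide with the socle of $C(\lambda)\simeq\Psi(K(\lambda))$, this $\Hom$-space vanishes only if $K(\lambda)=0$. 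Without this (or an equivalent) argument, the symmetry is never actually broken and the induction over layers has no base case or inductive step. Everything downstream of the criterion in your proposal is fine, so the fix is localized, but as written the proof has a genuine hole at its heart.
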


The rest of the subsection is devoted to the proof.

\begin{prop}\label{PropIdeal}
If $(\bC,\le)$ is a lfhw category with simple preserving anti-autoequivalence, then
a finite subset $\Omega\subset\Lambda $ is an ideal in $(\Lambda,\le^e)$ if and only if we can order the elements in $\Omega$ as $\{\mu_1,\mu_2,\cdots,\mu_n\}$ such that
$$\Ext^2_{\bC^{\Omega_i}}(L(\mu_i),L(\mu_i))=0,\quad\mbox{for $1\le i\le n$},$$
with $\Omega_i=\{\mu_1,\mu_2,\cdots,\mu_{i-1}\}\subset\Omega$.
\end{prop}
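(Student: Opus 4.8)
The plan is to isolate a single homological criterion for minimality and then feed it into an induction on $i$. The criterion I would establish first is the ``Koszul dual'' of the projectivity criterion of \cite{blocks}: for any lfhw category $(\bD,\le)$ carrying a simple preserving anti-autoequivalence and any label $\mu$, the element $\mu$ is minimal in $\le^e$ if and only if $\Ext^2_{\bD}(L(\mu),L(\mu))=0$; equivalently, if and only if $\Delta(\mu)=L(\mu)$. The easy direction is immediate: the anti-autoequivalence is exact, contravariant and fixes simples, so it interchanges $\Delta(\mu)$ and $\nabla(\mu)$; if $\mu$ is minimal then $\Delta(\mu)=\nabla(\mu)=L(\mu)$ and hence $\Ext^2(L(\mu),L(\mu))=\Ext^2(\Delta(\mu),\nabla(\mu))=0$ by \ref{DefHWC}(iii$\dagger$).

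For the substantial direction I would argue by contraposition: assuming $\mu$ is not minimal, I would produce a nonzero class in $\Ext^2(L(\mu),L(\mu))$. First pass to the finite highest weight category $\bA:=\bD_{\le^e\mu}$; by Corollary~\ref{CorOmegaEF} this does not change the relevant $\Ext$-groups, $\bA$ inherits the anti-autoequivalence $\tau$ (which preserves $\bA$ since it fixes simples), and $\mu$ is the greatest element of $\bA$, so $\Delta(\mu)$ is projective there. Writing $K(\mu)=\ker(\Delta(\mu)\tto L(\mu))=\mathrm{rad}\,\Delta(\mu)$, the long exact sequence for $\Hom(-,L(\mu))$ together with projectivity of $\Delta(\mu)$ gives $\Ext^2_{\bA}(L(\mu),L(\mu))\cong\Ext^1_{\bA}(K(\mu),L(\mu))$. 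Since $\mu$ is not minimal, $K(\mu)\neq0$, so it has a simple head $L(\nu)$ with $\nu<^e\mu$; as $\Delta(\mu)$ is the projective cover of $L(\mu)$, this forces $\Ext^1_{\bA}(L(\mu),L(\nu))\neq0$. The anti-autoequivalence now enters decisively: $\Ext^1_{\bA}(L(\mu),L(\nu))\cong\Ext^1_{\bA}(L(\nu),L(\mu))$, and because $[\Delta(\mu):L(\mu)]=1$ (Lemma~\ref{Lemb}(1)) the object $L(\mu)$ is not a composition factor of $K(\mu)$, so a quotient $K(\mu)\tto L(\nu)$ induces an injection $\Ext^1_{\bA}(L(\nu),L(\mu))\hookrightarrow\Ext^1_{\bA}(K(\mu),L(\mu))$. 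Composing these yields $\Ext^2(L(\mu),L(\mu))\neq0$, as wanted.

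With the criterion in hand I would prove the proposition by showing that the stated $\Ext^2$-conditions force each $\Omega_i=\{\mu_1,\dots,\mu_{i-1}\}$ to be an ideal, by induction on $i$. The base case $\Omega_1=\varnothing$ is trivial. Assuming $\Omega_i$ is an ideal, Lemma~\ref{LemQuo} makes $\bC^{\Omega_i}$ a lfhw category with induced anti-autoequivalence and identifies its standard object at $\mu_i$ with $\pi\Delta(\mu_i)$. The hypothesis $\Ext^2_{\bC^{\Omega_i}}(L(\mu_i),L(\mu_i))=0$ and the criterion then say precisely that $\pi\Delta(\mu_i)=L(\mu_i)$, i.e.\ every composition factor $L(\nu)$ of $\Delta(\mu_i)$ with $\nu\neq\mu_i$ lies in $\Omega_i$. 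Using $\tau\Delta(\mu_i)\cong\nabla(\mu_i)$, these are exactly the direct $\le^e$-predecessors of $\mu_i$, so since $\Omega_i$ is a $\le^e$-ideal the whole down-set $<^e\mu_i$ lies in $\Omega_i$; hence $\Omega_{i+1}=\Omega_i\cup\{\mu_i\}$ is again an ideal, and finally $\Omega=\Omega_{n+1}$ is an ideal. Conversely, if $\Omega$ is an ideal I would order it by a linear extension of $\le^e$, so that each $\Omega_i$ is an ideal and $\mu_i$ is minimal in $\bC^{\Omega_i}$; the easy direction of the criterion then supplies the required vanishing.

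The main obstacle is the hard direction of the criterion, and more precisely the step where the anti-autoequivalence converts the ``wrong-way'' extension $\Ext^1(L(\mu),L(\nu))$ --- which the projective cover of $L(\mu)$ supplies for free --- into $\Ext^1(L(\nu),L(\mu))$ and lets it inject into $\Ext^1(K(\mu),L(\mu))$. Without the duality this self-extension can genuinely vanish for non-minimal $\mu$ (as in hereditary examples), so this is exactly where the hypothesis of Theorem~\ref{ThmUniq} is used. The one point requiring care is to ensure the head quotient $K(\mu)\tto L(\nu)$ gives an \emph{injection} on $\Ext^1$ rather than a possibly zero map, which is precisely what the multiplicity $[\Delta(\mu):L(\mu)]=1$ guarantees.
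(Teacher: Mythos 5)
Your proposal is correct and follows the same overall strategy as the paper: first establish the criterion that, in the presence of the duality, a label is minimal in the essential order if and only if $\Ext^2(L,L)=0$ (the paper's Lemma~\ref{LemExt2}), then run an induction over the $\Omega_i$, using Lemma~\ref{LemQuo} to pass to the quotients $\bC^{\Omega_i}$ and a linear extension of $\le^e$ for the converse. The only point where your execution differs is the hard direction of the criterion: the paper performs a double dimension shift, $\Ext^2(L(\mu),L(\mu))\simeq\Ext^1(K(\mu),L(\mu))\simeq\Hom(K(\mu),C(\mu))$ with $C(\mu)=\nabla(\mu)/L(\mu)$, and then observes that $C(\mu)\simeq\Psi(K(\mu))$ forces this $\Hom$-space to be nonzero whenever $K(\mu)\neq0$; you instead shift only once and use the duality at the level of $\Ext^1$ between simples, $\Ext^1(L(\mu),L(\nu))\cong\Ext^1(L(\nu),L(\mu))$, injecting the latter into $\Ext^1(K(\mu),L(\mu))$ via a head quotient $K(\mu)\tto L(\nu)$, with $[\Delta(\mu):L(\mu)]=1$ guaranteeing injectivity. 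Both variants use the anti-autoequivalence in an essentially equivalent way, and your version is complete and correct.
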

We start by proving the case $|\Omega|=1$ of the proposition.
\begin{lemma}\label{LemExt2}
Consider a lfhw category $(\bD,\preceq)$ with a simple preserving anti-autoequivalence $\Psi$.
The following are equivalent for $\lambda\in\Lambda$:
\begin{enumerate}
\item $\lambda$ is minimal under $\preceq^e$ (that is $\ell(\lambda)=0$ or $\Delta(\lambda)=L(\lambda)=\nabla(\lambda)$);
\item $\Ext^2_{\bD}(L(\lambda),L(\lambda))=0$;
\item $\Ext^\bullet_{\bD}(L(\lambda),L(\lambda))=k$.
\end{enumerate}
\end{lemma}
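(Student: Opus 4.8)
The plan is to prove the cyclic chain of implications (1) $\Rightarrow$ (3) $\Rightarrow$ (2) $\Rightarrow$ (1). The implication (3) $\Rightarrow$ (2) is trivial since (3) asserts that the total extension algebra is just $k$ concentrated in degree $0$, so in particular $\Ext^2_{\bD}(L(\lambda),L(\lambda))=0$.

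For (1) $\Rightarrow$ (3): if $\lambda$ is minimal under $\preceq^e$, then by Definition~\ref{Defess} no simple $L(\mu)$ with $\mu \neq \lambda$ can appear in $\Delta(\lambda)$ or $\nabla(\lambda)$, which forces $\Delta(\lambda)=L(\lambda)=\nabla(\lambda)$. I would then compute $\Ext^\bullet_{\bD}(L(\lambda),L(\lambda))=\Ext^\bullet_{\bD}(\Delta(\lambda),\nabla(\lambda))$. Since $\lambda$ is minimal, it is maximal in $\bD_{\preceq\lambda}=\bD_{\preceq^e\lambda}$ (a one-object category), so by Lemma~\ref{Lemab}(1) the $\Hom$ space is $k$; by Lemma~\ref{Lemab}(2) the $\Ext^1$ vanishes; and the higher extensions vanish by the same kind of dimension-shift argument used in Lemma~\ref{LemExt}, using that $\Delta(\lambda)$ remains projective in each finite ideal $\bD_\Omega$ (Lemma~\ref{Lemb}(3)) and $\nabla(\lambda)$ remains injective there (condition (ii$\dagger$)). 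Concretely, resolving $\nabla(\lambda)=L(\lambda)$ by its injective hull $I(\lambda)$ and using Theorem~\ref{ThmInd}(2) together with Remark~\ref{RemQ}(3) reduces every $\Ext^{i}$ with $i>0$ to $\Ext^{i-1}(\Delta(\lambda),Q(\lambda))$, and $Q(\lambda)$ has a $\nabla$-flag by costandard objects $\nabla(\mu)$ with $\mu \succ \lambda$, all of which give vanishing $\Hom$ and $\Ext$ against $\Delta(\lambda)$ by Lemma~\ref{Lemab}.

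The substantive implication — and the main obstacle — is (2) $\Rightarrow$ (1). Here I would argue by contrapositive: suppose $\lambda$ is not minimal, so $\ell(\lambda)>0$ and at least one of $\Delta(\lambda)$, $\nabla(\lambda)$ is strictly bigger than $L(\lambda)$. The role of the duality $\Psi$ is crucial: since $\Psi$ is simple-preserving, it interchanges the standard and costandard objects (up to the highest weight structure $\Psi$ induces, which by Theorem~\ref{ThmUniq}'s philosophy is compatible), so $\Delta(\lambda)=L(\lambda)$ if and only if $\nabla(\lambda)=L(\lambda)$. Thus non-minimality forces both $\Delta(\lambda)$ and $\nabla(\lambda)$ to have a simple top/socle $L(\lambda)$ with a nontrivial radical/cosocle. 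I would then produce a nonzero class in $\Ext^2_{\bD}(L(\lambda),L(\lambda))$ by splicing a nonsplit extension witnessing $\mathrm{rad}\,\Delta(\lambda)\neq 0$ with its $\Psi$-dual witnessing $\nabla(\lambda)/L(\lambda)\neq 0$. The key point is that the Yoneda product of these two degree-$1$ classes is nonzero; this should follow because $\Delta(\lambda)$ is projective in a suitable finite ideal while $\nabla(\lambda)$ is injective there, so the relevant $\Ext^1$ groups are detected on the heads and socles and the composite cannot vanish.

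The delicate part I expect to be genuinely hard is verifying that the spliced Yoneda class is nonzero rather than merely constructing it. I anticipate needing to pass to a small finite ideal $\Omega$ containing $\preceq^e\!\lambda$ in which $\lambda$ is maximal, use Corollary~\ref{CorOmegaEF} to compute all the extension groups inside $\bD_\Omega$ (where $\Delta(\lambda)$ is projective and $\nabla(\lambda)$ injective by Lemma~\ref{Lemb}(3) and (ii$\dagger$)), and there exhibit a minimal projective resolution of $L(\lambda)$ whose second syzygy provably has $L(\lambda)$ in its top — this is where the duality $\Psi$ enters a second time, forcing the projective resolution and injective coresolution to be dual and hence the $\Ext^2$ self-extension to survive. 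Managing the bookkeeping between $\bD$, the finite ideals $\bD_\Omega$, and the Serre quotients (so that the $\Ext^2_{\bC^{\Omega_i}}$ in the ambient Proposition~\ref{PropIdeal} matches the $\Ext^2_{\bD}$ here after taking $\bD=\bC^{\Omega_i}$) will be the fiddly but essential step.
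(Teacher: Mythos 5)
Your implications (3)$\Rightarrow$(2) and (1)$\Rightarrow$(3) are fine (the paper gets (1)$\Rightarrow$(3) more quickly by applying Corollary~\ref{CorOmegaEF} to $(\bD,\preceq^e)$ with the one-element ideal $\{\lambda\}$ and noting $\bD_{\{\lambda\}}$ is semisimple, but your dimension-shift via $I(\lambda)$, Remark~\ref{RemQ}(3) and Lemma~\ref{LemExt} also works). The problem is (2)$\Rightarrow$(1), which is the entire content of the lemma: you propose to splice two degree-one classes and you explicitly concede that you cannot verify the resulting Yoneda square is nonzero. That is not a technicality you can defer --- showing a specific Yoneda product of two nonzero $\Ext^1$ classes is nonzero is in general false and is exactly the point where the duality must do real work, so as written the proof has a genuine gap. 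A secondary worry: you justify $\Psi(\Delta(\lambda))\simeq\nabla(\lambda)$ by appeal to ``Theorem~\ref{ThmUniq}'s philosophy,'' which is circular since Theorem~\ref{ThmUniq} rests on this lemma. (The fact itself is fine and needs no uniqueness: $\Psi$ preserves simples, hence preserves $\bD_{\preceq\lambda}$, and an anti-autoequivalence carries the projective cover of $L(\lambda)$ there to the injective hull.)

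The missing idea is to avoid Yoneda products altogether and instead identify the \emph{whole group} $\Ext^2_{\bD}(L(\lambda),L(\lambda))$ with something visibly nonzero when $\lambda$ is not minimal. By Corollary~\ref{CorOmegaEF} compute in $\bD_{\preceq\lambda}$. Set $K(\lambda)=\ker(\Delta(\lambda)\to L(\lambda))$ and $C(\lambda)=\coker(L(\lambda)\hookrightarrow\nabla(\lambda))$. Projectivity of $\Delta(\lambda)$ in $\bD_{\preceq\lambda}$ gives
$\Ext^2_{\bD_{\preceq\lambda}}(L(\lambda),L(\lambda))\simeq\Ext^1(K(\lambda),L(\lambda))$,
and injectivity of $\nabla(\lambda)$ together with $\Hom(K(\lambda),\nabla(\lambda))=0$ (the socle of $\nabla(\lambda)$ is $L(\lambda)$, which does not occur in $K(\lambda)$) gives
$\Ext^1(K(\lambda),L(\lambda))\simeq\Hom(K(\lambda),C(\lambda))$.
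Since $C(\lambda)\simeq\Psi(K(\lambda))$, the socle of $C(\lambda)$ is isomorphic to the top of $K(\lambda)$, so $\Hom(K(\lambda),C(\lambda))=0$ forces $K(\lambda)=0=C(\lambda)$, i.e.\ $\Delta(\lambda)=L(\lambda)=\nabla(\lambda)$. This double dimension shift is the step your splicing strategy is trying to reconstruct by hand; once you see it, the ``delicate part'' you flagged disappears.
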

\begin{proof}
Claim (3) implies claim (2) as a special case. If (1) is satisfied, then Corollary~\ref{CorOmegaEF} applied to the lfhw category $(\bD,\preceq^e)$ implies 
$$\Ext^\bullet_{\bD}(L(\lambda),L(\lambda))\;\simeq\; \Ext^\bullet_{\bD_{\{\lambda\}}}(L(\lambda),L(\lambda)).$$
By Lemma~\ref{Lemb}(1), the projective cover $\Delta(\lambda)$ of $L(\lambda)$ in $\bD_{\{\lambda\}}$ is simple and hence $\bD_{\{\lambda\}}$ is semisimple. Therefore (1) implies (3).

Finally we show that (2) implies (1).
By Corollary~\ref{CorOmegaEF}, we can compute the extension group of (2) in $\bD_{\preceq\lambda}$. Denote the kernel of $\Delta(\lambda)\to L(\lambda)$ by $K(\lambda)$ and the cokernel of $L(\lambda)\hookrightarrow \nabla(\lambda)$ by $C(\lambda)$.
Applying the fact that $\Delta(\lambda)$, resp. $\nabla(\lambda)$, is projective, resp. injective, in $\bD_{\preceq \lambda}$ shows that
\begin{equation}\label{eqExt2}0=\Ext^2_{\bD_{\preceq\lambda}}(L(\lambda),L(\lambda))\simeq \Ext^1(K(\lambda),L(\lambda))\simeq \Hom(K(\lambda),C(\lambda)).\end{equation}
By Definition \ref{DefHWC}(i$\dagger$) we have $\nabla(\lambda)\simeq\Psi(\Delta(\lambda))$. Hence the top of $K(\lambda)$ is isomorphic to the socle of $C(\lambda)\simeq \Psi(K(\lambda))$, and the morphism space in \eqref{eqExt2} can only be zero if $K(\lambda)=0=C(\lambda)$.        
\end{proof}

\begin{remark}
More generally, the technique in the proof of Lemma~\ref{LemExt2} shows that for any subobject $D\subset \nabla(\lambda)$, we have
$\Ext^2_{\bC}(\Psi D,D)=0$ if and only if $D=\nabla(\lambda).$
\end{remark}

\begin{proof}[Proof of Proposition~\ref{PropIdeal}]
Assume first that $\Omega$ is an ideal in $\le^e$. Then we can order the elements in $\Omega$ as $\{\mu_1,\ldots,\mu_n\}$, such that each $\Omega_i$, as defined in the proposition, is also an ideal. Then $\mu_i$ is minimal in $\Lambda\backslash\Omega_i$, for $1\le i\le n$. Hence Lemma~\ref{LemExt2}, applied to the lfhw category $(\bC^{\Omega_i},\le^e)$ implies all the relevant extensions vanish. 

Now assume that we can order the set $\Omega$ such that the extensions in the proposition vanish. We prove by induction on $n=|\Omega|$ that $\Omega$ is an ideal in $(\Lambda,\le^e)$. If $n=1$, then the claim follows from Lemma~\ref{LemExt2}. Next assume that we know the claim is true for $n-1$. Then by assumption we already know that $\Omega_{n}$ is an ideal. Then $(\bC^{\Omega_n},\le^e)$ is a lfhw category by Lemma~\ref{LemQuo}, so Lemma~\ref{LemExt2} shows that $\mu_n$ is minimal in $\Lambda\backslash\Omega_n$ under $\le^e$. Thus $\Omega$ is indeed also an ideal in $(\Lambda,\le^e)$.
\end{proof}

\begin{proof}[Proof of Theorem~\ref{ThmUniq}]
By Proposition~\ref{PropEquiv}, it suffices to prove that every lfhw structure leads to the same essential order. Since any lower finite partial order is determined by its set of finite ideals, the conclusion follows from Proposition~\ref{PropIdeal}.
\end{proof}

\begin{remark}
In \cite{CPS2}, certain highest weight categories with a `duality' (meaning simple preserving involutive contravariant functor) are studied. Theorem~\ref{ThmUniq} demonstrates that under those circumstances the category is highest weight for at most one lower finite partial order (up to equivalence).
\end{remark}

\begin{remark}
We observe that the condition in Lemma~\ref{LemExt2}
$$\dim_k\Ext^\bullet (L(\lambda),L(\lambda))=1$$
for a simple object to be a standard object in a {\em lower finite} highest weight category with duality, is `Koszul dual' to the condition (as derived in \cite{blocks})
$$\dim_k\Hom(P(\lambda),P(\lambda))=1$$
for a projective object to be a standard object in an {\em upper finite} highest weight category with duality.
\end{remark}

\subsection{Remarks on Ringel duality}

Let $(\bC,\le)$ be a lfhw category. 
\subsubsection{}
By \cite[Theorem~4.2]{BS}, for every $\lambda\in\Lambda$ there exists a unique indecomposable tilting object $T(\lambda)\in\bC_{\le\lambda}$ with $[T(\lambda):L(\lambda)]\not=0$. Recall that tilting objects are the objects in $\bC$ which admit both a $\nabla$-flag and a $\Delta$-flag.
Moreover, by \cite[4.19-4.20]{BS}, there is an appropriate category of modules over the category of tilting objects, which forms an upper finite highest weight category, the Ringel dual of  $(\bC,\le)$. It is proved in \cite[\S 4.3]{BS} that upper finite highest weight categories also have a theory of tilting objects, and in \cite[Corollary~4.24]{BS} that taking the Ringel dual of the Ringel dual returns the original lfhw category (up to equivalence).

Any simple preserving anti-autoequivalence on $\bC$ restricts to an anti-autoequivalence on the category of tilting objects in $(\bC,\le)$, and every indecomposable tilting object will be preserved. It then follows from \cite[4.3.1]{blocks} that the Ringel dual of $(\bC,\le)$ admits no second (non-equivalent) highest weight structure. From the above Ringel duality we can then deduce (without using Theorem~\ref{ThmUniq}) that $\bC$ cannot admit a second highest weight structure {\em with the same category of tilting objects}. Perhaps this observation can be used to find an alternative proof of Theorem~\ref{ThmUniq}.

\subsubsection{}
 We define the tilting order as the partial order $\le^t$ on $\Lambda$ generated by the two relations
$$(T(\lambda):\Delta(\mu))\not=0\;\Rightarrow\; \mu\le^t\lambda\quad\mbox{and}\quad (T(\lambda):\nabla(\mu))\not=0\;\Rightarrow \;\mu\le^t\lambda.$$
By Lemma~\ref{Lemeo}, we have $T(\lambda)\in\bC_{\le^e\lambda}$, so $\mu\le^t\lambda$ implies that $\mu\le^e\lambda$.

It follows from the equivalences of categories in \cite[4.20-4.22]{BS} and standard reciprocity laws, such as Lemma~\ref{LemPf}, that the essential order on the Ringel dual of $(\bC,\le)$ is the reversal of $\le^t$ and the tilting order is the reversal of $\le^e$ (both orders are defined by the same formulae as in the lower finite case). This interpretation of the partial order shows that $\mu\le^e\lambda$ implies $\mu\le^t\lambda$. In conclusion, the tilting and essential order coincide, for any given lfhw category.

\subsection{Connection with the CPS setting}\label{SecCPS}
For a finite length category $\bC$, a priori the category $\Ind\bC$ could also be a highest weight category in the sense of \cite[Definition~3.1]{CPS}. In this subsection, we demonstrate that we can extend Theorem~\ref{ThmUniq} by showing that, under the same assumptions, $\Ind\bC$ cannot be a highest weight category in any other way than via its (unique) lfhw structure.

\subsubsection{} We let $(\bA,\sqsubseteq)$ be a highest weight category in the sense of  \cite[Definition~3.1]{CPS}. In particular $\bA$ is `locally artinian' and we label simple objects by a poset $(\Lambda,\sqsubseteq)$ which is only assumed interval finite. To stress that we use \cite[Definition~3.1]{CPS} and do not work with a lfhw category, we use the notation $L(\lambda)\subset A(\lambda)\subset I(\lambda)$ of \cite{CPS}, and not $\nabla(\lambda)$. Similarly we use the term `good filtration' as in \cite[3.1(b)]{CPS} and not `$\nabla$-flag'.

\begin{lemma}\label{Lem4CPS}
Fix $\lambda\in\Lambda$ and assume
$$\dim\Ext^1_{\bA}(L(\lambda),L(\mu))\,=\,\dim \Ext^1_{\bA}(L(\mu),L(\lambda)),\quad\mbox{for all $\mu\in\Lambda$}.$$
\begin{enumerate}
\item If $\Ext^2_{\bA}(L(\lambda),L(\lambda))=0$, then
$L(\lambda)=A(\lambda)$.
\item If $A(\lambda)=L(\lambda)$ then $A(\lambda)$ appears in a good filtration of $I(\nu)$ if and only if $\nu=\lambda$.
\end{enumerate}
\end{lemma}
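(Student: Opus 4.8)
The plan is to prove the two parts of Lemma~\ref{Lem4CPS} in sequence, in both cases reducing extension computations in the locally artinian CPS-category $\bA$ down to finite Serre subcategories where good filtrations and the usual reciprocity tools are available.

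For part (1), I would argue that the hypothesis $\Ext^2_{\bA}(L(\lambda),L(\lambda))=0$ forces $A(\lambda)$ to be simple, mimicking the $(2)\Rightarrow(1)$ implication of Lemma~\ref{LemExt2}. First I would pass to the (interval-finite) ideal $\sqsubseteq\lambda$ and work in the corresponding Serre subcategory, using the CPS axioms to guarantee that $A(\lambda)$ is the maximal object with socle $L(\lambda)$ whose other composition factors lie strictly below $\lambda$, together with the projective cover $P(\lambda)$ carrying a good filtration. Writing $K(\lambda)=\ker(P(\lambda)\tto L(\lambda))$ and $C(\lambda)=\coker(L(\lambda)\hookrightarrow A(\lambda))$, the vanishing of $\Ext^2$ should translate (via projectivity of $P(\lambda)$ and the standard dimension-shifting exact sequences) into $\Ext^1(K(\lambda),L(\lambda))=0$, hence into $\Hom(K(\lambda),C(\lambda))=0$. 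The new ingredient replacing the duality in Lemma~\ref{LemExt2} is precisely the symmetry hypothesis $\dim\Ext^1_{\bA}(L(\lambda),L(\mu))=\dim\Ext^1_{\bA}(L(\mu),L(\lambda))$: this numerical balance is what lets me conclude that the top of $K(\lambda)$ must appear in the socle of $C(\lambda)$, so that $\Hom(K(\lambda),C(\lambda))=0$ can only hold when $K(\lambda)=0=C(\lambda)$, giving $L(\lambda)=A(\lambda)$.

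For part (2), assuming $A(\lambda)=L(\lambda)$, I would analyse the multiplicity of $A(\lambda)=L(\lambda)$ in a good filtration of $I(\nu)$. The reciprocity principle from Lemma~\ref{LemPf}, adapted to the CPS setting, identifies this multiplicity with $[P(\nu):L(\lambda)]$ (or equivalently with a $\Hom$ of the form $\dim\Hom(A(\lambda),I(\nu)/\text{lower part})$). When $\lambda$ is a minimal weight with $A(\lambda)=L(\lambda)$, this simple object is both standard and costandard, and I would show the multiplicity of $L(\lambda)$ in a good filtration of $I(\nu)$ equals $\delta_{\lambda\nu}$ by combining $\Hom(A(\lambda),A(\mu))=\delta_{\lambda\mu}$ (Lemma~\ref{Lemab}-type orthogonality) with the fact that $L(\lambda)$ is simultaneously injective at its own level. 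The reverse direction is trivial since $A(\lambda)$ always occurs (once) in a good filtration of $I(\lambda)$.

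The main obstacle I expect is transporting the finite-category machinery of Section~\ref{SecDef}—BGG reciprocity (Lemma~\ref{LemPf}), the $\Hom$-orthogonality of (co)standards (Lemma~\ref{Lemab}(1)), and extension fullness of finite ideals (Corollary~\ref{CorOmegaEF})—into the CPS framework, where $\bA$ is only locally artinian and the order is merely interval finite rather than lower finite. The key technical point is to verify that every relevant $\Ext^i_{\bA}$ can be computed inside a suitable finite Serre subcategory $\bA_{\sqsubseteq\lambda}$ (using that $A(\lambda)$ and $P(\lambda)$ live there and that standard/costandard objects remain projective/injective in it), so that all the reciprocity and orthogonality statements apply verbatim. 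Once this localisation is justified, the duality-free symmetry hypothesis substitutes cleanly for the anti-autoequivalence $\Psi$ used in Lemma~\ref{LemExt2}, and both parts follow from the same socle/top argument.
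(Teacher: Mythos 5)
Your proposal has a genuine gap: it is built on objects and tools that the CPS axioms do not supply. In Lemma~\ref{Lem4CPS} the category $\bA$ is only a highest weight category in the sense of \cite[Definition~3.1]{CPS}: locally artinian, with injective hulls $I(\lambda)$ admitting good filtrations, over a poset that is merely interval finite. Nothing guarantees the existence of a projective cover $P(\lambda)$ or of standard objects, and the set $\sqsubseteq\lambda$ need not be finite, so $\bA_{\sqsubseteq\lambda}$ need not be a finite abelian category in which projective covers, BGG reciprocity (Lemma~\ref{LemPf}) and extension fullness (Corollary~\ref{CorOmegaEF}) are available. Both halves of your argument rest on exactly this unavailable machinery: the dimension shift $\Ext^2(L(\lambda),L(\lambda))\simeq\Ext^1(K(\lambda),L(\lambda))\simeq\Hom(K(\lambda),C(\lambda))$ in part (1) needs $P(\lambda)$ to exist and to stay projective after truncation, and the identification of the good-filtration multiplicity with $[P(\nu):L(\lambda)]$ in part (2) is a reciprocity law you would first have to establish in the CPS setting. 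You flag `transporting the finite-category machinery' as the main obstacle, but that transport is essentially the content of the whole of Section~\ref{SecCPS}; assuming it amounts to assuming that the CPS structure already agrees with a lfhw structure, which is what is to be proved.

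The paper's proof avoids projectives entirely and works only with $L(\lambda)$, $A(\lambda)$, $I(\lambda)$ and $\Ext^{\le 1}$. For (1) it sets $Q:=A(\lambda)/L(\lambda)$ and deduces $\Ext^1_{\bA}(L(\lambda),Q)=0$ from the two short exact sequences $A(\lambda)\hookrightarrow I(\lambda)\tto I(\lambda)/A(\lambda)$ and $L(\lambda)\hookrightarrow A(\lambda)\tto Q$, using only injectivity of $I(\lambda)$, the good filtration of $I(\lambda)/A(\lambda)$ and the hypothesis $\Ext^2(L(\lambda),L(\lambda))=0$; if $Q\neq 0$ it has a simple subobject $L(\mu)$ with $\mu\neq\lambda$, one gets $\Ext^1(L(\lambda),L(\mu))=0$, flips this via the symmetry hypothesis to $\Ext^1(L(\mu),L(\lambda))=0$, and then $\Hom(L(\mu),Q)=0$ contradicts $L(\mu)\subset Q$. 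For (2) it combines \cite[Lemma~3.2(b)]{CPS} with the symmetry hypothesis to get $\Ext^1(L(\lambda),L(\mu))=0$ for all $\mu\sqsubseteq\lambda$, and then shows that any extension of $L(\lambda)$ by some $A(\kappa)$ with $\kappa\sqsubset\lambda$ splits by passing to a finite length subobject surjecting onto $L(\lambda)$. Your top-of-$K$ versus socle-of-$C$ picture is the right intuition for where the symmetry hypothesis replaces the duality $\Psi$, but to make it a proof you would have to first justify the localisation you defer; the paper's route shows this is unnecessary.
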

\begin{proof}
Set $Q:=A(\lambda)/L(\lambda)$. Under the assumption in (1) we have epimorphisms
$$\Hom_{\bA}(L(\lambda),I(\lambda)/A(\lambda))\tto \Ext^1_{\bA}(L(\lambda),A(\lambda))\tto \Ext^1_{\bA}(L(\lambda),Q).$$
The left-hand space is zero by \cite[3.1(c)(ii)]{CPS}, so we find $\Ext^1(L(\lambda),Q)=0$.

If $Q\not=0$, then the locally artinian assumption implies that we have a subobject $L(\mu)\subset Q$ for some $\mu\in\Lambda$ (different from $\lambda$) and hence a corresponding exact sequence
$$\Hom_{\bA}(L(\lambda),Q/L(\mu))\to \Ext^1_{\bA}(L(\lambda),L(\mu))\to \Ext^1_{\bA}(L(\lambda),Q).$$
The left-hand space is zero and by the above paragraph, so is the right-hand one. In conclusion and by assumption, we find
$$0=\Ext^1_{\bA}(L(\lambda),L(\mu))=\Ext^1_{\bA}(L(\mu),L(\lambda)).$$
However, the exact sequence
$$0=\Hom_{\bA}(L(\mu),A(\lambda))\to \Hom_{\bA}(L(\mu),Q)\to\Ext^1_{\bA}(L(\mu),L(\lambda))=0$$
then contradicts $L(\mu)\subset Q$. In conclusion $Q=0$ and thus $L(\lambda)=A(\lambda)$, proving part (1).

Under the assumption in part (2) and by \cite[Lemma~3.2(b)]{CPS} we have
\begin{equation}\label{1LL}0=\Ext^1_{\bA}(L(\mu),L(\lambda))=\Ext^1_{\bA}(L(\lambda),L(\mu)),\qquad\mbox{for all $\mu\sqsubseteq\lambda$}.\end{equation}
If $A(\lambda)=L(\lambda)$ appears in a good filtration of $I(\nu)$, then \cite[3.1(c) and 3.2(b)]{CPS} implies that there exists $\nu\sqsubseteq \kappa\sqsubset\lambda$ such that $\Ext^1_{\bA}(L(\lambda),A(\kappa))\not=0$. Denote by $E$ such a non-split extension. Since $E$ is a union of its finite length subobjects, we have $E'\subset E$ of finite length such that $E'\to E\to L(\lambda)$ is an epimorphism. However, since $[E':L(\mu)]\not=0$ implies that $\mu\sqsubseteq\lambda$ it follows from equation~\eqref{1LL} that  $E'\tto L(\lambda)$ splits. Consequently, $E$ splits too and we find a contradiction.
\end{proof}

\subsubsection{} Consider a lfhw category $(\bC,\le)$ with a simple preserving anti-autoequivalence $\Psi$. Assume that $(\Ind\bC,\sqsubseteq)$ is also a highest weight category in the sense of \cite{CPS}, for some partial order $\sqsubseteq$ on $\Lambda$. Existence of $\Psi$ implies we can apply Lemma~\ref{Lem4CPS} to $\bA=\Ind\bC$. In combination with Lemma~\ref{LemExt2} we find that if $\ell(\lambda)=0$, then $A(\lambda)=L(\lambda)=\nabla(\lambda)$ and $A(\lambda)$ appears in a good filtration of $I(\nu)$ if and only if $\nu=\lambda$. In particular, $(\Ind\bC,\sqsubseteq')$ is still a highest weight category for the order $\sqsubseteq'$ obtained from $\sqsubseteq$ by removing all relations $\nu\sqsubset\lambda$ with $\ell(\lambda)=0$. Let $\bB$ the Serre subcategory of objects in $\Ind\bC$ which only have non-zero multiplicities for simple objects $L(\lambda)$ with $\ell(\lambda)=0$. It follows that $(\Ind\bC)/\bB$ is again canonically a highest weight category. By iteration, we find that $(\Ind\bC,\sqsubseteq'')$ is a highest weight category (with the same $A(\mu)$) for some restriction $\sqsubseteq''$ of $\sqsubseteq$ for which 
$$\mu\sqsubset''\lambda\quad\Rightarrow \quad \ell(\mu)<\ell(\lambda).$$ 
Lemma~\ref{LemLength} therefore implies that $A(\lambda)\subseteq \nabla(\lambda)$, for all $\lambda\in\Lambda$. 
Also by Lemma~\ref{LemLength}, if this inclusion would not be an equality we need
$$\Ext^1(L(\nu),A(\lambda))\not=0,\quad\mbox{with $\ell(\nu)\le\ell(\lambda)$}.$$
However, \cite[Lemma~3.2(b)]{CPS} then implies that $\lambda \sqsubset''\nu$ which contradicts $\ell(\nu)\le\ell(\lambda)$.

In conclusion $A(\lambda)=\nabla(\lambda)$, and $\Ind\bC$ admits no highest weight structures which cannot be identified with its lfhw structure.

\section{Examples}\label{SecExam}

\subsection{Quasi-hereditary algebras}
The module categories of quasi-hereditary algebras are (lower) finite highest weight categories. 
Our uniqueness Theorem~\ref{ThmUniq} of highest weight structures thus applies to that setting. This gives an alternative to the proof for this fact from \cite{blocks}. The proof {\it loc. cit.} extends canonically to upper finite highest weight categories.

\subsection{Reductive groups}
Let $k$ be an algebraically closed field of positive characteristic $p$ and $G/k$ a reductive group. 

\begin{prop}\label{PropRepG}
The category $\Rep G$ admits precisely one highest weight structure, up to equivalence.
\end{prop}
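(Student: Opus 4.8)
The plan is to deduce this from the uniqueness Theorem~\ref{ThmUniq}, whose hypotheses I verify for $\bC=\Rep G$. Since that theorem only provides uniqueness, I would separately recall that $\Rep G$ carries at least one highest weight structure, so that `precisely one' follows from `at least one' together with `at most one'.

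First I would check that $\Rep G$ is a finite length category in the sense of Section~\ref{SFL}. As $G$ is an affine algebraic group, $\Rep G$ is the category of finite-dimensional comodules over the coordinate Hopf algebra $\cO(G)$; it is $k$-linear, abelian, essentially small, every object has finite length, and all Hom-spaces are finite-dimensional. (By Lemma~\ref{LemTak}, $\Ind(\Rep G)$ is then the category of all algebraic representations $\Rep^\infty G$, although this will not be needed here.) Next I would record the classical lower finite highest weight structure. Label the simple objects $L(\lambda)$ by the set $\Lambda=X^+(T)$ of dominant weights with the dominance order $\le$. This order is lower finite, since for fixed $\lambda$ the dominant weights $\mu\le\lambda$ all occur among the finitely many weights of $H^0(\lambda)$. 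Taking the induced modules $\nabla(\lambda)=H^0(\lambda)=\mathrm{ind}_B^G\lambda$ as costandard objects and the Weyl modules as standard objects $\Delta(\lambda)$, the classical theory of reductive groups (see \cite{CPS}) shows that $(\Rep G,\le)$ is a lower finite highest weight category. This settles existence.

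The key remaining input is a simple preserving anti-autoequivalence. I would use the standard contravariant duality $M\mapsto M^\tau$: one fixes the anti-automorphism $\tau$ of $G$ acting as the identity on the maximal torus $T$ and interchanging the opposite root subgroups (the transpose anti-automorphism on a Chevalley basis), and sets $M^\tau$ to be the linear dual of $M$ with $G$-action twisted through $\tau$. This functor is contravariant and involutive up to natural isomorphism, hence an anti-autoequivalence, and it satisfies $L(\lambda)^\tau\cong L(\lambda)$ for every $\lambda$, because $\tau$ fixes $T$ and swaps $B$ with $B^-$, so the highest weight is preserved. With the three hypotheses in place --- finite length, existence of a lower finite highest weight structure, and the duality --- Theorem~\ref{ThmUniq} shows that $\Rep G$ admits at most one highest weight structure up to equivalence, and combined with existence this yields exactly one.

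Since this is essentially an application of Theorem~\ref{ThmUniq}, I do not anticipate a genuine obstacle. The only points demanding care are citing the correct classical statements for the existence of the Weyl/induced-module highest weight structure and for the self-duality $L(\lambda)^\tau\cong L(\lambda)$ of the simple modules; both are standard facts in the representation theory of reductive groups, so the work is confined to assembling them into the hypotheses of Theorem~\ref{ThmUniq}.
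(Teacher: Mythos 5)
Your proposal is correct and follows essentially the same route as the paper: exhibit the classical lower finite highest weight structure (induced modules as costandards, Weyl modules as standards, for a lower finite order on dominant weights) and the simple-preserving anti-autoequivalence coming from the transpose anti-automorphism $\tau$, then invoke Theorem~\ref{ThmUniq}. The only cosmetic difference is that you use the dominance order where the paper uses the restriction of Jantzen's order $\uparrow$; by Lemma~\ref{Lempext} these give equivalent structures, so nothing is affected.
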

\begin{proof}
As we will review below, it is well-known that every choice of maximal torus and positive root system yields a highest weight structure on $\Rep G$, see also \cite[Chapter II]{Jantzen}.
However, by \cite[Corollary~II.1.16 and \S II.2.12]{Jantzen}, every choice of maximal torus also yields an antiautomorphism $\tau:G\to G$ which equips $\Rep G$ with an involutive contravariant endofunctor which preserves simple representations. Uniqueness therefore follows from Theorem~\ref{ThmUniq}.
\end{proof}

\subsubsection{} Fix  a maximal torus $T$ of $G$, with the corresponding root system $R$ and character group $X(T)$. For a choice $R^+\subset R$ of positive system we have the dominant weights $\Lambda:=X(T)_+$, which label the simple modules in $\Rep G$ by \cite[\S II.2]{Jantzen}. For any $\lambda\in\Lambda$ we have the induced module $H^0(\lambda)={\mathrm{ind}}^G_B k_\lambda$ of \cite[\S II.2.1]{Jantzen} with simple socle $L(\lambda)$.  We denote by $\preceq$ the partial order on $\Lambda$ which is the restriction of the partial order $\uparrow$ on $X(T)$ of \cite[Ch II.6]{Jantzen}. It follows from \cite[Proposition II.6.20]{Jantzen} that $H^0(\lambda)$ is the injective hull of $L(\lambda)$ in $(\Rep G)_{\Omega}$ for any ideal $\Omega\subset\Lambda$ in which $\lambda$ is maximal. The Weyl module $V(\lambda):={}^\tau H^0(\lambda)$ (with $M\mapsto {}^\tau M$ the anti-equivalence of $\Rep G$ from \cite[\S II.2.12]{Jantzen}) is therefore the projective cover of $L(\lambda)$ in $(\Rep G)_{\preceq\lambda}$. That $(\Rep G,\preceq)$ is a lfhw category now follows from \cite[Proposition~II.4.13]{Jantzen} and the $\dagger$-version of Definition~\ref{DefHWC}.


\subsubsection{The essential order}

It is well-known that the essential order $\preceq^e$ on $X(T)_+$ of $\Rep G$ is different from (is coarser than) the partial order $\preceq$ described above, since the blocks of $\Rep G$ do not correspond to affine Weyl group orbits, see \cite[\S II.7.2]{Jantzen}. On the other hand, as we explain below, it is known that on the principal block $\preceq$ restricts to the essential order.

Following \cite{Jantzen} we denote the affine Weyl group by $W_p$, the finite Weyl group by $W<W_p$ and the $\rho$-shifted action of $w\in W_p$ on $\lambda\in X(T)$ by $w\bullet \lambda$. As the notation suggests, we view $W_p$ as a group of automorphisms of $X(T)$ such that the embedding $W_p<\mathrm{Aut}(X(T))$ depends explicitly on $p$. We also have a canonical realisation of $W_p$ as a Coxeter group with parabolic subgroup $W$.
We will always identify the quotient $W\backslash W_p$ with the corresponding subset of $W_p$ of shortest coset representatives Then we have a bijection
$$W\backslash W_p \,\to \, X(T)_+\cap W_p\bullet 0,  \quad w\mapsto w\bullet 0.$$
We denote the Bruhat order on $W\backslash W_p$, see \cite[\S 2.5]{BB}, by $\le_B$.
\begin{lemma}\label{LemJan}
For $x,y\in W\backslash W_p$ we have the following implications.
$$x\le_B y\quad\Rightarrow\quad x\bullet 0 \preceq^e y\bullet 0\quad\Rightarrow\quad x\bullet 0\uparrow y\bullet0.$$
\end{lemma}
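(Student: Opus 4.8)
The two implications are of quite different nature, so the plan is to prove them separately. The second, $x\bullet0\preceq^e y\bullet0\Rightarrow x\bullet0\uparrow y\bullet0$, is formal. Here $(\Rep G,\preceq)$ is a lfhw category whose defining order $\preceq$ is by construction the restriction of $\uparrow$ to $\Lambda=X(T)_+$, and for any lfhw category the defining order extends the essential order (as observed right after Definition~\ref{Defess}). Hence $\preceq^e\subseteq\,\preceq\,=\,\uparrow|_{\Lambda}$, which is exactly the desired implication.

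The real content lies in the first implication $x\le_B y\Rightarrow x\bullet0\preceq^e y\bullet0$. Since $\preceq^e$ is transitive and the Bruhat order on $W\backslash W_p$ is generated by its covering relations, it is enough to treat a single cover $x\lessdot_B y$ and then chain. For such a cover one has $\ell(y)=\ell(x)+1$ and $y=s_{\beta,mp}\,x$ for a reflection of $W_p$; translating through $w\mapsto w\bullet0$ and using the alcove geometry of \cite[\S II.6]{Jantzen} and \cite{BB}, this forces the alcoves of $x\bullet0$ and $y\bullet0$ to be adjacent, interchanged by the reflection in a wall $H_{\beta,mp}$ that they share, with $x\bullet0\uparrow y\bullet0$. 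In particular $x\bullet0$ is an $\uparrow$-cover of $y\bullet0$, i.e. maximal among the weights lying strictly below $y\bullet0$.

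I would then feed $y\bullet0$ into Jantzen's sum formula for the Weyl module $V(y\bullet0)=\Delta(y\bullet0)$ (see \cite[\S II.8]{Jantzen}). Its right-hand side is a combination, with strictly positive $p$-adic-valuation coefficients, of Euler characteristics $\chi(s_{\gamma,np}\bullet(y\bullet0))$ whose arguments satisfy $s_{\gamma,np}\bullet(y\bullet0)\uparrow y\bullet0$. Rewriting both sides in the basis of simple characters $\mathrm{ch}\,L(\mu)$, and using that $x\bullet0$ is maximal strictly below $y\bullet0$, the coefficient of $\mathrm{ch}\,L(x\bullet0)$ on the left equals the coefficient of $\chi(x\bullet0)$ on the right, which is strictly positive. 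Hence $[V(y\bullet0):L(x\bullet0)]\neq0$, and Definition~\ref{Defess} yields $x\bullet0\preceq^e y\bullet0$, as required.

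The hardest step is the non-cancellation at this top layer: several reflections $s_{\gamma,np}$ could a priori contribute Euler characteristics whose dominant straightenings coincide with, or are signed opposites of, $\chi(x\bullet0)$, and one must check that their total coefficient remains positive. This is exactly where the affine Weyl combinatorics is essential, via the fact that an $\uparrow$-cover of $y\bullet0$ is produced by a unique wall-reflection, so precisely one term of the sum formula survives at that weight. A secondary point to address is that the bijection $W\backslash W_p\cong X(T)_+\cap W_p\bullet0$ and the regular form of the sum formula require $0$ to be $p$-regular (the standard principal-block hypothesis $p\ge h$); the singular case can be reduced to the regular one through the translation principle of \cite[\S II.7]{Jantzen}.
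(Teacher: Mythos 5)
Your proposal is correct and follows essentially the same route as the paper: the second implication is formal since $\preceq^{e}$ refines $\preceq\,=\,\uparrow|_{\Lambda}$, the first is reduced to Bruhat covers via the chain property, and for a cover one shows $x\bullet 0$ and $y\bullet 0$ are adjacent in the $\uparrow$-order and deduces $[\nabla(y\bullet 0):L(x\bullet 0)]\neq 0$. The only difference is that where you re-derive this multiplicity (and the non-cancellation in the sum formula) by hand, the paper simply cites Jantzen's Corollaries II.6.10 and II.6.24, which package exactly that argument.
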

\begin{proof}
The second implication follows by definition, so we prove the first one. By the chain property in \cite[Theorem~2.5.5]{BB}, it suffices to prove the special case where the difference in length $\ell(y)-\ell(x)$ is $1$.
Recall the function $d:W_p\to\mZ$ from \cite[\S 6.6]{Jantzen}. A reformulation of an observation made {\it loc. cit.} is that $\ell(x)=d(x)$ for $x\in W\backslash W_p$. It then follows from \cite[6.6(2)]{Jantzen} that $x\bullet 0\uparrow y\bullet 0$. Furthermore, \cite[Corollary~6.10]{Jantzen} implies that, for any $\nu\in X(T)$ with $x\bullet 0\uparrow \nu\uparrow y\bullet 0$, we have $\nu\in\{x\bullet 0,y\bullet 0\}$. It then follows from \cite[Corollary 6.24]{Jantzen} that $[\nabla(y\bullet 0):L(x\bullet0)]=1$, so in particular $x\bullet 0 \preceq^e y\bullet 0$.
\end{proof}

A priori, for $x,y \in W_p$, the relation $x\bullet 0\uparrow y\bullet 0$ depends on $p$. Assume that $p>h$, with $h$ the Coxeter number of \cite[6.2(9)]{Jantzen}.
Then it is known, see \cite[Lemma~10.1]{AR} and references therein, that the Bruhat order on $W\backslash W_p$ and $\uparrow$ coincide. Together with Lemma~\ref{LemJan} this allows us to conclude the folllowing.

\begin{corollary}
Assume that $p>h$. The essential order on $X(T)_+\cap W_p\bullet0$ of $(\Rep G)_0$ is given as follows.
For $x,y\in W\backslash W_p$ we have
$$x\le_B y\quad\Leftrightarrow\quad x\bullet 0 \preceq^e y\bullet 0\quad\Leftrightarrow\quad x\bullet 0\uparrow y\bullet0.$$
\end{corollary}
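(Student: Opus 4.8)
The plan is to observe that the corollary is a purely formal consequence of Lemma~\ref{LemJan} together with the coincidence of the Bruhat order and $\uparrow$ recalled immediately above, so that no new geometric or representation-theoretic input is required at this stage. Abbreviate the three conditions as (A) $x \le_B y$, (B) $x \bullet 0 \preceq^e y \bullet 0$, and (C) $x \bullet 0 \uparrow y \bullet 0$. Lemma~\ref{LemJan} already supplies the two implications $(A) \Rightarrow (B) \Rightarrow (C)$, and these hold for every $p$.

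To close the loop I would invoke the hypothesis $p > h$. Under this assumption, \cite[Lemma~10.1]{AR} and the references cited there identify the Bruhat order on $W \backslash W_p$ with the order $\uparrow$ restricted to $X(T)_+ \cap W_p \bullet 0$ via the bijection $w \mapsto w \bullet 0$; in particular this yields the one missing implication $(C) \Rightarrow (A)$. Chaining this with Lemma~\ref{LemJan} produces the cycle $(A) \Rightarrow (B) \Rightarrow (C) \Rightarrow (A)$, which forces (A), (B) and (C) to be pairwise equivalent. That is exactly the asserted chain of biconditionals, so the corollary follows at once.

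The hard part, such as it is, lies entirely outside this wrapping argument: it is the content of Lemma~\ref{LemJan}, whose nontrivial direction $(A) \Rightarrow (B)$ rests on the multiplicity computation $[\nabla(y \bullet 0) : L(x \bullet 0)] = 1$ extracted from \cite[Corollaries~6.10 and~6.24]{Jantzen}, together with the external input $(C) \Rightarrow (A)$ from \cite{AR}. The only point requiring direct care here is bookkeeping: one must check that all three orders are being compared through the \emph{same} bijection $w \mapsto w \bullet 0$, so that (A), (B), (C) are evaluated on matching elements. Once this compatibility is confirmed, the statement follows with no further computation.
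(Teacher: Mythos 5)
Your argument is correct and is exactly the paper's: Lemma~\ref{LemJan} supplies $x\le_B y\Rightarrow x\bullet 0\preceq^e y\bullet 0\Rightarrow x\bullet 0\uparrow y\bullet 0$, and the cited coincidence of $\le_B$ and $\uparrow$ for $p>h$ closes the cycle. Nothing further is needed.
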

\subsection{Abelian Deligne category} Set $k=\mC$ and fix an arbitrary $t\in\mZ$. In \cite{EHS} an `abelian envelope' $\mathcal{V}_t$ of Deligne's universal symmetric monoidal category on a dualisable object $V_t$ of dimension $t$ was constructed. Using the universality in \cite[Theorem~2]{EHS}, it follows that we have a tensor functor $\cV_t\to \cV_t$ which sends $V_t$ to its (monoidal) dual $V_t^\ast$. The universality also shows that this functor is involutive. The composition of this functor with the monoidal duality $X\mapsto X^\ast$ is thus an anti-autoequivalence. It follows easily that this anti-autoequivalence preserves simple objects. Consequently, the lfhw structure on $\cV_t$ as defined in \cite[\S 8.5]{EHS} is the unique one which exists on $\cV_t$.

\subsection*{Acknowledgement}
The research was supported by ARC grant DE170100623. The author thanks Will Hardesty and Geordie Wlliamson for interesting discussions, Simon Riche for pointing out that Theorem~\ref{ThmInd}(1) is known and contained in \cite{KS} and Pramod Achar for posing the question addressed in Section~\ref{SecCPS}.

\end{document}